\newtheorem{lemma}{Lemma}[section]
\newtheorem{thm}{Theorem}[section]
\newtheorem{remark}{Remark}[section]
\def\text#1{\mbox{\rm #1}}
\def\T{\mathcal{T}}
\def\vec{\text{vec}}
\def\T{{ \mathrm{\scriptscriptstyle T} }}
\def\mn{{ \mathcal{MN} }}
\def\o{{ \mathcal{O} }}
\def\so{{ \mathcal{SO} }}
\def\P{{ \mathcal{P} }}
\newcommand{\argmin}{\mathop{\rm argmin}}
\newcommand{\argmax}{\mathop{\rm argmax}}
\newcommand{\wh}{\widehat}
\newcommand{\wt}{\widetilde}
\newcommand{\fnorm}[1]{\|#1\|_{\rm F}}
\newcommand{\opnorm}[1]{\|#1\|_{\rm op}}
\newcommand{\Tr}{\mathop{\sf Tr}}
\newcommand{\supp}{{\rm supp}}
\newcommand{\iprod}[2]{\left \langle #1, #2 \right\rangle}
\newtheorem*{condb'}{Condition B'}
\newcommand{\br}[1]{\left( #1 \right)}
\newcommand{\mathr}{\mathbb{R}}
\newcommand{\im}{{\rm Im}}
\newcommand{\re}{{\rm Re}}
\newcommand{\diff}{{\rm d}}
\title{Optimal Orthogonal Group Synchronization and Rotation Group Synchronization
}
\author[1]{Chao Gao}
\author[2]{Anderson Y. Zhang}
\affil[1]{
University of Chicago
}
\affil[2]{
University of Pennsylvania
}
\begin{document}
\maketitle

\begin{abstract}
We study the statistical estimation problem of orthogonal group synchronization and rotation group synchronization. The model is $Y_{ij} = Z_i^* Z_j^{*T} + \sigma W_{ij}\in\mathr^{d\times d}$ where $W_{ij}$  is a Gaussian random matrix and $Z_i^*$ is either an orthogonal matrix  or a rotation matrix, and each $Y_{ij}$ is observed independently with probability $p$. We analyze an iterative polar decomposition algorithm for the estimation of $Z^*$ and show it has an error of $(1+o(1))\frac{\sigma^2 d(d-1)}{2np}$ when initialized by spectral methods. A matching minimax lower bound is further established which leads to the optimality of the proposed algorithm as it achieves the exact minimax risk.

\smallskip

\end{abstract}


\section{Introduction}\label{sec:intro}

Consider
\begin{equation}
Y_{ij}=Z_i^*Z_j^{*\T}+\sigma W_{ij}\in\mathbb{R}^{d\times d}, \label{eq:od-model}
\end{equation}
for all $1\leq i<j\leq n$. We assume
\begin{equation}
Z_i^*\in\o(d)=\{U\in\mathbb{R}^{d\times d}:UU^{\T}=U^{\T}U=I_d\}, \label{eq:od-setting}
\end{equation}
for all $i\in[n]$ and $W_{ij}\sim \mathcal{MN}(0,I_d,I_d)$ independently for all $1\leq i<j\leq n$.\footnote{A random matrix $X$ follows a matrix Gaussian distribution $\mathcal{MN}(M,\Sigma,\Omega)$ if its density function is proportional to $\exp\left(-\frac{1}{2}\Tr\left(\Omega^{-1}(X-M)^{\T}\Sigma^{-1}(X-M)\right)\right)$.} Our goal is to estimate the orthogonal matrices $Z_1^*,\cdots,Z_n^*$. This problem is known as orthogonal group synchronization, or $\o(d)$ synchronization. In addition to (\ref{eq:od-setting}), we also consider a closely related setting that
\begin{equation}
Z_i^*\in\so(d)=\{U\in\o(d):\det(U)=1\},\label{eq:sod-setting}
\end{equation}
for all $i\in[n]$. This is known as rotation group synchronization, or $\so(d)$ synchronization. Both $\o(d)$ and $\so(d)$ synchronizations have found successful applications across a wide range of areas including structural biology, computational genomics, robotics, computer vision and distributed networks. For example, synchronization over $\o(d)$ has been applied to the sensor network localization problem \citep{cucuringu2012sensor}. The problem over $\o(3)$ can be used to solve the graph realization problem \citep{cucuringu2012eigenvector}, and that over $\so(3)$ plays a central role in cryo-electron microscopy \citep{singer2011three,shkolnisky2012viewing} and global motion estimation \citep{arie2012global}.

Despite a growing literature in application and methodology, theoretical understandings of synchronization over $\o(d)$ or $\so(d)$ have not been thoroughly explored. In particular, the exact minimax estimation of the $\o(d)$ synchronization under the model (\ref{eq:od-model}) still remains an important open problem. In this paper, we carefully characterize the minimax risk with respect to the following loss function,
\begin{equation}
\ell(Z,Z^*)=\min_{B\in\o(d)}\frac{1}{n}\sum_{i=1}^n\fnorm{Z_i-Z_i^*B}^2, \label{eq:loss-od-def}
\end{equation}
defined for all $Z,Z^*\in\o(d)^n$. Note that the minimization over $B\in\o(d)$ is necessary, since multiplying every $Z_i^*$ by some $B\in\o(d)$ does not change the distribution of the observations. Our result is obtained under a setting that allows the possibility of missing interactions. Instead of observing $Y_{ij}$ for all $1\leq i<j\leq n$, we assume that each $Y_{ij}$ is observed with probability $p$. In other words, we only observe (\ref{eq:od-model}) on a random graph that $A_{ij}\sim\text{Bernoulli}(p)$ independently for all $1\leq i<j\leq n$. We summarize the main result of the paper on the $\o(d)$ synchronization as the following theorem.

\begin{thm}\label{thm:main}
For the $\o(d)$ Synchronization (\ref{eq:od-setting}), assume $\frac{np}{\sigma^2}\rightarrow\infty$, $\frac{np}{\log n}\rightarrow\infty$ and $2\leq d=O(1)$. Then, there exists some $\delta=o(1)$ such that
\begin{equation}
\inf_{\wh{Z}\in\o(d)^n}\sup_{Z\in\o(d)^n}\mathbb{E}_Z\ell(\wh{Z},Z)\geq (1-\delta)\frac{\sigma^2d(d-1)}{2np}.
\end{equation}
Moreover, the algorithm $\wh{Z}$ described in Section \ref{sec:od-upper} satisfies
\begin{equation}
\ell(\wh{Z},Z^*)\leq (1+\delta)\frac{\sigma^2d(d-1)}{2np}, \label{eq:intro-upper}
\end{equation}
with probability at least $1-n^{-8}-\exp\left(-\left(\frac{np}{\sigma^2}\right)^{1/4}\right)$.
\end{thm}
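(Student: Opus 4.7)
The plan is to prove the two bounds separately: a minimax lower bound via a per-index reduction to a Gaussian estimation problem on $\o(d)$, and a matching upper bound obtained by analyzing one step of the iterative polar-decomposition update on top of a spectral initialization.

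For the lower bound, I would use a Bayesian reduction with an i.i.d.\ prior on the $Z_i^*$'s concentrated in a shrinking skew-symmetric neighborhood of $I_d$, say $Z_i^* = \exp(\epsilon_n S_i)$ with $S_i$ i.i.d.\ on the skew-symmetric unit sphere and $\epsilon_n\to 0$ slowly. Under this prior the alignment minimizer $B^*$ in (\ref{eq:loss-od-def}) concentrates at $I_d$, so with probability $1-o(1)$ one has $\ell(\wh Z,Z^*)=(1+o(1))\frac{1}{n}\sum_{i=1}^n\fnorm{\wh Z_i-Z_i^*}^2$, and it suffices to lower bound $\Expect\fnorm{\wh Z_i-Z_i^*}^2$ per index. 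Conditioning on $\{Z_j^*\}_{j\ne i}$ and on the graph $A$, orthogonal invariance of the Gaussian noise reduces the sufficient statistic for $Z_i^*$ to
\[
\bar Y_i = \frac{1}{N_i}\sum_{j\ne i} A_{ij} Y_{ij} Z_j^* = Z_i^* + \frac{\sigma}{\sqrt{N_i}}\,\xi_i,\qquad \xi_i\sim\mn(0,I_d,I_d),
\]
where $N_i=\sum_{j\ne i}A_{ij}=(1+o(1))np$ uniformly under $np/\log n\to\infty$. Each per-index problem is thus Gaussian location on the $d(d-1)/2$-dimensional manifold $\o(d)$, and a Van~Trees / Hájek--Le~Cam local asymptotic minimax argument on the tangent space of skew-symmetric matrices yields $\Expect\fnorm{\wh Z_i-Z_i^*}^2\ge(1-o(1))\sigma^2 d(d-1)/(2N_i)$. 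Averaging over $i$ then produces the stated bound.

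For the upper bound, I would analyze the polar-decomposition iteration
\[
\wh Z_i^{(t+1)} = \P\Bigl(\sum_{j\ne i} A_{ij} Y_{ij}\,\wh Z_j^{(t)}\Bigr),
\]
starting from the spectral estimator of Section~\ref{sec:od-upper}, which standard perturbation bounds show satisfies $\ell(\wh Z^{(0)},Z^*)=o(1)$ under the stated signal-to-noise condition. Substituting the model gives
\[
\sum_{j\ne i} A_{ij} Y_{ij}\,\wh Z_j^{(t)} = Z_i^*\!\sum_{j\ne i} A_{ij} Z_j^{*\T}\wh Z_j^{(t)} + \sigma\!\sum_{j\ne i} A_{ij} W_{ij}\,\wh Z_j^{(t)},
\]
where the signal piece is $(1+o(1)) N_i\,Z_i^* B$ for a global alignment $B\in\o(d)$ once the iterates are close to $Z^*$. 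If the noise piece can be replaced by $\sigma\sqrt{N_i}\,G_i$ with $G_i\sim\mn(0,I_d,I_d)$ up to $o(1)$ relative error, the first-order polar perturbation expansion
\[
\P\bigl(N_i Z_i^* B + \sigma\sqrt{N_i}\,G_i\bigr) - Z_i^* B = \frac{\sigma}{\sqrt{N_i}}\,\Pi_i(G_i) + O\Bigl(\tfrac{\sigma^2}{N_i}\Bigr),
\]
with $\Pi_i$ the projection onto the tangent space $T_{Z_i^*B}\o(d)$, combined with $\Expect\fnorm{\Pi_i(G_i)}^2 = d(d-1)/2$, yields $\fnorm{\wh Z_i^{(t+1)}-Z_i^*B}^2 = (1+o(1))\sigma^2 d(d-1)/(2N_i)$ in expectation; averaging over $i$ with $N_i=(1+o(1))np$ gives $(1+\delta)\sigma^2 d(d-1)/(2np)$, which a concentration argument then upgrades from expectation to high probability.

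The hard part will be justifying the replacement of $\sigma\sum_{j\ne i} A_{ij} W_{ij}\,\wh Z_j^{(t)}$ by $\sigma\sqrt{N_i}\,G_i$ with the \emph{exact} independent Gaussian structure, since a naive matrix Bernstein bound captures the right rate but loses the sharp constant $d(d-1)/2$. My plan here is a leave-one-out construction: for each $i$, run a parallel iteration $\wh Z^{(t),(-i)}$ that ignores the $i$th row and column of $W$ and $A$; use the contraction of the polar update together with the $o(1)$ initialization to show $\fnorm{\wh Z_j^{(t)}-\wh Z_j^{(t),(-i)} B_i} = O(\sigma/\sqrt{np})$ uniformly in $j$ and $t$ for some $B_i\in\o(d)$; then substitute $\wh Z_j^{(t),(-i)} B_i$ for $\wh Z_j^{(t)}$ in the noise sum, where independence of $\wh Z^{(t),(-i)}$ from $\{W_{ij}\}_j$ combined with orthogonal invariance of the Gaussian noise reduces the sum to $\sqrt{N_i}\,G_i$ in distribution. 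Secondary technical ingredients include uniform degree concentration $\max_i|N_i/(np)-1|=o(1)$ (Bernstein under $np/\log n\to\infty$), an operator-norm bound on $\sum_j A_{ij}W_{ij}M_j$ for deterministic orthogonal $M_j$, and a second-order polar perturbation expansion certifying that the $O(\sigma^2/N_i)$ remainder is negligible under $np/\sigma^2\to\infty$.
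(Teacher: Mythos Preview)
Your lower bound has a real gap in how you dispose of the alignment $B$ in $\ell(\wh Z,Z^*)=\min_{B}\frac{1}{n}\sum_i\fnorm{\wh Z_i-Z_i^*B}^2$. You assert that under a prior concentrated near $I_d$ the minimizer $B^*$ concentrates at $I_d$, but $B^*$ depends on the estimator $\wh Z$, which is arbitrary at the point where you take the infimum: for $\wh Z_i=Z_i^*R$ one has $B^*=R$ regardless of the prior, so you cannot replace $\ell(\wh Z,Z^*)$ by $\frac{1}{n}\sum_i\fnorm{\wh Z_i-Z_i^*}^2$ uniformly over $\wh Z$. Even restricting to the Bayes-optimal $\wh Z$, the alignment absorbs the empirical mean of the per-index errors, and turning ``this costs only $O(1/n)$'' into a rigorous lower bound is not automatic. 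The paper sidesteps the issue by first proving
\[
\ell(\wh Z,Z^*)\;\geq\;\frac{1}{2n^2}\sum_{i,j}\fnorm{\wh Z_i\wh Z_j^{\T}-Z_i^*Z_j^{*\T}}^2,
\]
which is invariant under $Z^*\mapsto Z^*B$, and then applies van~Trees to the estimation of each pairwise functional $Z_i^*Z_j^{*\T}$ (with a carefully constructed smooth chart $Q(r)$ of $\so(d)$ near $I_d$). The per-pair bound $\sigma^2 d(d-1)/(np)$ together with the $\frac{1}{2}$ prefactor and the $n(n-1)$ pairs recovers the constant.

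Your upper bound would work, but the leave-one-out decoupling is unnecessary and is exactly what the paper avoids. Rather than making $\wh Z_j^{(t)}$ independent of $W_{ij}$, the paper simply writes
\[
\sum_{j\neq i}A_{ij}W_{ij}\wh Z_j^{(t)}=\sum_{j\neq i}A_{ij}W_{ij}Z_j^*B+\sum_{j\neq i}A_{ij}W_{ij}\bigl(\wh Z_j^{(t)}-Z_j^*B\bigr).
\]
Since $Z_j^*$ is deterministic and $W_{ij}Z_j^*\overset{d}{=}W_{ij}$, the first sum is already the ideal Gaussian; the second sum is controlled by $\opnorm{(A\otimes\mathds{1}_d\mathds{1}_d^{\T})\circ W}\cdot\sqrt{n\,\ell(\wh Z^{(t)},Z^*)}$, which is $o(1)$ relative to the main term by the induction hypothesis. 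This yields a one-step contraction $\ell(f(Z),Z^*)\le\delta_1\ell(Z,Z^*)+(1+\delta_2)\frac{\sigma^2 d(d-1)}{2np}$ directly, with no parallel sequences to track. The sharp $d(d-1)/2$ then comes not from the noise law but from the polar step: the paper uses the identity $\mathcal{P}(M)=I_d$ for symmetric positive-definite $M$ to write
\[
\fnorm{\mathcal{P}(I_d+E)-I_d}=\fnorm{\mathcal{P}(I_d+E)-\mathcal{P}\bigl(I_d+\tfrac12(E+E^{\T})\bigr)}\le(1-\opnorm{E})^{-1}\fnorm{\tfrac12(E-E^{\T})},
\]
which is precisely your tangent projection $\Pi_i$ onto skew-symmetric matrices, but implemented through the Li perturbation bound (Lemma~\ref{lem:li}) rather than a Taylor expansion with remainder.
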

Though the result of Theorem \ref{thm:main} is stated in asymptotic forms, non-asymptotic versions under the assumptions $\frac{np}{\sigma^2}\geq c_1$ and $\frac{np}{\log n}\geq c_2$ for some sufficiently large constants $c_1,c_2>0$ are also presented in the paper, with the exact form of $\delta$ explicitly given in Theorem \ref{thm:upper-od} and Theorem \ref{thm:lower-od}.
The high-probability upper bound (\ref{eq:intro-upper}) immediately implies an in-expectation upper bound given the boundedness of the loss function that $\ell(Z,Z^*)\leq 4d$ for all $Z,Z^*\in\o(d)^n$. Since $\exp\left(-\left(\frac{np}{\sigma^2}\right)^{1/4}\right)=o\left(\frac{\sigma^2d(d-1)}{2np}\right)$, we have
\begin{equation}
\sup_{Z\in\o(d)^n}\mathbb{E}_Z\ell(\wh{Z},Z)\leq (1+\delta)\frac{\sigma^2d(d-1)}{2np} + \frac{4d}{n^8}, \label{eq:exp-upper}
\end{equation}
for some $\delta=o(1)$. According to the proof of Theorem \ref{thm:main}, the $4dn^{-8}$ in (\ref{eq:exp-upper}) can actually be improved to $4dn^{-C}$ for any constant $C>0$. Therefore, if we additionally assume that $\sigma^2/p\geq n^{-c}$ for some constant $c>0$, we will have
$$(1-\delta)\frac{\sigma^2d(d-1)}{2np}\leq \inf_{\wh{Z}\in\o(d)^n}\sup_{Z\in\o(d)^n}\mathbb{E}_Z\ell(\wh{Z},Z)\leq (1+\delta)\frac{\sigma^2d(d-1)}{2np},$$
for some $\delta=o(1)$. Hence, $\frac{\sigma^2d(d-1)}{2np}$ is the exact asymptotic minimax risk for $\o(d)$ synchronization. We remark that $\frac{\sigma^2d(d-1)}{2np}$ is intuitive to understand, since $\sigma^2$ is the noise level, $np$ is the effective sample size, and $\frac{d(d-1)}{2}$ is the degrees of freedom of $\o(d)$. In additional to Theorem \ref{thm:main}, we also obtain a very similar result for the minimax risk of $\so(d)$ synchronization. See Theorem \ref{thm:upper-sod} and Theorem \ref{thm:lower-sod} for the exact statement.

To achieve the minimax optimality, we consider an iterative polar decomposition procedure that projects the matrix $\sum_{j\in[n]\backslash\{i\}}A_{ij}Y_{ij}Z_j^{(t-1)}$ to the space $\o(d)$ at the $t$th iteration. This algorithm can be viewed as an approximation to the maximum likelihood estimator, and is known under other names such as generalized power method \citep{boumal2016nonconvex,gao2019iterative,liu2020unified,ling2020improved} and projected power method \citep{chen2018projected} in the literature. We establish a sharp statistical error bound for the evolution of the algorithm, and shows that the error decays exponentially to the optimal $\frac{\sigma^2d(d-1)}{2np}$ as long as the algorithm is initialized by a spectral method \citep{arie2012global,singer2011three}. Our lower bound analysis is a careful application of the celebrated van Trees' inequality \citep{gill1995applications}. It complements the Cram{\'e}r--Rao lower bound derived by \cite{boumal2014cramer} for the set of unbiased estimators.

Let us give some very brief comments on the assumptions of Theorem \ref{thm:main}. Our paper is focused on the setting where $d$ does not grow with the sample size $n$. This covers the most interesting applications in the literature for $d=3$, though an extension of our result to a growing $d$ would also be theoretically interesting. We exclude the case $d=1$, because $\so(1)$ is a degenerate set, and the problem over $\o(1)=\{-1,1\}$ is known as $\mathbb{Z}_2$ synchronization, whose minimax rate has already been derived in the literature \citep{fei2020achieving,gao2021sdp}. It is interesting to note that the minimax rate of $\mathbb{Z}_2$ synchronization is exponential instead of the polynomial rate of $\o(d)$ synchronization for $d\geq 2$. When $d=O(1)$, the condition $\frac{np}{\sigma^2}\rightarrow\infty$ is equivalent to the minimax risk being vanishing. Since the loss function is bounded, the condition $\frac{np}{\sigma^2}\rightarrow\infty$ can also be viewed as necessary for the minimax risk to have a nontrivial rate. We remark that a nontrivial estimation when $\frac{np}{\sigma^2}\asymp 1$ is still possible, but that requires a very different technique of approximate message passing (AMP) \citep{perry2018message} that does not apply to $\frac{np}{\sigma^2}\rightarrow\infty$. Finally, the condition $\frac{np}{\log n}\rightarrow\infty$ guarantees that the random graph is connected with high probability so that synchronization up to a global phase ambiguity is possible.

\paragraph{Related Literature.} One popular method for group synchronization is semi-definite programming (SDP) \citep{arie2012global,singer2011three}. The tightness of SDP and other forms of convex relaxation has been studied by \cite{ling2020solving,fan2021joint,wang2013exact}. In particular, it is shown by \cite{ling2020solving} that SDP is tight for $\o(d)$ synchronization when $\sigma^2\lesssim\sqrt{n}$ in the setting of $p=1$. The papers \cite{arie2012global,singer2011three,romanov2020noise,boumal2013robust} have studied spectral methods and its asymptotic error behavior. In terms of statistical estimation error, \cite{ling2020improved} and \cite{ling2020near} have derived error bounds for the generalized power method and the spectral method for $\ell_{\infty}$-type loss functions in the setting of $p=1$ with a general $d$ that can potentially grow. In particular, both rates are $\frac{\sigma^2\log n}{n}$ when $d$ is bounded by some constant.  For partial observations, the analysis of \cite{liu2020unified} for the generalized power method applied to $p<1$, but they require $p\gtrsim n^{-1/2}$ for a nontrivial result.

When $d=2$, $\so(d)$ synchronization is also known as angular/phase synchronization, and has been extensively studied in the literature \citep{singer2011angular}. The tightness of SDP has been established by \cite{bandeira2017tightness,zhong2018near}. The convergence property and statistical estimation error of the generalized power method are studied by \cite{zhong2018near,liu2017estimation}. The work that is mostly related to us is \cite{gao2020exact} that derives the minimax risk of phase synchronization. The analysis of \cite{gao2020exact} relies critically on the representation of an $\so(2)$ element as a unit complex number. However, as soon as $d\geq 3$, elements of the groups $\o(d)$ and $\so(d)$ are general non-commutative matrices, and the techniques in \cite{gao2020exact} cannot be applied to derive Theorem \ref{thm:main}. See Section \ref{sec:oracle} for a detailed discussion.

\paragraph{Paper Organization.} The rest of the paper is organized as follows. In Section \ref{sec:upper}, we analyze the error decay of the iterative polar decomposition algorithm and the statistical property of the initialization procedure. This leads to the upper bound results for $\o(d)$ and $\so(d)$ synchronizations. The lower bounds are derived in Section \ref{sec:lower}. Finally, Section \ref{sec:pf} collects all the technical proofs of the paper.

\paragraph{Notation.} For $d \in \mathbb{N}$, we write $[d] = \{1,\dotsc,d\}$.  Given $a,b\in\mathbb{R}$, we write $a\vee b=\max(a,b)$ and $a\wedge b=\min(a,b)$. For a set $S$, we use $\mathbb{I}\{S\}$ and $|S|$ to denote its indicator function and cardinality respectively. The notation $\mathds{1}_d$ denotes a vector of all ones.
For a matrix $B =(B_{ij})\in\mathbb{R}^{d_1\times d_2}$, the Frobenius norm, matrix $\ell_{\infty}$ norm, and operator norm of $B$ are defined by $\fnorm{B}=\sqrt{\sum_{i=1}^{d_1}\sum_{j=1}^{d_2}|B_{ij}|^2}$, $\|B\|_{\ell_{\infty}}=\max_{1\leq i\leq d_1}\sum_{j=1}^{d_2}|B_{ij}|$ and $\opnorm{B} = s_{\max}(B)$, and we use $s_{\min}(B)$ and $s_{\max}(B)$ for the smallest and the largest singular values of $B$. For $U,V\in\mathbb{R}^{d_1\times d_2}$, $U\circ V\in\mathbb{R}^{d_1\times d_2}$ is the Hadamard product $U\circ V=(U_{ij}V_{ij})$, and the trace inner product is $\Tr(UV^{\T})=\sum_{i=1}^{d_1}\sum_{j=1}^{d_2}U_{ij}V_{ij}$. The notation $\det(\cdot)$, $\vec(\cdot)$ and $\otimes$ are used for determinant, vectorization, and Kronecker product. For two integers $d_1\geq d_2$, define $\o(d_1,d_2)=\{U\in\mathbb{R}^{d_1\times d_2}:U^{\T}U=I_{d_2}\}$ so that $\o(d)=\o(d,d)$.
The notation $\mathbb{P}$ and $\mathbb{E}$ are generic probability and expectation operators whose distribution is determined from the context.  For two positive sequences $\{a_n\}$ and $\{b_n\}$, $a_n\lesssim b_n$ or $a_n=O(b_n)$ means $a_n\leq Cb_n$ for some constant $C>0$ independent of $n$. We also write $a_n=o(b_n)$ or $\frac{b_n}{a_n}\rightarrow\infty$ when $\limsup_n\frac{a_n}{b_n}=0$.

\section{Optimality of Iterative Polar Decomposition}\label{sec:upper}
In this section, we derive the upper bound parts of the main results. We first investigate  the $\o(d)$ synchronization  in Section \ref{sec:alg} - Section \ref{sec:od-upper} and then extend the results to the $\so(d)$ synchronization in Section  \ref{sec:sod}.

\subsection{The Algorithm}\label{sec:alg}

For a squared matrix $X\in\mathbb{R}^{d\times d}$ that is of full rank, it admits a singular value decomposition (SVD) $X=UDV^{\T}$ with $U,V\in\mathcal{O}(d)$ and $D$ being diagonal. Then, the polar decomposition of $X$ is given by $X=PQ$, where $P=UV^{\T}$ and $Q=VDV^{\T}$. We denote the first factor, which is called generalized phase, by
\begin{equation}
\mathcal{P}(X)=UV^{\T}. \label{eq:polar-svd}
\end{equation}
It is well known \citep{gower2004procrustes} that $\mathcal{P}(X)$ can also be defined by
\begin{equation}
\mathcal{P}(X) = \argmin_{B\in\mathcal{O}(d)}\fnorm{B-X}^2. \label{eq:procrustes}
\end{equation}
The operator $\mathcal{P}(\cdot)$ satisfies the following properties:
\begin{enumerate}
\item For any $c>0$, $\mathcal{P}(X)=\mathcal{P}(cX)$.
\item For any $R\in\mathcal{O}(d)$, $\mathcal{P}(RX)=R\mathcal{P}(X)$ and $\mathcal{P}(XR^{\T})=\mathcal{P}(X)R^{\T}$.
\item Suppose $X=X^{\T}$ and is positive definite, then $\mathcal{P}(X)=I_d$.
\end{enumerate}

The iterative polar decomposition algorithm for the $\o(d)$ synchronization is given by the following iteration,
\begin{equation}
Z_i^{(t)}=\begin{cases}
\mathcal{P}\left(\sum_{j\in[n]\backslash\{i\}}A_{ij}Y_{ij}Z_j^{(t-1)}\right), & \det\left(\sum_{j\in[n]\backslash\{i\}}A_{ij}Y_{ij}Z_j^{(t-1)}\right)\neq 0, \\
Z_i^{(t-1)}, & \det\left(\sum_{j\in[n]\backslash\{i\}}A_{ij}Y_{ij}Z_j^{(t-1)}\right)=0,
\end{cases}\label{eq:os-iter}
\end{equation}
starting from some initialization $\{Z_i^{(0)}\}_{i\in[n]}$. 
To understand (\ref{eq:os-iter}), we can consider the situation where $\{Z_j^*\}_{j\in[n]\backslash\{i\}}$ are known. Then, to estimate $Z_i^*$, one can apply the MLE that solves the following optimization problem,
$$\min_{Z_i\in\mathcal{O}(d)}\sum_{j\in[n]\backslash\{i\}}A_{ij}\fnorm{Y_{ij}-Z_iZ_j^{*T}}^2.$$
With some straightforward arrangement of the objective function and (\ref{eq:procrustes}), the minimum is achieved by $\mathcal{P}\left(\sum_{j\in[n]\backslash\{i\}}A_{ij}Y_{ij}Z_j^{^*}\right)$ as long as $\sum_{j\in[n]\backslash\{i\}}A_{ij}Y_{ij}Z_j^{^*}$ has full rank. Thus, the iteration (\ref{eq:os-iter}) can be thought of as a local MLE step with the unknown $\{Z_j^*\}_{j\in[n]\backslash\{i\}}$ replaced by $\{Z_j^{(t-1)}\}_{j\in[n]\backslash\{i\}}$ from the last step.

\subsection{An Oracle Perturbation Bound}\label{sec:oracle}

In this section, we give an oracle analysis for (\ref{eq:os-iter})  to understand its statistical properties. An iterative algorithm that is similar to (\ref{eq:os-iter}) has been analyzed by \cite{gao2020exact} in the context of phase synchronization. However, the argument used in \cite{gao2020exact} that leads to the correct constant is limited to the phase synchronization and cannot be used for  the $\o(d)$ or $\so(d)$ synchronization. In the following, we first summarize the analysis  in \cite{gao2020exact} for the phase synchronization and then present our new analysis for the $\o(d)$ synchronization to achieve the correct constant.

As we have mentioned in Section \ref{sec:intro}, phase synchronization is $\so(d)$ synchronization with $d=2$. Since a rotation matrix in $\mathbb{R}^2$ is parametrized by an angle, we can equivalently set up the problem via complex numbers. That is, we have $Y_{ij}=z_i^*\bar{z}_j^*+\sigma W_{ij}\in\mathbb{C}$ for $1\leq i<j\leq n$. Each $z_i^*$ is a complex number with norm $1$ and $\bar{z}_j^*$ stands for the complex conjugate of $z_j^*$. The noise variable $W_{ij}$ is standard complex Gaussian. Suppose we have partial observations on a random graph $\{A_{ij}\}_{1\leq i<j\leq n}$, the generalized power method \citep{boumal2016nonconvex, filbir2020recovery, perry2018message} is given by the following iteration,
\begin{equation}
z_i^{(t)}= \begin{cases}
\frac{\sum_{j\in[n]\backslash\{i\}}A_{ij}Y_{ij}z_j^{(t-1)}}{\left|\sum_{j\in[n]\backslash\{i\}}A_{ij}Y_{ij}z_j^{(t-1)}\right|}, & \left|\sum_{j\in[n]\backslash\{i\}}A_{ij}Y_{ij}z_j^{(t-1)}\right| \neq 0, \\
z_i^{(t-1)}, & \left|\sum_{j\in[n]\backslash\{i\}}A_{ij}Y_{ij}z_j^{(t-1)}\right|=0.
\end{cases} \label{eq:glpm}
\end{equation}
It was shown by \cite{gao2020exact} that (\ref{eq:glpm}) achieves the optimal statistical error with a sharp leading constant after sufficient steps of iterations. The key mathematical ingredient in the analysis of \cite{gao2020exact} is the understanding of a one-step iteration error starting from the truth $z^{(t-1)}=z^*$. That is, we define
\begin{equation}
\check{z}_i=\frac{\sum_{j\in[n]\backslash\{i\}}A_{ij}Y_{ij}z_j^{*}}{\left|\sum_{j\in[n]\backslash\{i\}}A_{ij}Y_{ij}z_j^{*}\right|}, \label{eq:gpm-o}
\end{equation}
and our goal is to give a sharp bound for $|\check{z}_i-z_i^*|^2$. We can easily rearrange the right hand side of (\ref{eq:gpm-o}) as $\check{z}_i=z_i^*\frac{1+e_i}{|1+e_i|}$, with $e_i=\frac{\sigma\sum_{j\in[n]\backslash\{i\}}A_{ij}W_{ij}\bar{z}_i^*z_j^*}{\sum_{j\in[n]\backslash\{i\}}A_{ij}}$. Then,
\begin{equation}
|\check{z}_i-z_i^*|^2 = \left|\frac{1+e_i}{|1+e_i|}-1\right|^2 \leq \frac{|\im(e_i)|^2}{|\re(1+e_i)|^2}, \label{eq:critical-fr}
\end{equation}
where the inequality above is by the fact that
\begin{equation}
\left|\frac{x}{|x|}-1\right|\leq \left|\frac{\im(x)}{\re(x)}\right|\quad\text{ for any }x\in\mathbb{C}\text{ such that }\re(x)>0. \label{eq:luanpao}
\end{equation}
For a proof of (\ref{eq:luanpao}), see Lemma 5.6 in \cite{gao2020exact}. Since it can be shown that $e_i$ is small, the denominator of (\ref{eq:critical-fr}) is close to $1$. The numerical of (\ref{eq:critical-fr}) can be accurately controlled by the Gaussianity of $e_i$ conditioning on the random graph $\{A_{ij}\}_{1\leq i<j\leq n}$. To summarize, the first order behavior of $|\check{z}_i-z_i^*|^2$ is determined by $|\im(e_i)|^2$, which leads to the optimal error of phase synchronization with a sharp constant in \cite{gao2020exact}.

The above analysis relies on (\ref{eq:luanpao}) and critically on the representation of an $\so(2)$ element as a unit complex number. Next, we present our new analysis for the $\o(d)$ synchronization.
To understand the statistical property of (\ref{eq:os-iter}) for the $\o(d)$ synchronization, let us similarly consider an oracle setting with $Z_i^{(t-1)}=Z_i^*$ for all $i\in[n]$. Define
\begin{equation}
\check{Z}_i=\mathcal{P}\left(\sum_{j\in[n]\backslash\{i\}}A_{ij}Y_{ij}Z_j^*\right), \label{eq:and}
\end{equation}
and our goal is to bound $\fnorm{\check{Z}_i-Z_i^*}^2$. Compared with (\ref{eq:gpm-o}), the formula (\ref{eq:and}) does not have a closed form anymore, and the inequality (\ref{eq:luanpao}) that only applies to complex numbers does not have a straightforward extension to general orthogonal matrices.

By the property of $\mathcal{P}(\cdot)$, let us first write write
$$\check{Z}_i=\mathcal{P}\left(\frac{\sum_{j\in[n]\backslash\{i\}}A_{ij}Y_{ij}Z_j^*}{\sum_{j\in[n]\backslash\{i\}}A_{ij}}\right)=\mathcal{P}\left(Z_i^*+E_i\right),$$
where the error matrix is given by
$$E_i=\sigma\frac{\sum_{j\in[n]\backslash\{i\}}A_{ij}W_{ij}Z_j^*}{\sum_{j\in[n]\backslash\{i\}}A_{ij}}.$$
Note that $Z_i^*=\mathcal{P}(Z_i^*)$, and thus bounding $\fnorm{\check{Z}_i-Z_i^*}^2$ requires a perturbation analysis of the operator $\mathcal{P}(\cdot)$, which is given by the following lemma.
\begin{lemma}[Theorem 1 of \cite{li1995new}]\label{lem:li}
Let $X,\wt{X}\in\mathbb{R}^{d\times d}$ be two matrices of full rank. Then,
$$\fnorm{\mathcal{P}(X)-\mathcal{P}(\wt{X})}\leq \frac{2}{s_{\min}(X)+s_{\min}(\wt{X})}\fnorm{X-\wt{X}}.$$
\end{lemma}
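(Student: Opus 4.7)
The plan is to establish the bound by producing an inner-product identity that relates $\wt{P} - P$ to $\wt{X} - X$, where $P = \mathcal{P}(X)$ and $\wt{P} = \mathcal{P}(\wt{X})$. I would begin by invoking the polar decompositions $X = PH$ and $\wt{X} = \wt{P}\wt{H}$, with $H = (X^{\T}X)^{1/2}$ and $\wt{H} = (\wt{X}^{\T}\wt{X})^{1/2}$ symmetric positive definite and satisfying $H \succeq s_{\min}(X)\, I_d$ and $\wt{H} \succeq s_{\min}(\wt{X})\, I_d$. Setting $K = P^{\T}\wt{P} \in \mathcal{O}(d)$ and expanding the four cross terms of $\Tr\bigl((\wt{P} - P)^{\T}(\wt{X} - X)\bigr)$, a combined use of $P^{\T}P = \wt{P}^{\T}\wt{P} = I_d$, the cyclic property of the trace, and the symmetry of $H, \wt{H}$ should collapse the expression to the clean identity
\[
  \Tr\bigl((\wt{P} - P)^{\T}(\wt{X} - X)\bigr) \;=\; \Tr\bigl((H + \wt{H})(I_d - K)\bigr).
\]

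The second step exploits symmetry to replace $I_d - K$ by its symmetric part. Since $M := H + \wt{H}$ is symmetric, $\Tr(MA) = \Tr\bigl(M \cdot \tfrac{1}{2}(A + A^{\T})\bigr)$ for every $A$, so the right-hand side equals $\Tr(M(I_d - S))$ with $S = \tfrac{1}{2}(K + K^{\T})$. Because $K$ is orthogonal, its eigenvalues lie on the unit circle, so the spectrum of $S$ lies in $[-1,1]$ and $I_d - S \succeq 0$. Combining $M \succeq (s_{\min}(X) + s_{\min}(\wt{X}))\, I_d$ with the standard trace inequality $\Tr(AB) \geq \lambda_{\min}(A)\Tr(B)$ for symmetric $A, B \succeq 0$, together with the elementary identity $\Tr(I_d - S) = d - \Tr(K) = \tfrac{1}{2}\fnorm{\wt{P} - P}^2$, gives
\[
  \Tr\bigl((\wt{P} - P)^{\T}(\wt{X} - X)\bigr) \;\geq\; \frac{s_{\min}(X) + s_{\min}(\wt{X})}{2}\, \fnorm{\wt{P} - P}^2.
\]

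The final step is a one-line application of Cauchy--Schwarz on the left-hand side, $\Tr((\wt{P} - P)^{\T}(\wt{X} - X)) \leq \fnorm{\wt{P} - P} \cdot \fnorm{\wt{X} - X}$; dividing through by $\fnorm{\wt{P} - P}$ yields the claimed perturbation bound.

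The main obstacle is the algebraic collapse in the first step. It hinges on two facts: that $P$ and $\wt{P}$ are \emph{square} orthogonal, so both $P^{\T}P$ and $\wt{P}^{\T}\wt{P}$ are exactly $I_d$ (and not merely a subspace projection as would be the case for tall Stiefel matrices in $\mathcal{O}(d_1,d_2)$), and that $H, \wt{H}$ are symmetric, which converts $\Tr(HK^{\T})$ into $\Tr(HK)$ through cyclicity. Once that identity is in place, the remaining steps are routine manipulations with positive semidefinite matrices and a single Cauchy--Schwarz.
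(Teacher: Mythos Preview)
Your argument is correct. The paper does not supply its own proof of this lemma; it is quoted verbatim as Theorem~1 of \cite{li1995new} and used as a black box. So there is nothing in the paper to compare against, but your self-contained derivation stands on its own.

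For the record, each step checks out. Expanding $\Tr\bigl((\wt P - P)^{\T}(\wt X - X)\bigr)$ into four terms and using $P^{\T}P=\wt P^{\T}\wt P=I_d$ gives $\Tr(H)+\Tr(\wt H)-\Tr(K^{\T}H)-\Tr(K\wt H)$; the symmetry of $H$ turns $\Tr(K^{\T}H)$ into $\Tr(HK)$, yielding exactly $\Tr\bigl((H+\wt H)(I_d-K)\bigr)$. The replacement of $I_d-K$ by its symmetric part $I_d-S$ is legitimate because $H+\wt H$ is symmetric, and the spectrum of $S=\tfrac12(K+K^{\T})$ lies in $[-1,1]$ since a real orthogonal matrix decomposes into $\pm 1$ blocks and planar rotations. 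The trace inequality $\Tr(AB)\ge \lambda_{\min}(A)\Tr(B)$ for symmetric $A\succeq 0$, $B\succeq 0$ then gives the lower bound, and the identity $\Tr(I_d-S)=d-\Tr(K)=\tfrac12\fnorm{\wt P-P}^2$ together with Cauchy--Schwarz finishes the argument. The only cosmetic point is to dispose of the case $\wt P=P$ separately before dividing by $\fnorm{\wt P-P}$, which is trivial.
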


By Lemma \ref{lem:li}, we have
\begin{equation}
\fnorm{\check{Z}_i-Z_i^*}^2\leq \left(1-\opnorm{E_i}/2\right)^{-2}\fnorm{E_i}^2. \label{eq:naive-perturb}
\end{equation}
Given $\{A_{ij}\}$, the conditional expectation of $\fnorm{E_i}^2$ is $\frac{\sigma^2d^2}{\sum_{j\in[n]\backslash\{i\}}A_{ij}}$. Moreover, $\sum_{j\in[n]\backslash\{i\}}A_{ij}$ concentrates around $(n-1)p$. Therefore, it can be shown that
\begin{equation}
\fnorm{E_i}^2=(1+o_{\mathbb{P}}(1))\frac{\sigma^2d^2}{np}, \label{eq:ppppp}
\end{equation}
under appropriate conditions. When $\frac{\sigma^2d^2}{np}=o(1)$, we have the bound
\begin{equation}
\fnorm{\check{Z}_i-Z_i^*}^2\leq (1+o_{\mathbb{P}}(1))\frac{\sigma^2d^2}{np}. \label{eq:oracle-subopt}
\end{equation}

However, compared with our minimax lower bound in Theorem \ref{thm:lower-od}, it is clear that the error bound $\frac{\sigma^2d^2}{np}$ is not optimal. This is due to a naive application of Lemma \ref{lem:li}. Below, we present an improved analysis of $\fnorm{\check{Z}_i-Z_i^*}^2$. Recall the definition of $E_i$ and the properties of $\mathcal{P}(\cdot)$, and we have
$$\fnorm{\check{Z}_i-Z_i^*}^2 = \fnorm{\mathcal{P}(I_d+E_iZ_i^{*\T})-I_d}^2.$$
The key observation is that we can write $I_d$ as the operator $\mathcal{P}(\cdot)$ applied to any positive definite matrix. With this idea, we have
\begin{equation}
\fnorm{\check{Z}_i-Z_i^*}^2 = \fnorm{\mathcal{P}(I_d+E_iZ_i^{*\T})-\mathcal{P}\left(\text{any positive definite matrix}\right)}^2. \label{eq:anderson}
\end{equation}
We shall choose a positive definite matrix whose difference from $I_d+E_iZ_i^*$ is as small as possible and then apply Lemma \ref{lem:li}. It turns out a correct choice is $I_d+\frac{1}{2}E_iZ_i^*+\frac{1}{2}Z_i^{*\T}E_i^{\T}$. When $\frac{\sigma^2d^2}{np}=o(1)$, we can view $I_d+\frac{1}{2}E_iZ_i^*+\frac{1}{2}Z_i^{*\T}E_i^{\T}$ as a small perturbation from $I_d$ by (\ref{eq:ppppp}), and thus it is positive definite.
Apply Lemma \ref{lem:li}, and we have
\begin{eqnarray*}
\fnorm{\check{Z}_i-Z_i^*}^2 &=& \fnorm{\mathcal{P}(I_d+E_iZ_i^{*\T})-\mathcal{P}\left(I_d+\frac{1}{2}E_iZ_i^*+\frac{1}{2}Z_i^{*\T}E_i^{\T}\right)}^2 \\
&\leq& \left(1-\opnorm{E_i}\right)^{-2}\fnorm{\frac{1}{2}E_iZ_i^{*\T}-\frac{1}{2}Z_i^*E_i^{\T}}^2.
\end{eqnarray*}
Compared with the previous bound (\ref{eq:naive-perturb}), the $\fnorm{E_i}^2$ in (\ref{eq:naive-perturb}) has been improved to $\fnorm{\frac{1}{2}E_iZ_i^{*\T}-\frac{1}{2}Z_i^*E_i^{\T}}^2$. To see why this is an improvement, we have two simple observations:
\begin{enumerate}
\item The diagonal entries of $\frac{1}{2}E_iZ_i^{*\T}-\frac{1}{2}Z_i^*E_i^{\T}$ are all zero, whereas those of $E_i$ are all nonzero.
\item The off-diagonal entries of $\frac{1}{2}E_iZ_i^{*\T}-\frac{1}{2}Z_i^*E_i^{\T}$ have smaller variance. For any $1\leq a<b\leq d$, we have $\left(\frac{1}{2}E_iZ_i^{*\T}-\frac{1}{2}Z_i^*E_i^{\T}\right)_{ab}|A\sim \mathcal{N}\left(0,\frac{\sigma^2}{2\sum_{j\in[n]\backslash\{i\}}A_{ij}}\right)$, compared with $(E_i)_{ab}|A\sim \mathcal{N}\left(0,\frac{\sigma^2}{\sum_{j\in[n]\backslash\{i\}}A_{ij}}\right)$.
\end{enumerate}
By direct calculation, the conditional expectation of $\fnorm{\frac{1}{2}E_iZ_i^{*\T}-\frac{1}{2}Z_i^*E_i^{\T}}^2$ given $\{A_{ij}\}$ is $\frac{\sigma^2d(d-1)}{2\sum_{j\in[n]\backslash\{i\}}A_{ij}}$, and it can be shown that
$$\fnorm{\frac{1}{2}E_iZ_i^{*\T}-\frac{1}{2}Z_i^*E_i^{\T}}^2=(1+o_{\mathbb{P}}(1))\frac{\sigma^2d(d-1)}{2np},$$
under appropriate conditions. This leads to the bound
\begin{equation}
\fnorm{\check{Z}_i-Z_i^*}^2\leq (1+o_{\mathbb{P}}(1))\frac{\sigma^2d(d-1)}{2np}, \label{eq:oracle-opt}
\end{equation}
which is optimal in view of the minimax lower bound in Theorem \ref{thm:lower-od}. Compared with $d^2$ in (\ref{eq:oracle-subopt}), the factor $\frac{1}{2}d(d-1)$ in (\ref{eq:oracle-opt}) is the correct degrees of freedom of the space $\mathcal{O}(d)$.

\subsection{Analysis of the Iteration}\label{sec:ana-ite}

Having understood how a one-step iteration (\ref{eq:os-iter}) would achieve the optimal statistical error bound if it were started from the truth, we are ready to analyze the evolution of (\ref{eq:os-iter}) starting from an initialization that is close to the truth. Let us first shorthand the formula (\ref{eq:os-iter}) by
$$Z^{(t)}=f(Z^{(t-1)}).$$
In other words, we have introduced a map $f:\o(d)^n\rightarrow \o(d)^n$ such that $f(Z^{(t-1)})_i$ is defined by (\ref{eq:os-iter}). We characterize the evolution of the loss function (\ref{eq:loss-od-def}) through the map $f$ by the following lemma.
\begin{lemma}\label{lem:main}
For the $\o(d)$ Synchronization (\ref{eq:od-setting}),
assume $\frac{np}{\sigma^2}\geq c_1$ and $\frac{np}{\log n}\geq c_2$ for some sufficiently large constants $c_1,c_2>0$ and $2\leq d\leq C$ for some constant $C>0$. Then, for any $\gamma\in[0,1/16)$, we have
\begin{eqnarray*}
&& \mathbb{P}\left(\ell(f(Z),Z^*)\leq \delta_1\ell(Z,Z^*)+(1+\delta_2)\frac{\sigma^2d(d-1)}{2np}\text{ for all }Z\in\o(d)^n\text{ such that }\ell(Z,Z^*)\leq\gamma\right) \\
&\geq& 1-n^{-9}-\exp\left(-\br{\frac{ np}{\sigma^2}}^{1/4}\right),
\end{eqnarray*}
where $\delta_1=C_1\sqrt{\frac{\log n+\sigma^2}{np}}$ and $\delta_2=C_2\left(\gamma^2+\frac{\log n+\sigma^2}{np}\right)^{1/4}$ for some constants $C_1,C_2>0$ that only depend on $C$.
\end{lemma}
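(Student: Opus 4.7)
The plan is to parallel the oracle analysis of Section \ref{sec:oracle}, treating the deviation $\Delta_j := Z_j - Z_j^*$ of the current iterate as a perturbation. Since $\ell(\cdot,\cdot)$ minimizes over $B \in \o(d)$ and (\ref{eq:os-iter}) is equivariant under right multiplication by orthogonal matrices, I may WLOG align $Z$ to $Z^*$ so that $\ell(Z,Z^*) = \tfrac{1}{n}\sum_i \fnorm{\Delta_i}^2$. With $d_i := \sum_{j\neq i} A_{ij}$, substituting $Z_j = Z_j^* + \Delta_j$ and (\ref{eq:od-model}) into (\ref{eq:os-iter}) and using property 2 of $\mathcal{P}$, I obtain $f(Z)_i = Z_i^*\,\mathcal{P}(I_d + G_i)$, where $G_i := B_i + Z_i^{*\T}(E_i^{(0)} + E_i^{(1)})$ with $B_i = \frac{1}{d_i}\sum_{j\neq i} A_{ij} Z_j^{*\T}\Delta_j$, $E_i^{(0)} = \frac{\sigma}{d_i}\sum_{j\neq i} A_{ij} W_{ij} Z_j^*$ (the oracle noise of Section \ref{sec:oracle}), and $E_i^{(1)} = \frac{\sigma}{d_i}\sum_{j\neq i} A_{ij} W_{ij}\Delta_j$ a cross term.

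Next, I apply the sharpened polar-perturbation trick of (\ref{eq:anderson}): with $M_i := I_d + \tfrac{1}{2}(G_i + G_i^{\T})$, which is symmetric positive definite with high probability since $\opnorm{G_i} = o_{\Prob}(1)$ under the hypotheses and $\gamma < 1/16$, we have $I_d = \mathcal{P}(M_i)$, and Lemma \ref{lem:li} yields
\begin{equation*}
\fnorm{f(Z)_i - Z_i^*}^2 \;=\; \fnorm{\mathcal{P}(I_d + G_i) - \mathcal{P}(M_i)}^2 \;\le\; \bigl(1 + o_{\Prob}(1)\bigr)\,\fnorm{\tfrac{1}{2}(G_i - G_i^{\T})}^2.
\end{equation*}
Decompose $\tfrac{1}{2}(G_i - G_i^{\T}) = S_i^{\mathrm{bias}} + S_i^{\mathrm{ora}} + S_i^{\mathrm{crs}}$ coming from $B_i$, $E_i^{(0)}$, $E_i^{(1)}$ respectively, and apply $\fnorm{u+v}^2 \le (1+\epsilon)\fnorm{v}^2 + (1+\epsilon^{-1})\fnorm{u}^2$ with $v = S_i^{\mathrm{ora}}$, $u = S_i^{\mathrm{bias}} + S_i^{\mathrm{crs}}$, and $\epsilon$ to be tuned.

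The three pieces are handled separately. (i) For $S_i^{\mathrm{ora}}$, conditional on $A$ its above-diagonal entries are independent $\mathcal{N}(0, \sigma^2/(2d_i))$ (by rotation invariance of $W_{ij}$ and the skew-symmetric reduction), so chi-square concentration together with Bernstein for $d_i$ yield $\tfrac{1}{n}\sum_i \fnorm{S_i^{\mathrm{ora}}}^2 = (1+o_{\Prob}(1))\,\frac{\sigma^2 d(d-1)}{2np}$, matching Section \ref{sec:oracle} exactly. (ii) For $S_i^{\mathrm{bias}}$, the orthogonality identity $Z_j^{*\T}\Delta_j + \Delta_j^{\T}Z_j^* = -\Delta_j^{\T}\Delta_j$ reduces its skew part to $B_i$ plus a second-order term of size $O(\max_j \fnorm{\Delta_j}^2)$, and Cauchy--Schwarz gives $\fnorm{B_i}^2 \le d_i^{-1}\sum_{j\neq i} A_{ij}\fnorm{\Delta_j}^2$, which averages to $\lesssim \ell(Z,Z^*)$; a sharper bound exploiting the WLOG-alignment optimality---which forces $\sum_j Z_j^{*\T}\Delta_j$ to be symmetric, so that only the $A_{ij}-p$ fluctuations contribute to the skew part---yields the improvement $\lesssim \ell(Z,Z^*)/(np) + \gamma^2/(np)$. (iii) The cross term $S_i^{\mathrm{crs}}$ is controlled via $\mathbb{E}[\fnorm{E_i^{(1)}}^2 \mid A,\Delta] = \frac{d\sigma^2}{d_i^2}\sum_{j\neq i} A_{ij}\fnorm{\Delta_j}^2$, contributing $\lesssim \frac{\sigma^2}{np}\ell(Z,Z^*)$ after averaging. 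Choosing $\epsilon \asymp \sqrt{(\log n+\sigma^2)/np}$ then produces the stated $\delta_1$ and $\delta_2$.

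The main obstacle is twofold: obtaining all of the above uniformly over $\{Z \in \o(d)^n : \ell(Z,Z^*) \le \gamma\}$, and preserving the sharp leading constant $(1+\delta_2)$. Uniformity is largely automatic because the dominant concentration events---Bernstein for $d_i \approx (n-1)p$, matrix-Gaussian tails for $\max_{i,j}\opnorm{W_{ij}} \lesssim \sqrt{d + \log n}$, chi-square tails for $\fnorm{S_i^{\mathrm{ora}}}^2$ and $\fnorm{E_i^{(1)}}^2$---depend only on the data $(A,W)$ and hence hold uniformly in $Z$ a priori; the $Z$-dependent quantities enter only through deterministic inequalities driven by these data-side events, sparing a net argument. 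The sharp leading constant requires careful balancing of $\epsilon$ against the bias and cross magnitudes and relies crucially on the skew-symmetric reduction $d^2 \to d(d-1)/2$ from Section \ref{sec:oracle}; the specific form $\delta_2 = C_2(\gamma^2 + (\log n + \sigma^2)/np)^{1/4}$ arises precisely as the optimum of this balancing.
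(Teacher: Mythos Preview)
Your overall architecture matches the paper's closely, and your handling of the bias term is a genuine alternative: rather than introducing the re-centering matrix $Q=\mathcal{P}(\wt Q)$ with $\wt Q=I_d+\frac{1}{n-1}\sum_j Z_j^{*\T}(Z_j-Z_j^*B)B^{\T}$ and comparing $\wh Z_i$ to $Z_i^*QB$ (which is what the paper does), you exploit the first-order optimality of the aligning $B$ to make $\sum_j Z_j^{*\T}\Delta_j$ symmetric, so the skew part of the population bias vanishes and only $(A_{ij}-p)$-fluctuations survive. This is cleaner for the skew reduction and avoids the separate $\fnorm{Q-I_d}$ analysis; the paper's $Q$-recentering is more agnostic but introduces an extra object whose error has to be tracked through every term.

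There is, however, a real gap in your argument. You assert that $M_i=I_d+\tfrac12(G_i+G_i^{\T})$ is positive definite with high probability because $\opnorm{G_i}=o_{\mathbb P}(1)$, and then apply Lemma~\ref{lem:li} uniformly in $i$. But the hypotheses $np/\sigma^2\ge c_1$ and $np/\log n\ge c_2$ do \emph{not} force $\max_i\opnorm{E_i^{(0)}}$ to be small: a Gaussian tail plus union bound only gives $\max_i\opnorm{E_i^{(0)}}\lesssim\sigma\sqrt{(d+\log n)/np}$, and when $\sigma^2\log n\asymp np$ (which is allowed, e.g.\ $\sigma^2=\log n=\sqrt{np}$) this is order one. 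Hence for a nonempty set of indices $i$ the matrix $I_d+G_i$ may be singular or have tiny smallest singular value, and your perturbation inequality degenerates. The paper confronts this explicitly: it fixes a threshold $\rho$, uses the trivial bound $\fnorm{\wh Z_i-Z_i^*QB}^2\le 4d$ on $\{\opnorm{F_i}\vee\opnorm{G_i}\vee\opnorm{H_i}>\rho\}$, and controls the size of this bad set via a Markov-type argument on the exponential tail of $\opnorm{\sum_j A_{ij}W_{ij}Z_j^*}$ (this is exactly where the $\exp(-(np/\sigma^2)^{1/4})$ in the probability statement originates). Without this truncation-and-indicator step your bound does not close.

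A secondary issue: you control $E_i^{(1)}$ through its conditional second moment given $(A,\Delta)$, but $\Delta$ may depend on the data, so this is incompatible with your own uniformity argument. The paper instead uses the deterministic inequality $\sum_i\fnorm{\sum_j A_{ij}W_{ij}\Delta_j}^2\le\opnorm{(A\otimes\mathds 1_d\mathds 1_d^{\T})\circ W}^2\sum_j\fnorm{\Delta_j}^2$ together with the data-only event $\opnorm{(A\otimes\mathds 1_d\mathds 1_d^{\T})\circ W}\lesssim\sqrt{dnp}$; you should do the same.
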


To understand the consequence of Lemma \ref{lem:main}, we can first do a sanity check by setting $Z=Z^*$. This results in the bound
$$\ell(f(Z^*),Z^*)\leq (1+\delta_2)\frac{\sigma^2d(d-1)}{2np},$$
which is the one-step iteration error starting from the truth, and thus the oracle analysis in Section \ref{sec:oracle} is recovered.

More generally, as long as $Z^{(t-1)}$ satisfies $\ell(Z^{(t-1)},Z^*)\leq\gamma$ for some $\gamma\in(0,1/16)$, we have
\begin{equation}
\ell(Z^{(t)},Z^*) \leq \delta_1\ell(Z^{(t-1)},Z^*)+(1+\delta_2)\frac{\sigma^2d(d-1)}{2np}. \label{eq:iteration-bound}
\end{equation}
From (\ref{eq:iteration-bound}), we know that $\ell(Z^{(t)},Z^*)\leq \delta_1\gamma+(1+\delta_2)\frac{\sigma^2d(d-1)}{2np}$, which can again be bounded by $\gamma$ under the condition that $\frac{np}{\sigma^2}\geq c_1$ and $\frac{np}{\log n}\geq c_2$ for some sufficiently large constants $c_1,c_2>0$. By mathematical induction, we can conclude that (\ref{eq:iteration-bound}) holds for all $t\geq 1$ as long as $\ell(Z^{(0)},Z^*)\leq\gamma$. We can also rearrange the one-step iteration bound (\ref{eq:iteration-bound}) into a linear convergence result,
\begin{equation}
\ell(Z^{(t)},Z^*) \leq \delta_1^t\ell(Z^{(0)},Z^*)+\frac{1+\delta_2}{1-\delta_1}\frac{\sigma^2d(d-1)}{2np}, \label{eq:linear}
\end{equation}
for all $t\geq 1$. The bound (\ref{eq:linear}) implies that $Z^{(t)}$ will eventually be statistically optimal after sufficient steps of iterations. However, it does not imply that $Z^{(t)}$ converges to any fixed object. As is shown by \cite{zhong2018near,ling2020improved} via a leave-one-out argument, the algorithmic convergence of the iterative algorithm requires the condition $\sigma^2=O\left(\frac{n}{\log n}\right)$ at least when $p=1$. In comparison, the bound (\ref{eq:linear}) guarantees the statistical optimality without characterizing its convergence property, and thus only requires $\frac{np}{\sigma^2}$ to be sufficiently large.

As we have discussed in the introduction section, the two conditions $\frac{np}{\sigma^2}\geq c_1$ and $\frac{np}{\log n}\geq c_2$ are essentially necessary for the result to hold, at least when $d$ is bounded by a constant. If both conditions are slightly strengthened to $\frac{np}{\sigma^2}\rightarrow\infty$ and $\frac{np}{\log n}\rightarrow\infty$, the same result of Lemma \ref{lem:main} will hold with vanishing $\delta_1$ and $\delta_2$.

The proof of Lemma \ref{lem:main} also holds more generally for $d$ that can potentially grow. Without assuming $d\leq C$ for some constant $C>0$, we would obtain the same high probability bound with $\delta_1=C_1\sqrt{\frac{d\log n+d^2\sigma^2}{np}}$, $\delta_2=C_2\left(\gamma^2+\frac{\log n+d\sigma^2}{np}\right)^{1/4}$, and the conditions replaced by $\frac{np}{d^2\sigma^2}\geq c_1$ and $\frac{np}{d\log n}\geq c_2$.

\subsection{Optimal Upper Bound} \label{sec:od-upper}

To derive the minimax upper bound for the $\o(d)$ synchronization from Lemma \ref{lem:main}, we need to construct an initialization $Z^{(0)}\in\o(d)^n$ whose statistical error $\ell(Z^{(0)},Z^*)$ is sufficiently small. Let us first organize the observations $\{Y_{ij}\}_{1\leq i<j\leq n}$ into a matrix $Y\in\mathbb{R}^{nd\times nd}$. That is,
\begin{equation}
Y=\begin{pmatrix}
Y_{11} & \cdots & Y_{1n} \\
\vdots & \ddots & \vdots \\
Y_{n1} & \cdots & Y_{nn}
\end{pmatrix}, \label{eq:big-matrix}
\end{equation}
where $Y_{ji}=Y_{ij}$ for all $1\leq i<j\leq n$ and $Y_{ii}=I_d$ for all $i\in[n]$. The noise matrices $\{W_{ij}\}_{1\leq i<j\leq n}$ can be organized into $W\in\mathbb{R}^{nd\times nd}$ with the same arrangement as (\ref{eq:big-matrix}), and we set $W_{ji}=W_{ij}$ for all $1\leq i<j\leq n$ and $W_{ii}=0$ for all $i\in[n]$. Then, we can write the model (\ref{eq:od-model}) as
$$Y=Z^*Z^{*\T}+\sigma W,$$
where $Z^{*\T}=(Z_1^{*\T},\cdots,Z_n^{*\T})\in\mathbb{R}^{d\times nd}$. In other words, $Y$ can be viewed as a noisy version of the rank-$d$ matrix $Z^*Z^{*\T}$, and thus we can use a spectral method to estimate the column space of $Z^*$. Since we do not observe all $Y_{ij}$'s, the spectral method can be applied to $(A\otimes \mathds{1}_d\mathds{1}_d^T)\circ Y$, where $A\in\{0,1\}^{n\times n}$ with $A_{ji}=A_{ij}$ for all $1\leq i<j\leq n$ and $A_{ii}=0$ for all $i\in[n]$. Recall that $\otimes$ stands for the matrix Kronecker product and $\circ$ denotes the Hadamard product. To compute $Z^{(0)}\in\o(d)^n$, we first find
\begin{equation}
\wh{U}=\argmax_{U\in\o(nd,d)}\Tr\left(U^{\T}\left((A\otimes \mathds{1}_d\mathds{1}_d^T)\circ Y\right)U\right), \label{eq:PCA}
\end{equation}
and then compute
\begin{equation}
Z_i^{(0)}=\begin{cases}
\P(\wh{U}_i), & \det(\wh{U}_i)\neq 0, \\
I_d, & \det(\wh{U}_i)=0,
\end{cases} \label{eq:od-ini}
\end{equation}
for all $i\in[n]$. Here, $\wh{U}_i$ stands for the $i$th $d\times d$ block of $\wh{U}$ and thus $\wh{U}^{\T}=(\wh{U}_1^{\T},\cdots,\wh{U}_n^{\T})$. The error bound of $Z^{(0)}$ is given by the following lemma.
\begin{lemma}\label{lem:od-ini}
For the $\o(d)$ Synchronization (\ref{eq:od-setting}),
assume $\frac{np}{\log n}\geq c$ for some sufficiently large constants $c>0$ and $2\leq d\leq C_1$ for some constant $C_1>0$. Then, we have
$$\ell(Z^{(0)},Z^*)\leq C\frac{\sigma^2+1}{np},$$
with probability at least $1-n^{-9}$ for some constant $C>0$ only depending on $C_1$.
\end{lemma}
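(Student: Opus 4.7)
The plan is to treat the initialization as a standard low-rank matrix recovery problem: show that $M := (A \otimes \mathds{1}_d\mathds{1}_d^\T) \circ Y$ (symmetric under $Y_{ji} = Y_{ij}^\T$) has expectation of rank $d$ with eigenspace $\mathrm{col}(Z^*)$, bound the operator-norm perturbation $M-\mathbb{E}M$, apply a Davis--Kahan bound to $\wh U$, and finally translate the Frobenius-norm closeness of $\wh U$ to $U^*:=Z^*/\sqrt n$ into a per-block loss via the contractivity of $\mathcal P$.

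First I would compute $\mathbb{E}M$. Since $A_{ii}=0$ and $\mathbb{E}[A_{ij}Y_{ij}] = pZ_i^*Z_j^{*\T}$ for $i\neq j$, one has $\mathbb{E}M = p(Z^*Z^{*\T} - I_{nd})$. From $Z^{*\T}Z^* = nI_d$, the matrix $Z^*Z^{*\T}$ has its $d$ nonzero eigenvalues all equal to $n$ with eigenspace $\mathrm{col}(Z^*)=\mathrm{col}(U^*)$, so $\mathbb{E}M$ has top eigenvalue $p(n-1)$ of multiplicity $d$, the remaining eigenvalues equal $-p$, and the spectral gap is $pn$. Next I would bound $\|M-\mathbb{E}M\|_{\mathrm{op}}$ by splitting the perturbation on block $(i,j)$ into $(A_{ij}-p)Z_i^*Z_j^{*\T}$ and $\sigma A_{ij}W_{ij}$. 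For the Bernoulli part, let $D_{Z^*}:=\mathrm{blockdiag}(Z_1^*,\dots,Z_n^*)\in\o(nd)$ and $B\in\mathbb{R}^{n\times n}$ be the centered adjacency matrix with zero diagonal; then the block matrix equals $D_{Z^*}(B\otimes I_d)D_{Z^*}^\T$, so its operator norm equals $\|B\|_{\mathrm{op}}\lesssim\sqrt{np}$ with probability $\geq 1-n^{-10}$ under $np\gtrsim\log n$ by standard random-graph concentration. For the masked Gaussian part, matrix Bernstein (or conditioning on $A$ and applying a Gaussian operator-norm tail bound) gives $\sigma\cdot\|[A_{ij}W_{ij}]\|_{\mathrm{op}}\lesssim \sigma\sqrt{npd}$. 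Combining, $\|M-\mathbb{E}M\|_{\mathrm{op}}\lesssim\sqrt{np(1+\sigma^2)}$ when $d=O(1)$.

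Third, apply Davis--Kahan to the top-$d$ eigenspace to obtain
$$\|\wh U\wh U^\T - U^*U^{*\T}\|_F \;\lesssim\; \frac{\sqrt d\,\|M-\mathbb{E}M\|_{\mathrm{op}}}{pn}\;\lesssim\;\sqrt{\tfrac{d(1+\sigma^2)}{np}},$$
and choose $O\in\o(d)$ with $\|\wh U-U^*O\|_F^2 \leq 2\|\wh U\wh U^\T-U^*U^{*\T}\|_F^2\lesssim \frac{d(1+\sigma^2)}{np}$. Rescaling blocks, $\sum_i \|\sqrt n\,\wh U_i-Z_i^*O\|_F^2 = n\|\wh U-U^*O\|_F^2\lesssim \frac{d(1+\sigma^2)}{p}$. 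Because $\mathcal P$ is scale-invariant, $Z_i^{(0)} = \mathcal P(\sqrt n\,\wh U_i)$ (on the event $\det(\wh U_i)\neq 0$, which holds almost surely under the Gaussian model); and because $\mathcal P(X)$ is the Frobenius-nearest element of $\o(d)$ to $X$, for any $R\in\o(d)$ the triangle inequality gives $\|\mathcal P(X)-R\|_F\leq\|\mathcal P(X)-X\|_F+\|X-R\|_F\leq 2\|X-R\|_F$. Summing this bound with $X=\sqrt n\,\wh U_i$, $R=Z_i^*O$ and using the candidate $O$ in the infimum defining $\ell$ yields
$$\ell(Z^{(0)},Z^*)\;\leq\;\tfrac{4}{n}\sum_i\|\sqrt n\,\wh U_i-Z_i^*O\|_F^2\;\lesssim\;\tfrac{d(1+\sigma^2)}{np}\;=\;O\!\left(\tfrac{\sigma^2+1}{np}\right)$$
since $d=O(1)$.

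The main obstacle will be the operator-norm bound in the second step, specifically for the masked Gaussian block matrix. One needs to carry out matrix Bernstein (or a Gaussian concentration conditioned on $A$ combined with concentration of $A$) on an $nd\times nd$ symmetric array whose entries are blocks of independent $\mathcal{MN}(0,I_d,I_d)$ matrices masked by Bernoulli variables, and ensure that the block structure and the symmetrization do not introduce spurious factors so that the bound scales as $\sigma\sqrt{np}$ rather than $\sigma\sqrt{n^2 p}$ or similar. Once that is in hand, the Davis--Kahan step and the polar-contractivity step are routine and, importantly, only need a loose constant (unlike the oracle analysis of Section~\ref{sec:oracle}, which requires a sharp leading constant).
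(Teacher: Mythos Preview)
Your proposal is correct and follows the same overall strategy as the paper: split the operator-norm perturbation into a Bernoulli part and a masked-Gaussian part (the paper's Lemma~\ref{lem:ER-graph} and Lemma~\ref{lem:bandeira} supply exactly the $\sqrt{np}$ and $\sigma\sqrt{dnp}$ bounds you anticipate), pass to a Frobenius bound on the eigenspace, and then contract through $\mathcal P$. The differences are only in the tools chosen for the last two steps. The paper uses a basic-inequality argument from the variational characterization of $\wh U$ together with Lemma~\ref{lem:loss-re} rather than Davis--Kahan, and for the polar step it invokes Li's perturbation bound (Lemma~\ref{lem:li}) where your triangle inequality via $\mathcal P(X)=\argmin_{B\in\o(d)}\fnorm{B-X}$ is a bit more elementary and gives the same factor~$2$. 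One small point: rather than asserting $\det(\wh U_i)\neq 0$ almost surely, the paper handles this case by an explicit indicator split and Markov's inequality (using Lemma~\ref{lem:ipsen} to show that $\det(\wh Z_i)=0$ forces $\fnorm{\wh Z_i-Z_i^*B}$ to be bounded away from zero), which is slightly more robust and is in any event needed for the $\so(d)$ analogue.
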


The error rate $\frac{\sigma^2+1}{np}$ is the sum of two terms. The first term $\frac{\sigma^2}{np}$ is from the additive Gaussian noise in the model (\ref{eq:od-model}).  The second term $\frac{1}{np}$ is a consequence of the randomness from the graph. It comes from an upper bound $\opnorm{A-\mathbb{E}A}\lesssim \sqrt{np}$. In fact, it can be slightly improved by $\opnorm{A-\mathbb{E}A}\lesssim \sqrt{n(p\wedge (1-p))}$, which makes a difference when $1-p$ is small. This leads to the second term $\frac{1}{np}$ replaced by $\frac{p\wedge(1-p)}{np^2}$. As a result, when $\sigma^2=0$ and $p=1$, we have $\ell(Z^{(0)},Z^*)=0$, i.e., perfect recovery of $Z^*$.

By Lemma \ref{lem:od-ini}, we know that $\ell(Z^{(0)},Z^*)$ is sufficiently small when $\frac{np}{\sigma^2}$ and $\frac{np}{\log n}$ are sufficiently large. Then, we can directly apply Lemma \ref{lem:main} and its consequence  (\ref{eq:linear}) to obtain the desired upper bound.

\begin{thm}\label{thm:upper-od}
For the $\o(d)$ Synchronization (\ref{eq:od-setting}), assume $\frac{np}{\sigma^2}\geq c_1$ and $\frac{np}{\log n}\geq c_2$ for some sufficiently large constants $c_1,c_2>0$ and $2\leq d\leq C_1$ for some constant $C_1>0$. Consider the algorithm (\ref{eq:os-iter}) initialized by (\ref{eq:od-ini}). We have
$$\ell(Z^{(t)},Z^*)\leq \left(1+C\left(\frac{\log n+\sigma^2}{np}\right)^{1/4}\right)\frac{\sigma^2d(d-1)}{2np},$$
for all $t\geq\log\left(\frac{1}{\sigma^2}\right)$ with probability at least $1-2n^{-9}-\exp\left(-\br{\frac{ np}{\sigma^2}}^{1/4}\right)$ for some constant $C>0$ only depending on $C_1$.
\end{thm}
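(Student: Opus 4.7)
The plan is to combine the spectral-initialization guarantee of Lemma~\ref{lem:od-ini} with the one-step contraction bound of Lemma~\ref{lem:main}, and then to choose $t$ large enough that the transient term in the resulting linear-convergence recursion is absorbed into the statistical error. The theorem is essentially a corollary of the two lemmas; the work is bookkeeping on rates and probabilities.

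First I would invoke Lemma~\ref{lem:od-ini} to conclude that, on an event $\mathcal{E}_1$ of probability at least $1-n^{-9}$, the spectral initialization satisfies $\ell(Z^{(0)},Z^*)\leq C_0(\sigma^2+1)/(np)$. Under the hypotheses $np/\sigma^2\geq c_1$ and $np/\log n\geq c_2$ with $c_1,c_2$ sufficiently large, this upper bound is smaller than some fixed $\gamma\in(0,1/16)$, placing $Z^{(0)}$ in the basin where Lemma~\ref{lem:main} is valid.

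Next, I would intersect with the event $\mathcal{E}_2$ of Lemma~\ref{lem:main} (probability at least $1-n^{-9}-\exp(-(np/\sigma^2)^{1/4})$) on which
$$\ell(f(Z),Z^*)\leq \delta_1\,\ell(Z,Z^*)+(1+\delta_2)\,\frac{\sigma^2 d(d-1)}{2np}$$
holds simultaneously for every $Z$ with $\ell(Z,Z^*)\leq\gamma$, where $\delta_1=C_1\sqrt{(\log n+\sigma^2)/(np)}$ and $\delta_2=C_2(\gamma^2+(\log n+\sigma^2)/(np))^{1/4}$. An induction on $t$ exactly as in the discussion around equation~(\ref{eq:iteration-bound}) shows that the ball $\{Z:\ell(Z,Z^*)\leq\gamma\}$ is forward-invariant under $f$, since the hypotheses guarantee $\delta_1\gamma+(1+\delta_2)\sigma^2 d(d-1)/(2np)\leq\gamma$. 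Unrolling the recursion yields~(\ref{eq:linear}):
$$\ell(Z^{(t)},Z^*)\leq \delta_1^t\,\ell(Z^{(0)},Z^*)+\frac{1+\delta_2}{1-\delta_1}\cdot\frac{\sigma^2 d(d-1)}{2np}.$$

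Finally, I would verify that for $t\geq\log(1/\sigma^2)$ the transient term is dominated by the target rate. Under the conditions we can arrange $\delta_1\leq 1/e$, so $\delta_1^t\leq e^{-t}\leq\sigma^2$, giving $\delta_1^t\ell(Z^{(0)},Z^*)\leq C_0\sigma^2(\sigma^2+1)/(np)$. Splitting into $\sigma^2\geq 1$ and $\sigma^2<1$ and using the hypotheses, this is bounded by $C\bigl((\log n+\sigma^2)/(np)\bigr)^{1/4}\sigma^2 d(d-1)/(2np)$. Writing $(1+\delta_2)/(1-\delta_1)=1+O(\delta_1+\delta_2)=1+O\bigl(((\log n+\sigma^2)/(np))^{1/4}\bigr)$ and combining yields the advertised bound, while a union bound over $\mathcal{E}_1$ and $\mathcal{E}_2$ gives the stated probability $1-2n^{-9}-\exp(-(np/\sigma^2)^{1/4})$.

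The only subtle step is the absorption in the last paragraph, where the choice $t\geq\log(1/\sigma^2)$ is just barely sufficient: it works because the contraction factor $\delta_1$ can be made at most $1/e$ by enforcing $\log(np/\log n)$ to be at least a constant determined by $C_1$, which in turn is guaranteed by taking $c_2$ large. If one traces the bookkeeping, this is the main place where the hypothesis $np/\log n\geq c_2$ enters quantitatively; everything else is a direct consequence of the two preceding lemmas.
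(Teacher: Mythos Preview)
Your strategy---combine Lemma~\ref{lem:od-ini} with Lemma~\ref{lem:main} and unroll the recursion~(\ref{eq:linear})---is exactly how the paper proceeds; indeed the paper presents Theorem~\ref{thm:upper-od} as an immediate corollary of those two lemmas without spelling out the bookkeeping you supply.

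There is, however, a slip in your absorption step. From $\delta_1^t\leq e^{-t}\leq\sigma^2$ and $\ell(Z^{(0)},Z^*)\leq C_0(\sigma^2+1)/(np)$ you obtain a transient of order $\sigma^2(\sigma^2+1)/(np)$, which for $\sigma^2\leq 1$ is of the \emph{same} order as the target $\sigma^2 d(d-1)/(2np)$: the ratio is a fixed constant, not $O\bigl(((\log n+\sigma^2)/(np))^{1/4}\bigr)$, so it cannot be folded into the $(1+C(\cdot)^{1/4})$ prefactor as you claim. The remedy is to retain one power of $\delta_1$ rather than discard it: for $t\geq 1$ write $\delta_1^t=\delta_1\cdot\delta_1^{t-1}\leq e\,\delta_1\,e^{-t}\leq e\,\delta_1\,\sigma^2$ when $t\geq\log(1/\sigma^2)$ and $\delta_1\leq 1/e$; the extra factor $\delta_1=C_1\sqrt{(\log n+\sigma^2)/(np)}$ then supplies the required $((\log n+\sigma^2)/(np))^{1/4}$ smallness. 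For $\sigma^2\geq 1$ the simpler bound $\delta_1^t\leq\delta_1$ (valid for $t\geq 1$) already does the job. With this adjustment the rest of your argument goes through unchanged.
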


\begin{remark}\label{rmk:1}
The proof of Lemma \ref{lem:od-ini} also holds more generally for $d$ that can potentially grow.
When $d$ grows, the initialization has the error bound $\ell(Z^{(0)},Z^*)\leq C\frac{d^4(d\sigma^2+1)}{np}$ with high probability. As a consequence, without assuming $d\leq C_1$, the result of Theorem \ref{thm:upper-od} becomes
$$\ell(Z^{(t)},Z^*)\leq \left(1+C\left(\left(\frac{d\log n+d^2\sigma^2}{np}\right)^{1/4}+\frac{d^2}{\sqrt{np}}\right)\right)\frac{\sigma^2d(d-1)}{2np},$$
with high probability under the conditions that $\frac{np}{d\log n}$, $\frac{np}{d^2\sigma^2}$ and $\frac{np}{d^4}$ are sufficiently large. The same iterative algorithm has also been analyzed by \cite{ling2020improved} for an $\ell_{\infty}$ type loss when $p=1$, and they showed that
$$
\min_{B\in\o(d)}\max_{i\in[n]}\fnorm{{Z}_i^{(t)}-Z_i^*B}^2 \leq C\frac{\sigma^2(d^2+d\log n)}{n}, \label{eq:ling}
$$
with high probability under the condition that $\frac{n}{\sigma^2(d^2+d\log n)}$ is sufficiently large.
\end{remark}

\subsection{Rotation Group Synchronization}\label{sec:sod}

In this section, we study $\so(d)$ synchronization, where our goal is to estimate $Z^*\in\so(d)^n$ from the observations (\ref{eq:od-model}) on a random graph. The loss function for this problem is defined by
$$\bar{\ell}(Z,Z^*)=\min_{B\in\so(d)}\frac{1}{n}\sum_{i=1}^n\fnorm{Z_i-Z_i^*B}^2,$$
for any $Z,Z^*\in\so(d)^n$.
The $\so(d)$ synchronization problem requires a slight modification of the iterative algorithm (\ref{eq:os-iter}). To do this, we first introduce an operator $\bar{\P}(\cdot)$ that maps a $d\times d$ full-rank matrix to $\so(d)$. For a full-rank squared matrix $X\in\mathbb{R}^{d\times d}$ that admits an SVD $X=UDV^{\T}$, we define
$$\bar{\mathcal{P}}(X)=U\begin{pmatrix}
I_{d-1} & 0 \\
0 & \det(UV^{\T})
\end{pmatrix}V^{\T}.$$
The only difference from $\P(X)$ is the diagonal matrix with the last entry $\det(UV^{\T})$ sandwiched between $U$ and $V$. It is clear that $\det(UV^{\T})\in\{-1,1\}$ and thus $\det(\bar{\mathcal{P}}(X))=1$, which implies $\bar{\mathcal{P}}(X)\in\so(d)$. By \cite{kabsch1978discussion}, $\bar{\mathcal{P}}(X)$ can also be characterized as the solution to an optimization problem. That is,
$$\bar{\mathcal{P}}(X)=\argmin_{B\in\so(d)}\fnorm{B-X}^2.$$
Then, similar to the motivation behind the iteration (\ref{eq:os-iter}), we consider an iterative procedure for $\so(d)$ synchronization,
\begin{equation}
Z_i^{(t)}=\begin{cases}
\bar{\mathcal{P}}\left(\sum_{j\in[n]\backslash\{i\}}A_{ij}Y_{ij}Z_j^{(t-1)}\right), & \det\left(\sum_{j\in[n]\backslash\{i\}}A_{ij}Y_{ij}Z_j^{(t-1)}\right)\neq 0, \\
Z_i^{(t-1)}, & \det\left(\sum_{j\in[n]\backslash\{i\}}A_{ij}Y_{ij}Z_j^{(t-1)}\right)=0.
\end{cases}\label{eq:sos-iter}
\end{equation}

The iteration (\ref{eq:sos-iter}) enjoys a similar convergence property as given by Lemma \ref{lem:main} with a good initialization. This result is stated as Lemma \ref{lem:main-2} in Section \ref{sec:pf-sod}. Mathematically, one can show that as long as $\bar{\ell}(Z^{(t-1)},Z^*)\leq\gamma$ for some sufficiently small $\gamma$, the determinant of $\sum_{j\in[n]\backslash\{i\}}A_{ij}Y_{ij}Z_j^{(t-1)}$ is positive for most $i\in[n]$ so that $\bar{\P}(\cdot)=\P(\cdot)$ for those $i$'s. With this argument, the same proof that leads to the conclusion of Lemma \ref{lem:main} also characterizes the convergence of (\ref{eq:sos-iter}).

To initialize the iterative procedure (\ref{eq:sos-iter}), we also use a spectral method. Define
\begin{equation}
Z_i^{(0)}=\begin{cases}
\bar{\P}(\wh{U}_i), & \det(\wh{U}_i)\neq 0, \\
I_d, & \det(\wh{U}_i)=0,
\end{cases} \label{eq:sod-ini}
\end{equation}
where $\wh{U}^{\T}=(\wh{U}_1^{\T},\cdots,\wh{U}_n^{\T})$ is given by (\ref{eq:PCA}). It is clear that $Z_i^{(0)}\in\so(d)$ for all $i\in[n]$. We show in Lemma \ref{lem:sod-ini} that $\bar{\ell}(Z^{(0)},Z^*)$ is sufficiently small. Together with the statistical property of the iterative procedure (\ref{eq:sos-iter}), we have the following upper bound result for $\so(d)$ synchronization.

\begin{thm}\label{thm:upper-sod}
For the $\so(d)$ Synchronization (\ref{eq:sod-setting}), assume $\frac{np}{\sigma^2}\geq c_1$ and $\frac{np}{\log n}\geq c_2$ for some sufficiently large constants $c_1,c_2>0$ and $2\leq d\leq C_1$ for some constant $C_1>0$. Consider the algorithm (\ref{eq:sos-iter}) initialized by (\ref{eq:sod-ini}). We have
$$\bar{\ell}(Z^{(t)},Z^*)\leq \left(1+C\left(\frac{\log n+\sigma^2}{np}\right)^{1/4}\right)\frac{\sigma^2d(d-1)}{2np},$$
for all $t\geq\log\left(\frac{1}{\sigma^2}\right)$ with probability at least $1-2n^{-9}-\exp\left(-\br{\frac{ np}{\sigma^2}}^{1/4}\right)$ for some constant $C>0$ only depending on $C_1$.
\end{thm}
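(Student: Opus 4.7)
The plan is to mirror the proof of Theorem \ref{thm:upper-od} with two modifications to handle the $\so(d)$ constraint, relying on the two auxiliary results already alluded to in the text (Lemma \ref{lem:main-2} and Lemma \ref{lem:sod-ini}). First I would establish an analog of Lemma \ref{lem:od-ini} for the $\so(d)$ initialization (\ref{eq:sod-ini}), showing that $\bar{\ell}(Z^{(0)},Z^*)\leq C\frac{\sigma^2+1}{np}$ with probability at least $1-n^{-9}$. Since $\wh{U}$ in (\ref{eq:PCA}) is identical to the $\o(d)$ case, a Davis--Kahan / $\sin\Theta$ argument already yields that $\wh{U}\wh{U}^{\T}$ is close to the projection onto the column space of $Z^*$. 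The only new point is that when the $\o(d)$-optimal aligner $B\in\o(d)$ between $\wh{U}$ and $Z^*$ turns out to have $\det(B)=-1$, we would argue (using $Z_i^*\in\so(d)$ and the positivity of $\sum_jA_{ij}$) that the aligned $\wh{U}_i$'s are close to rotation matrices so that $\det(\wh{U}_i)>0$ for most $i$, whence $\bar{\mathcal{P}}(\wh{U}_i)=\mathcal{P}(\wh{U}_i)$; the few indices with $\det(\wh{U}_i)\leq 0$ are controlled by a counting argument using Markov's inequality on the Frobenius error.

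Second, I would prove Lemma \ref{lem:main-2}, the $\so(d)$ analog of Lemma \ref{lem:main}, by reducing it to the already-proved $\o(d)$ version. Fix $Z\in\so(d)^n$ with $\bar{\ell}(Z,Z^*)\leq\gamma$, let $B\in\so(d)$ attain the minimum, and decompose
\begin{equation*}
\sum_{j\in[n]\setminus\{i\}}A_{ij}Y_{ij}Z_j \;=\; Z_i^*\sum_{j\in[n]\setminus\{i\}}A_{ij}Z_j^{*\T}Z_j \;+\; \sigma\sum_{j\in[n]\setminus\{i\}}A_{ij}W_{ij}Z_j.
\end{equation*}
Since $Z_j\approx Z_j^*B$ for the bulk of indices, the first term is close to $Z_i^*B\sum_{j}A_{ij}$, which has determinant $\big(\sum_jA_{ij}\big)^d>0$ with high probability (using $\frac{np}{\log n}\geq c_2$). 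A quantitative perturbation together with the Weyl-type operator norm bound on the noise term (which has variance $\sigma^2$, and is $O(\sigma\sqrt{np})$ on operator norm) shows that the determinant remains positive as long as $\bar{\ell}(Z,Z^*)\leq\gamma$ is small enough, and hence $\bar{\mathcal{P}}(\cdot)=\mathcal{P}(\cdot)$ for every such $i$. Once we are inside the regime where $\bar{\mathcal{P}}=\mathcal{P}$, the oracle perturbation analysis of Section \ref{sec:oracle} and the full iteration analysis used to prove Lemma \ref{lem:main} apply verbatim, only with the minimization over $\o(d)$ replaced by one over $\so(d)$ (both produce the same aligner here because the attaining $B$ lies in $\so(d)$).

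Finally I would assemble the theorem. Given Lemma \ref{lem:sod-ini}, $\bar{\ell}(Z^{(0)},Z^*)\leq C\frac{\sigma^2+1}{np}$ is well below the threshold $1/16$ required by Lemma \ref{lem:main-2} under the assumptions $\frac{np}{\sigma^2}\geq c_1$, $\frac{np}{\log n}\geq c_2$. Inductive application of the one-step bound
\begin{equation*}
\bar{\ell}(Z^{(t)},Z^*)\leq \delta_1\bar{\ell}(Z^{(t-1)},Z^*)+(1+\delta_2)\frac{\sigma^2 d(d-1)}{2np}
\end{equation*}
yields, as in (\ref{eq:linear}),
\begin{equation*}
\bar{\ell}(Z^{(t)},Z^*)\leq \delta_1^t\bar{\ell}(Z^{(0)},Z^*)+\frac{1+\delta_2}{1-\delta_1}\frac{\sigma^2 d(d-1)}{2np}.
\end{equation*}
For $t\geq \log(1/\sigma^2)$, the geometric factor $\delta_1^t$ makes the first term negligible relative to $\frac{\sigma^2 d(d-1)}{2np}$ (recall $\delta_1=C_1\sqrt{(\log n+\sigma^2)/(np)}$), and combining $\frac{1+\delta_2}{1-\delta_1}=1+O\bigl((\tfrac{\log n+\sigma^2}{np})^{1/4}\bigr)$ yields the stated bound. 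The probability bound $1-2n^{-9}-\exp(-(np/\sigma^2)^{1/4})$ comes from union-bounding the $n^{-9}$ events in Lemma \ref{lem:sod-ini} and Lemma \ref{lem:main-2}.

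The main obstacle I expect is the determinant positivity step in Lemma \ref{lem:main-2}: one must show that $\det\bigl(\sum_{j}A_{ij}Y_{ij}Z_j^{(t-1)}\bigr)>0$ holds for \emph{every} $i\in[n]$ with high probability (not just most), because any index $i$ where $\bar{\mathcal{P}}\neq\mathcal{P}$ introduces an $O(1)$ coordinatewise loss that can destroy the sharp $\frac{\sigma^2 d(d-1)}{2np}$ constant. Proving this uniformly in $i$ requires a careful union bound over $n$ events, each driven by a tail bound on $\opnorm{\sum_jA_{ij}W_{ij}Z_j}$ compared to the signal strength $\sum_jA_{ij}\asymp np$; the condition $\frac{np}{\sigma^2}\to\infty$ is exactly what makes this gap work.
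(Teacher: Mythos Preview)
Your assembly of the theorem from Lemma \ref{lem:main-2} and Lemma \ref{lem:sod-ini} is exactly the paper's proof, and the high-level reduction of Lemma \ref{lem:main-2} to the $\o(d)$ analysis via determinant positivity is also the paper's route. Two points in the sketches of the auxiliary lemmas need correction, though.

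\textbf{Initialization, the case $\det(B)=-1$.} Your claim that in this case ``the aligned $\wh{U}_i$'s are close to rotation matrices so that $\det(\wh{U}_i)>0$ for most $i$'' is backwards. If $Z_i^*\in\so(d)$ and $\det(B)=-1$, then $\det(Z_i^*B)=-1$, and since $\wh{U}_i\approx Z_i^*B$ one has $\det(\wh{U}_i)<0$ for \emph{most} $i$, so $\bar{\mathcal{P}}(\wh{U}_i)\neq\mathcal{P}(\wh{U}_i)$ on the bulk, and a Markov counting argument cannot save you. The paper handles this case with a different idea: $\bar{\mathcal{P}}$ is invariant under flipping the sign of the last column (Lemma \ref{lem:P-bar-prop}), so one replaces $\wh{U}_i$ by $\wt{U}_i$ and $B$ by $\wt{B}\in\so(d)$ obtained by flipping the last column; now $\wt{U}_i\approx Z_i^*\wt{B}$ with $\det(Z_i^*\wt{B})=1$, hence $\bar{\mathcal{P}}(\wh{U}_i)=\bar{\mathcal{P}}(\wt{U}_i)=\mathcal{P}(\wt{U}_i)$, and Lemma \ref{lem:li} applies. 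This sign-flip invariance is the missing ingredient in your sketch.

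\textbf{Uniformity in Lemma \ref{lem:main-2}.} Your ``main obstacle'' is a non-issue, and your proposed fix would actually fail under the stated assumptions. You do \emph{not} need $\det\bigl(\sum_j A_{ij}Y_{ij}Z_j\bigr)>0$ for every $i$: indices where $\bar{\mathcal{P}}\neq\mathcal{P}$ contribute at most $4d/n$ each to $\bar{\ell}$, and the proof of Lemma \ref{lem:main} already absorbs such indices into the indicator terms $4d\,\mathbb{I}\{\opnorm{F_i}\vee\opnorm{G_i}\vee\opnorm{H_i}>\rho\}$, whose average is controlled by (\ref{eq:f2}), (\ref{eq:g2}), (\ref{eq:h1}). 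The paper's argument is simply to check (via the determinant perturbation Lemma \ref{lem:ipsen}) that the \emph{local} condition $\opnorm{F_i}\vee\opnorm{G_i}\vee\opnorm{H_i}\leq\rho$ already forces $\det(\wt{Z}_i)>0$; hence $\{i:\bar{\mathcal{P}}\neq\mathcal{P}\}$ is contained in the bad set that the $\o(d)$ analysis already handles, and the rest is verbatim. By contrast, a union bound over $i$ for $\opnorm{\sum_j A_{ij}W_{ij}Z_j}$ would need $np/\sigma^2\gtrsim\log n$, which is strictly stronger than the hypothesis $np/\sigma^2\geq c_1$; so the uniform route you describe is both unnecessary and insufficient.
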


\section{Minimax Lower Bound}\label{sec:lower}

In this section, we derive the lower bound part of the main result. We first consider $\o(d)$ synchronization, and the minimax risk is given by
\begin{equation}
\inf_{\wh{Z}\in\o(d)^n}\sup_{Z\in\o(d)^n}\mathbb{E}_Z\ell(\wh{Z},Z)=\inf_{\wh{Z}\in\o(d)^n}\sup_{Z\in\o(d)^n}\mathbb{E}_Z\left[\min_{B\in\o(d)}\frac{1}{n}\sum_{i=1}^n\fnorm{\wh{Z}_i-Z_iB}^2\right].
\end{equation}
Our first step is to apply Lemma \ref{lem:loss-re} and lower bound the loss function $\ell(\wh{Z},Z)$ by
\begin{equation}
\ell(\wh{Z},Z) \geq \frac{1}{2n^2}\sum_{i=1}^n\sum_{j=1}^n\fnorm{\wh{Z}_i\wh{Z}_j^{\T}-Z_iZ_j^{\T}}^2. \label{eq:lower-step1}
\end{equation}
Compared with $\ell(\wh{Z},Z)$, the right hand side of (\ref{eq:lower-step1}) is a loss that is decomposable across all pairs $(i,j)\in[n]^2$, and therefore it is sufficient to lower bound the 
estimation error of each individual $Z_iZ_j^{\T}$ and then to aggregate the results. Following this strategy, we have
\begin{eqnarray}
\nonumber && \inf_{\wh{Z}\in\o(d)^n}\sup_{Z\in\o(d)^n}\mathbb{E}_Z\ell(\wh{Z},Z) \\
\nonumber &\geq& \frac{1}{2n^2}\inf_{\wh{Z}\in\o(d)^n}\sup_{Z\in\o(d)^n}\sum_{i=1}^n\sum_{j=1}^n\mathbb{E}_Z\fnorm{\wh{Z}_i\wh{Z}_j^{\T}-Z_iZ_j^{\T}}^2 \\
\label{eq:lower-ineq1} &\geq& \frac{1}{2n^2}\inf_{\wh{Z}\in\o(d)^n}\sum_{1\leq i\neq j\leq n}\int\mathbb{E}_Z\fnorm{\wh{Z}_i\wh{Z}_j^{\T}-Z_iZ_j^{\T}}^2\prod_{k=1}^n\diff \Pi(Z_k) \\
\label{eq:lower-ineq2} &\geq& \frac{1}{2n^2}\sum_{1\leq i\neq j\leq n}\int\left(\inf_{\wh{T}}\int\int \mathbb{E}_Z\fnorm{\wh{T}-Z_iZ_j^{\T}}^2\diff \Pi(Z_i)\diff \Pi(Z_j)\right)\prod_{k\in[n]\backslash\{i,j\}}\diff \Pi(Z_k),
\end{eqnarray}
where $\Pi$ is some probability distribution supported on $\o(d)$ to be specified later. The inequality (\ref{eq:lower-ineq1}) lower bounds the minimax risk with a Bayes risk, and (\ref{eq:lower-ineq2}) is lower bounding infimum of average by average of infimum. Now it suffices to lower bound
\begin{equation}
\inf_{\wh{T}}\int\int \mathbb{E}_Z\fnorm{\wh{T}-Z_iZ_j^{\T}}^2\diff \Pi(Z_i)\diff \Pi(Z_j), \label{eq:local-problem}
\end{equation}
for each $\{Z_k\}_{k\in[n]\backslash\{i,j\}}$ and each $i\neq j$. The quantity (\ref{eq:local-problem}) can be understood as the Bayes risk of estimating $Z_iZ_j^{\T}$ given the knowledge of $\{Z_k\}_{k\in[n]\backslash\{i,j\}}$.

To analyze (\ref{eq:local-problem}), we first need to construct a probability distribution $\Pi$ on $\o(d)$. Given $\{r_{ab}\}_{1\leq a<b\leq d}$ and $\{s_{ab}\}_{1\leq b\leq a\leq d}$, we can form the following $d\times d$ matrix,
\begin{equation}
Q=\begin{pmatrix}
s_{11} & r_{12} & r_{13} & \cdots & r_{1d} \\
s_{21} & s_{22} & r_{23} & \cdots & r_{2d} \\
\vdots & \vdots & \vdots & \ddots  & \vdots \\
s_{d-1,1} & s_{d-1,2} & s_{d-1,3} & \cdots & r_{d-1,d} \\
s_{d1} & s_{d2} & s_{d3} & \cdots & s_{dd}
\end{pmatrix}.\label{eq:Q-prior}
\end{equation}
In other words, $\{r_{ab}\}_{1\leq a<b\leq d}$ are the upper triangular elements of $Q$, and the lower triangular and the diagonal elements of $Q$ are given by $\{s_{ab}\}_{1\leq b\leq a\leq d}$. We are going to specify the values of $\{s_{ab}\}_{1\leq b\leq a\leq d}$ by $\{r_{ab}\}_{1\leq a<b\leq d}$ so that $Q$ is a matrix of degrees of freedom $\frac{1}{2}d(d-1)$. To do this, let us introduce some new notation. For each integer $a\geq 2$, $Q_{a-1}\in\mathbb{R}^{(a-1)\times (a-1)}$ is the submatrix of $Q$ collecting the first $a-1$ rows and columns. We also define
$$S_{a-1}=\begin{pmatrix}
s_{a1} \\
\vdots \\
s_{a,a-1}
\end{pmatrix},\quad R_{a-1}=\begin{pmatrix}
r_{1a} \\
\vdots \\
r_{a-1,a}
\end{pmatrix}\quad\text{and}\quad v_{a-1}=\begin{pmatrix}
r_{1,a+1}r_{a,a+1} + \cdots + r_{1d}r_{ad} \\
\vdots \\
r_{a-1,a+1}r_{a,a+1} + \cdots + r_{a-1,d}r_{ad}
\end{pmatrix}.$$
All the three vectors above belong to $\mathbb{R}^{a-1}$. The construction of $\{s_{ab}\}_{1\leq b\leq a\leq d}$ is given by the following procedure.
\begin{enumerate}
\item We first set $s_{11}=\sqrt{1-\left(r_{12}^2+\cdots+r_{1d}^2\right)}$.
\item Given the values of $\{s_{ab}\}_{1\leq b\leq a\leq k-1}$, we find $s_{k1}, s_{k2},\cdots, s_{kk}$ through the equations
\begin{eqnarray}
\label{eq:eq1} Q_{k-1}S_{k-1} + s_{kk}R_{k-1} &=& -v_{k-1} \\
\label{eq:eq2} \|S_{k-1}\|^2 + s_{kk}^2 &=& 1-\left(r_{k,k+1}^2+\cdots+r_{kd}^2\right).
\end{eqnarray}
Note that the equations above have two sets of real solutions (under the assumption of Lemma \ref{lem:Q-exist}). We take the set of solutions with the larger value of $s_{kk}$.
\end{enumerate}
After going through the above procedure, the matrix $Q$ in the form of (\ref{eq:Q-prior}) is fully determined by its upper triangular elements $\{r_{ab}\}_{1\leq a<b\leq d}$, and therefore we can write $Q=Q(r)$. The equation (\ref{eq:eq1}) guarantees that the rows of $Q(r)$ are orthogonal to each other and (\ref{eq:eq2}) implies that each row of $Q(r)$ is a unit vector. The following lemma characterizes a sufficient condition on $\{r_{ab}\}_{1\leq a<b\leq d}$ that implies the existence of $Q(r)$.
\begin{lemma}\label{lem:Q-exist}
Assume $\max_{1\leq a<b\leq d}|r_{ab}|\leq \frac{1}{8d^{5/2}}$. Then, $Q(r)$ is well defined and the following properties are satisfied:
\begin{enumerate}
\item $Q(r)\in\so(d)$;
\item $\max_{1\leq b<a\leq d}|s_{ab}|\leq \frac{1}{4d^2}$ and $\min_{a\in[d]}s_{aa}\geq\frac{7}{8}$;
\item $\max_{1\leq a<b\leq d}\max_{u\in[d]}\sqrt{\sum_{v=1}^u\left|\frac{\partial s_{uv}}{\partial r_{ab}}\right|^2}\leq 5$.
\end{enumerate} 
\end{lemma}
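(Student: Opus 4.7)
The plan is to establish well-posedness of the recursion together with the three properties simultaneously by induction on the row index $k \in [d]$. The inductive hypothesis at step $k$ carries that rows $1,\ldots,k-1$ of $Q(r)$ are well-defined unit vectors in $\mathbb R^d$ that are mutually orthogonal, the entrywise bounds in (2) hold for the already-constructed $\{s_{ab}\}$, and the row-norm derivative bounds in (3) hold for the same entries. The threshold $|r_{ab}| \leq 1/(8d^{5/2})$ is used at every step to keep the defining quadratic real, to control $\|Q_{k-1}^{-1}\|_{\rm op}$ uniformly, and to ensure the implicit-differentiation step inherits only mild corrections from previous rows.

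\medskip\noindent\textbf{Solvability and property (2).} Orthonormality of the first $k-1$ rows viewed as vectors in $\mathbb R^d$ factors as $[Q_{k-1},T_{k-1}][Q_{k-1},T_{k-1}]^{\T} = I_{k-1}$, where $T_{k-1}$ collects the upper-right $r$-entries; hence $Q_{k-1}Q_{k-1}^{\T} = I_{k-1} - T_{k-1}T_{k-1}^{\T}$, and the threshold gives $\|T_{k-1}\|_{\rm F}^2 \leq 1/(64 d^3)$, so $\|Q_{k-1}^{-1}\|_{\rm op} \leq 1 + O(1/d^3)$. Substituting $S_{k-1} = -Q_{k-1}^{-1}(v_{k-1} + s_{kk} R_{k-1})$ from (\ref{eq:eq1}) into (\ref{eq:eq2}) reduces to a scalar quadratic in $s_{kk}$; the bounds $\|v_{k-1}\| = O(1/d^{7/2})$, $\|R_{k-1}\| \leq 1/(8 d^2)$ and $\sum_{j > k} r_{kj}^2 \leq 1/(64 d^4)$ make the discriminant strictly positive and force the larger root to obey $s_{kk} \geq 7/8$. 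Back-substitution together with $|s_{kk}| \leq 1$ gives $\|S_{k-1}\|_2 \leq 1/(4 d^2)$ and hence $|s_{kb}| \leq 1/(4 d^2)$ for every $b < k$. Property (1) then follows once all $d$ rows are in place: the recursion forces $Q(r) \in \o(d)$; the larger-root rule makes $r \mapsto Q(r)$ continuous on the prescribed ball with $Q(0) = I_d$; and $\det Q(r) \in \{\pm 1\}$ continuous in $r$ forces $\det Q(r) \equiv 1$, so $Q(r) \in \so(d)$.

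\medskip\noindent\textbf{Property (3).} Implicitly differentiating (\ref{eq:eq1})--(\ref{eq:eq2}) with respect to a fixed $r_{ab}$ yields a linear system in $(\partial S_{k-1}/\partial r_{ab}, \partial s_{kk}/\partial r_{ab})$ whose coefficient matrix, after rescaling the second equation by $1/2$, is precisely the leading submatrix $Q_k$; by the same orthonormality argument applied now to rows $1,\ldots,k$, $\|Q_k^{-1}\|_{\rm op} \leq 1 + O(1/d^3)$. The right-hand side assembles four pieces: $\partial v_{k-1}/\partial r_{ab}$ and $\partial(\sum_{j > k} r_{kj}^2)/\partial r_{ab}$, each with at most one entry of size $O(1/d^{5/2})$; $s_{kk}\,\partial R_{k-1}/\partial r_{ab}$, which vanishes unless $b = k$ and otherwise has norm at most $1$; and $(\partial Q_{k-1}/\partial r_{ab}) S_{k-1}$, in which $\partial Q_{k-1}/\partial r_{ab}$ contains either the elementary matrix $E_{ab}$ (when $a,b \leq k-1$) or previously-computed $\partial s/\partial r_{ab}$ entries that are inductively bounded, but is multiplied by the small vector $S_{k-1}$ with $\|S_{k-1}\|_2 = O(1/d^2)$, making this contribution $O(1/d^{3/2})$. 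The total right-hand-side norm is therefore at most $1 + O(1/d^{3/2})$, so the solution vector has $\ell^2$-norm at most $1 + O(1/d^{3/2})$, and since this norm equals $\sqrt{\sum_{v=1}^k (\partial s_{kv}/\partial r_{ab})^2}$, the bound $\leq 5$ holds with ample slack (at $r = 0$ a direct check gives $\partial Q/\partial r_{ab}|_{r=0} = E_{ab} - E_{ba}$, the skew-symmetric tangent of $\so(d)$ at $I_d$, so the relevant row norms are $0$ or $1$).

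\medskip\noindent The main obstacle is property (3): because $Q_{k-1}$ contains previously-constructed $s$'s, $\partial Q_{k-1}/\partial r_{ab}$ couples every step of the induction to all preceding ones, so a naive bound would compound across the $d$ rows. The crucial leverage is the $\|S_{k-1}\|_2 = O(1/d^2)$ factor that damps this propagation to an $O(1/d^{3/2})$ additive correction per step; the $d^{-5/2}$ scaling in the hypothesis is calibrated precisely to produce this damping, so that a clean inductive bound closes without geometric compounding.
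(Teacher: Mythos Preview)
Your proposal is correct and follows the same inductive skeleton as the paper: induct on the row index, control $s_{\min}(Q_{k-1})$ to solve the quadratic, and then implicitly differentiate the defining equations to get the linear system with coefficient matrix $Q_k$ for the derivative vector. Two of your sub-arguments differ from the paper's in ways worth recording.

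First, to bound $\|Q_{k-1}^{-1}\|_{\rm op}$ you use the orthonormality of the first $k-1$ rows of $Q$ to write $Q_{k-1}Q_{k-1}^{\T}=I_{k-1}-T_{k-1}T_{k-1}^{\T}$, giving $s_{\min}(Q_{k-1})\geq \sqrt{1-\|T_{k-1}\|_{\rm F}^2}\geq 1-O(1/d^3)$. The paper instead compares $Q_{k-1}$ to its diagonal part using the inductive bounds $s_{aa}\geq 7/8$ and $|s_{ab}|\leq 1/(4d^2)$, obtaining the cruder $s_{\min}(Q_{k-1})\geq 3/4$. Your bound is sharper and does not use the induction hypothesis on the $s$'s at all; it buys you the cleaner conclusion that the row-derivative norm in property~(3) is $1+O(1/d^{3/2})$ rather than merely $\leq 5$.

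Second, for $Q(r)\in\so(d)$ you argue by continuity: the larger root of a quadratic with $ac<0$ (one positive, one negative root) depends smoothly on the coefficients, so $r\mapsto Q(r)$ is continuous on the connected ball with $Q(0)=I_d$, forcing $\det Q(r)\equiv 1$. The paper instead applies a determinant perturbation bound (its Lemma~\ref{lem:ipsen}) to compare $\det Q(r)$ to the determinant of its diagonal. Your argument is more elementary and avoids that auxiliary lemma; the paper's route has the minor advantage of giving a quantitative determinant bound that does not rely on connectedness of the parameter domain, but that is not needed here.
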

According to Lemma \ref{lem:Q-exist}, the constructed $Q(r)$ can also be used for deriving the minimax lower bound of $\so(d)$ synchronization. Moreover, its entries and the derivatives with respect to $\{r_{ab}\}_{1\leq a<b\leq d}$ are well controlled, which means that the matrix $Q(r)$ is smoothly parametrized by $r$. Let $P$ be a distribution under which $r_{ab}\sim \mu$ independently for all $1\leq a<b\leq d$ with some $\mu$ being a smooth probability density function supported on $\left[-\frac{1}{8d^{5/2}},\frac{1}{8d^{5/2}}\right]$. To be specific, we can set $\mu(t)\propto \exp\left(-\frac{1}{1-64d^5t^2}\right)\mathbb{I}\{|t|\leq 1/(8d^{5/2})\}$. Then, by letting $\Pi$ be the induced probability measure of $Q(r)$ with $\{r_{ab}\}_{1\leq a<b\leq d}\sim P$, we can write (\ref{eq:local-problem}) as
\begin{equation}
\inf_{\wh{T}}\int\int \mathbb{E}_Z\fnorm{\wh{T}-Z_i(r)Z_j(r')^{\T}}^2\diff P(r)\diff P(r'), \label{eq:local-problem-induced}
\end{equation}
where we have some slight abuse of notation that $Z_i=Z_i(r)=Q(r)$ and $Z_j=Z_j(r')=Q(r')$. Compared with (\ref{eq:local-problem}), the distribution $P$ in (\ref{eq:local-problem-induced}) is a standard probability measure on $\mathbb{R}^{\frac{d(d-1)}{2}}$. The Bayes risk (\ref{eq:local-problem-induced}) can then be lower bounded via van Trees' inequality \citep{gill1995applications}.
\begin{lemma}\label{lem:trees}
Assume $\frac{np}{\sigma^2}\geq c$ for some sufficiently large constants $c>0$ and $2\leq d\leq C_1$ for some constant $C_1>0$. Then, we have
$$\inf_{\wh{T}}\int\int \mathbb{E}_Z\fnorm{\wh{T}-Z_i(r)Z_j(r')^{\T}}^2\diff P(r)\diff P(r')\geq \left(1-C\left(\frac{1}{n}+\frac{\sigma^2}{np}\right)\right)\frac{\sigma^2d(d-1)}{np},$$
for some constant $C>0$ only depending on $C_1$.
\end{lemma}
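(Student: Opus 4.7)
The plan is to apply the multivariate van Trees inequality to the Bayes problem on the left-hand side, viewing $\theta = (r, r') \in \mathbb{R}^{d(d-1)}$ as the parameter drawn from the product prior $P\otimes P$ and $\psi(\theta) = \vec(Q(r)Q(r')^\T) \in \mathbb{R}^{d^2}$ as the vector-valued estimand. The inequality gives
\[
\inf_{\wh T}\int\int \E_Z\fnorm{\wh T - Q(r)Q(r')^\T}^2\,\diff P(r)\diff P(r') \;\geq\; \Tr\!\left(\bar J\,\bigl(\E[I_\theta] + \tilde I(P\otimes P)\bigr)^{-1}\,\bar J^{\T}\right),
\]
where $\bar J \in \mathbb{R}^{d^2 \times d(d-1)}$ is the expected Jacobian of $\psi$, $I_\theta$ is the Fisher information of the Gaussian observation model (a function of $A$ and $\theta$), and $\tilde I(P\otimes P)$ is the Fisher information of the prior. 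I will compute each ingredient separately and then combine them, noting that the observation model will conveniently eliminate any dependence on $\{Z_k\}_{k\neq i,j}$ thanks to the orthogonality $Z_l^\T Z_l = I_d$.

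For the Jacobian, Lemma \ref{lem:Q-exist} gives $Q(0) = I_d$ and $\partial_{r_{ab}} Q|_{r=0} = E_{ab} - E_{ba}$, the elementary skew-symmetric unit matrix. Taylor-expanding yields $T(r, r') = I_d + (R - R') + O\bigl(\max_{a<b}|r_{ab}|^2\bigr)$ on the support of $P \otimes P$, where $R$ is skew-symmetric with $R_{ab} = r_{ab}$ for $a < b$; the remainder is controlled by parts~2 and 3 of Lemma \ref{lem:Q-exist}. Consequently, the row of $\bar J$ indexed by the off-diagonal entry $T_{ab}$ (with $a<b$) is $e_{r_{ab}} - e_{r'_{ab}}$ up to additive error $O(d^{-5})$, the row indexed by $T_{ba}$ is its negative, and rows indexed by diagonal entries $T_{aa}$ have Euclidean norm $O(d^{-5/2})$ since $T_{aa}-1$ is second order in $(r,r')$. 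Therefore $\|\bar J\|_F^2 = 2d(d-1)(1 + o(1))$.

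For the Fisher information, each observation $Y_{kl}$ with $A_{kl} = 1$ contributes $\sigma^{-2}\Tr\bigl((\partial_\theta Z_kZ_l^\T)(\partial_\phi Z_kZ_l^\T)^\T\bigr)$ to entry $(\theta, \phi)$ of $I_\theta$. For edges $(i, l)$ with $l \neq j$, the orthogonality $Z_l^\T Z_l = I_d$ eliminates the dependence on $Z_l$ entirely, reducing the contribution to $\sigma^{-2}\Tr\bigl((\partial_{r_{ab}} Q)(\partial_{r_{cd}} Q)^\T\bigr) = 2\sigma^{-2}\delta_{(a,b),(c,d)}(1 + o(1))$ by an elementary computation with $E_{ab}-E_{ba}$. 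Summing over such edges and performing the analogous computation for the $r'$-block yields a block-diagonal Fisher information with diagonal blocks $\frac{2\sum_{l\neq i,j} A_{il}}{\sigma^2}I_{d(d-1)/2}$ and $\frac{2\sum_{l\neq i,j} A_{jl}}{\sigma^2}I_{d(d-1)/2}$; the sole cross-block contribution, from the edge $(i,j)$, is $O(A_{ij}/\sigma^2)$ and hence negligible. Averaging over $A$ gives $\E[I_\theta] = \frac{2(n-2)p}{\sigma^2}I_{d(d-1)}(1+o(1))$. Since $\mu$ is a fixed smooth bump density, $I(\mu)$ is a finite constant depending only on $d$, so $\tilde I(P\otimes P) = I(\mu)\cdot I_{d(d-1)}$, and hence $\bigl(\E[I_\theta]+\tilde I\bigr)^{-1} = \frac{\sigma^2}{2np}I_{d(d-1)}\bigl(1 + O(1/n + \sigma^2/(np))\bigr)$.

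Combining the three pieces gives $\Tr\bigl(\bar J(\E[I_\theta]+\tilde I)^{-1}\bar J^\T\bigr) \geq (1 - C(1/n + \sigma^2/(np)))\sigma^2 d(d-1)/(np)$, which is the claim. The main obstacle I anticipate is converting the pointwise (at $r = r' = 0$) Taylor expansions of $\bar J$ and $I_\theta$ into uniform multiplicative bounds across the entire support of $P\otimes P$ without losing leading constants; this requires combining the quantitative entry and derivative bounds in Lemma \ref{lem:Q-exist} with the prior restriction $|r_{ab}| \leq 1/(8d^{5/2})$ so that all higher-order corrections are absorbed into the advertised $O(1/n + \sigma^2/(np))$ error.
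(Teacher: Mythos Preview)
The ``main obstacle'' you flag in your last paragraph is in fact a genuine gap, and it cannot be closed by the approach you outline. The prior $P$ is supported on the fixed box $[-1/(8d^{5/2}),1/(8d^{5/2})]^{d(d-1)/2}$, which does \emph{not} shrink with $n$ or with $\sigma^2/(np)$. Consequently all of the Taylor-expansion corrections you describe --- the $O(d^{-5})$ additive errors in the rows of $\bar J$, the $(1+o(1))$ factors in the Fisher information --- are fixed constants depending only on $d\leq C_1$; they are not $o(1)$ in $n$. They therefore cannot be absorbed into the advertised $C(1/n+\sigma^2/(np))$ error. What your argument actually delivers is a bound of the form $(1-c_{C_1})\frac{\sigma^2 d(d-1)}{np}$ with a strictly positive constant $c_{C_1}$, which is weaker than the lemma.

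The paper sidesteps this entirely by never approximating either the Jacobian or the Fisher information. Writing $G_1=\partial\vec(Z_1(r))/\partial r$, $G_2=\partial\vec(Z_2(r'))/\partial r'$ and decomposing the information as $B=B_1+B_2$ with $B_2=\frac{(n-2)p}{\sigma^2}\diag(G_1G_1^{\T},G_2G_2^{\T})$ the block-diagonal part and $B_1$ the single-edge correction, the key computation is the \emph{exact} identity
\[
\Tr(FB_2^{-1}F^{\T})
=\frac{\sigma^2}{(n-2)p}\Bigl[\Tr\bigl(G_1^{\T}(G_1G_1^{\T})^{-1}G_1\bigr)+\Tr\bigl(G_2^{\T}(G_2G_2^{\T})^{-1}G_2\bigr)\Bigr]
=\frac{\sigma^2 d(d-1)}{(n-2)p},
\]
valid for \emph{every} $(r,r')$ in the support: each $G_k^{\T}(G_kG_k^{\T})^{-1}G_k$ is a projection of rank $d(d-1)/2$, and conjugation by the orthogonal matrices $Z_2\otimes I_d$, $I_d\otimes Z_1$ preserves the trace. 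Only after this exact step does the paper approximate, replacing $B^{-1}$ by $B_2^{-1}$ at cost $O(\opnorm{B_1}/s_{\min}(B_2))=O(1/n)$ and bounding the prior information term by $O((\sigma^2/(np))^2)$. Those errors do scale as claimed; your Taylor errors do not.
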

The proof of Lemma \ref{lem:trees}, which verifies the technical conditions of \cite{gill1995applications}, is given in Section \ref{sec:pf-lower}. Note that these technical conditions are implied by the conclusion of Lemma \ref{lem:Q-exist}. In view of the inequality (\ref{eq:lower-ineq2}), we immediately have the following theorem.

\begin{thm}\label{thm:lower-od}
Assume $\frac{np}{\sigma^2}\geq c$ for some sufficiently large constants $c>0$ and $2\leq d\leq C_1$ for some constant $C_1>0$. Then, we have
$$\inf_{\wh{Z}\in\o(d)^n}\sup_{Z\in\o(d)^n}\mathbb{E}_Z\ell(\wh{Z},Z)\geq \left(1-C\left(\frac{1}{n}+\frac{\sigma^2}{np}\right)\right)\frac{\sigma^2d(d-1)}{2np},$$
for some constant $C>0$ only depending on $C_1$.
\end{thm}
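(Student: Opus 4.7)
The proof will be essentially a bookkeeping exercise that chains together the reductions and the van Trees lower bound already developed in this section. The plan is to combine inequality (\ref{eq:lower-ineq2}) with Lemma \ref{lem:trees} and then collect the arithmetic prefactors.

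First, I would apply the pairwise-loss inequality (\ref{eq:lower-step1}) to replace $\ell(\wh Z, Z)$ by $\frac{1}{2n^2}\sum_{i,j}\fnorm{\wh Z_i\wh Z_j^{\T}-Z_iZ_j^{\T}}^2$, which converts the globally coupled loss into a sum of pairwise terms amenable to individual lower bounds. Next, I would execute the minimax-to-Bayes reduction exactly as laid out in (\ref{eq:lower-ineq1})--(\ref{eq:lower-ineq2}), averaging over the product prior $\prod_k \diff\Pi(Z_k)$, where $\Pi$ is the pushforward measure of $P$ under the map $r\mapsto Q(r)$ from Lemma \ref{lem:Q-exist}. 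The $i=j$ diagonal terms contribute zero and are dropped, and the infimum is pushed inside the outer integral over $\{Z_k\}_{k\notin\{i,j\}}$, which only reduces the quantity being bounded.

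Second, I would invoke Lemma \ref{lem:trees} on each of the $n(n-1)$ local problems indexed by $i\neq j$. Since the bound $\bigl(1-C(\frac{1}{n}+\frac{\sigma^2}{np})\bigr)\frac{\sigma^2 d(d-1)}{np}$ from that lemma depends only on $n,p,\sigma^2,d$ and is uniform in the conditioning values $\{Z_k\}_{k\notin\{i,j\}}$, it survives the outer integration $\prod_{k\notin\{i,j\}}\diff\Pi(Z_k)$ intact. Summing the identical per-pair bound over the $n(n-1)$ ordered pairs $i\neq j$ and dividing by $2n^2$ introduces a prefactor $\frac{n(n-1)}{2n^2}=\frac{1}{2}(1-1/n)$, and the missing $\frac{1}{2}$ (relative to Lemma \ref{lem:trees}'s rate $\frac{\sigma^2 d(d-1)}{np}$) combines with it to give $\frac{\sigma^2 d(d-1)}{2np}$ times $(1-1/n)\bigl(1-C(\frac{1}{n}+\frac{\sigma^2}{np})\bigr)$. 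Absorbing $(1-1/n)$ into an enlarged constant inside the bracket yields exactly the stated lower bound.

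There is essentially no substantive obstacle: all the real work — the smooth parametrization of $\so(d)$ via $Q(r)$, the uniform control on $s_{uv}$ and its derivatives, and the verification of the regularity conditions needed to apply van Trees' inequality — has been packaged into Lemmas \ref{lem:Q-exist} and \ref{lem:trees}. The only minor point worth noting is that although $Q(r)$ takes values in $\so(d)$, the prior $\Pi$ is perfectly valid for the $\o(d)$ lower bound because $\so(d)\subset\o(d)$, so the supremum $\sup_{Z\in\o(d)^n}$ dominates the corresponding Bayes risk over $\Pi^{\otimes n}$. This same construction will therefore also yield the $\so(d)$ lower bound in Theorem \ref{thm:lower-sod} with only notational changes.
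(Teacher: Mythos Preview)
Your proposal is correct and follows essentially the same approach as the paper: the paper's entire argument for Theorem~\ref{thm:lower-od} is just the sentence ``In view of the inequality (\ref{eq:lower-ineq2}), we immediately have the following theorem,'' i.e., plug Lemma~\ref{lem:trees} into (\ref{eq:lower-ineq2}) and collect the $\frac{n(n-1)}{2n^2}$ prefactor, exactly as you describe. Your remark that $\supp(\Pi)\subset\so(d)\subset\o(d)$ justifies using the same prior for both Theorems~\ref{thm:lower-od} and~\ref{thm:lower-sod} is also the paper's reasoning.
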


The derivation of the minimax lower bound for $\so(d)$ synchronization follows the same approach. According to Lemma \ref{lem:loss-re}, the inequality (\ref{eq:lower-step1}) also holds for the loss $\bar{\ell}(\wh{Z},Z)$. Moreover, by Lemma \ref{lem:Q-exist}, the construction of $Q(r)$ is already in $\so(d)$. Then, an analogous result to Lemma \ref{lem:trees} also holds with some straightforward modification. This leads to the following theorem.

\begin{thm}\label{thm:lower-sod}
Assume $\frac{np}{\sigma^2}\geq c$ for some sufficiently large constants $c>0$ and $2\leq d\leq C_1$ for some constant $C_1>0$. Then, we have
$$\inf_{\wh{Z}\in\so(d)^n}\sup_{Z\in\so(d)^n}\mathbb{E}_Z\bar{\ell}(\wh{Z},Z)\geq \left(1-C\left(\frac{1}{n}+\frac{\sigma^2}{np}\right)\right)\frac{\sigma^2d(d-1)}{2np},$$
for some constant $C>0$ only depending on $C_1$.
\end{thm}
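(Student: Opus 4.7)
The plan is to transcribe the proof of Theorem \ref{thm:lower-od} with the key observation that the prior constructed via Lemma \ref{lem:Q-exist} is already supported on $\so(d)$, so essentially no modification is needed. First, I would invoke Lemma \ref{lem:loss-re}, which the excerpt asserts also yields the pairwise decomposition for $\bar{\ell}$:
$$\bar{\ell}(\wh{Z},Z) \geq \frac{1}{2n^2}\sum_{i=1}^n\sum_{j=1}^n\fnorm{\wh{Z}_i\wh{Z}_j^{\T}-Z_iZ_j^{\T}}^2.$$
The justification is identical to the $\o(d)$ case: expanding $\fnorm{\wh{Z}_i-Z_iB}^2$ and averaging while exploiting $Z_jZ_j^{\T}=I_d$ produces the pairwise bound regardless of whether $B$ ranges over $\o(d)$ or $\so(d)$, since the quantities $\wh{Z}_i\wh{Z}_j^{\T}$ and $Z_iZ_j^{\T}$ are invariant under a common right-multiplication by any orthogonal matrix. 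Repeating the Bayes-risk reduction in (\ref{eq:lower-ineq1})-(\ref{eq:lower-ineq2}) then reduces the problem to lower bounding the pairwise Bayes risk
$$\inf_{\wh{T}}\int\int \mathbb{E}_Z\fnorm{\wh{T}-Z_i(r)Z_j(r')^{\T}}\,\diff P(r)\,\diff P(r')$$
for each $i\neq j$.

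Second, I would take the same prior $P$ used for Theorem \ref{thm:lower-od}: independent coordinates $r_{ab}\sim\mu$ on $[-1/(8d^{5/2}),1/(8d^{5/2})]$, pushed forward by the smooth map $r\mapsto Q(r)$. The essential point is that Lemma \ref{lem:Q-exist}(1) guarantees $Q(r)\in\so(d)$, so the induced prior $\Pi$ is already supported on $\so(d)^n$. Since the supremum in the $\so(d)$ minimax risk ranges over exactly this set, the Bayes lower bound is valid here without any redesign of the prior---indeed the $\o(d)$ proof implicitly used a prior on the smaller set $\so(d)$ as well.

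Third, I would apply the analog of Lemma \ref{lem:trees} verbatim. The van Trees argument depends only on the smoothness and boundedness of $Q(r)$ and its derivatives with respect to $r$, supplied by Lemma \ref{lem:Q-exist}(2)-(3), and on the Gaussian likelihood of the observations conditional on the graph $\{A_{ij}\}$. Since the infimum is over all estimators $\wh{T}$ without any constraint, the $\o(d)$-versus-$\so(d)$ distinction plays no role, and the pairwise lower bound $(1-C(n^{-1}+\sigma^2/(np)))\sigma^2d(d-1)/(np)$ carries over unchanged. Aggregating via (\ref{eq:lower-ineq2}) together with the factor of $1/2$ from the pairwise reduction delivers the claimed rate $\sigma^2d(d-1)/(2np)$.

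I do not foresee any substantive obstacle. The only point that requires genuine care is the assertion that Lemma \ref{lem:loss-re} goes through for $\bar{\ell}$ with the same constant, which reduces to the routine invariance observation above. Everything else is a direct transcription of the $\o(d)$ lower bound, with the single substantive content being that the constructed $Q(r)$ already lies in $\so(d)$, eliminating the need for an additional determinant-fixing step.
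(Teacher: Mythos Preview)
Your proposal is correct and matches the paper's own proof essentially line for line: the paper invokes Lemma \ref{lem:loss-re} (which explicitly contains the $\bar{\ell}$ version (\ref{eq:loss2-re})), repeats the Bayes reduction to (\ref{eq:lower-ineq2}), observes that $\supp(\Pi)\subset\so(d)$ by Lemma \ref{lem:Q-exist}, and then applies Lemma \ref{lem:trees} unchanged. The only slip is a missing square on $\fnorm{\wh{T}-Z_i(r)Z_j(r')^{\T}}$ in your displayed pairwise Bayes risk, which is clearly a typo.
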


\section{Proofs}\label{sec:pf}

\subsection{Some Auxiliary Lemmas}

\begin{lemma}\label{lem:ER-graph}
Assume $\frac{np}{\log n}>c$ for some sufficiently large constant $c>0$. Then, we have
$$\max_{i\in[n]}\left(\sum_{j\in[n]\backslash\{i\}}(A_{ij}-p)\right)^2\leq Cnp\log n,$$
and
$$\opnorm{A-\mathbb{E}A}\leq C\sqrt{np},$$
with probability at least $1-n^{-10}$ for some constant $C>0$.
\end{lemma}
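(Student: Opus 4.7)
The lemma has two parts; I would handle them separately, as they rely on different concentration tools, and both should follow without much effort once the right inequality is identified.

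For the first bound, I would fix $i\in[n]$ and use that the off-diagonal entries $\{A_{ij}\}_{j\in[n]\setminus\{i\}}$ are mutually independent $\text{Bernoulli}(p)$ random variables (each corresponding to a distinct edge of the graph). Setting $S_i = \sum_{j\neq i}(A_{ij}-p)$, Bernstein's inequality gives
\[
\Prob(|S_i|\geq t) \leq 2\exp\!\left(-\tfrac{t^2/2}{(n-1)p(1-p) + t/3}\right).
\]
Taking $t = c_0\sqrt{np\log n}$, the hypothesis $np \geq c\log n$ with $c$ large forces the additive $t/3$ to be negligible compared to $np$, so the exponent is at least $(c_0^2/4)\log n$. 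Choosing $c_0$ sufficiently large makes this exceed $12\log n$, so each $i$ fails with probability at most $2n^{-12}$. A union bound over $i\in[n]$ and squaring give the first inequality with probability at least $1-2n^{-11}$.

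The second bound is the classical spectral concentration of the centered adjacency matrix of $G(n,p)$ in the regime $np\gtrsim \log n$. The natural first attempt is matrix Bernstein: writing $A-\E A = \sum_{i<j}(A_{ij}-p)(e_ie_j^{\T}+e_je_i^{\T})$, the summands are uniformly bounded in operator norm by $1$, and the matrix variance proxy equals $(n-1)p(1-p)I_n$, so $\sigma^2 \leq np$. Unfortunately, matrix Bernstein on an $n\times n$ matrix loses an extra $\sqrt{\log n}$ factor and only delivers $\opnorm{A-\E A}\lesssim \sqrt{np\log n}$. To obtain the sharper $\sqrt{np}$ rate stated in the lemma, I would invoke a standard sharp concentration result for Erd\H{o}s--R\'enyi adjacency matrices (for instance, the Feige--Ofek / Lei--Rinaldo / Bandeira--van Handel intrinsic-dimension bound), which gives $\opnorm{A-\E A}\leq C\sqrt{np}$ with probability at least $1-n^{-10}$ provided $np\geq c\log n$ for $c$ large. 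Combining the two high-probability events via a union bound yields the full statement.

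The only substantive obstacle is the sharp constant in part two: a naive matrix Bernstein argument is not sharp enough, and one must either invoke the refined non-commutative Bernstein inequality that exploits the intrinsic dimension of $A-\E A$ or cite the classical combinatorial/trace-method result for $G(n,p)$. Everything else is routine bookkeeping.
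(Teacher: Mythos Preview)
Your proposal is correct and matches the paper's own proof essentially line for line: the paper also obtains the first bound by Bernstein's inequality plus a union bound, and for the second bound it simply cites Theorem~5.2 of Lei--Rinaldo (one of the exact references you list) rather than attempting matrix Bernstein. Your discussion of why matrix Bernstein falls short by a $\sqrt{\log n}$ factor is a helpful elaboration, but the underlying strategy is identical.
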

\begin{proof}
The first result is a direct application of union bound and Bernstein's inequality. The second result is Theorem 5.2 of \cite{lei2015consistency}. 
\end{proof}

\begin{lemma}\label{lem:bandeira}
Assume $\frac{np}{\log n}>c$ for some sufficiently large constant $c>0$. Then, we have
$$\opnorm{(A\otimes \mathds{1}_d\mathds{1}_d^T)\circ W}\leq C\sqrt{dnp},$$
with probability at least $1-n^{-10}$ for some constant $C>0$.
\end{lemma}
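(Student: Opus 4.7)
The plan is to condition on the random graph $A$ and then apply a Gaussian concentration inequality for the operator norm of $M := (A\otimes \mathds{1}_d\mathds{1}_d^T)\circ W$. Indexing rows and columns of $M$ by pairs $(i,a)\in [n]\times [d]$, the nonzero entries of $M$ are of the form $A_{ij}(W_{ij})_{ab}$, each a centered Gaussian with variance $A_{ij}^2\in\{0,1\}$; the only dependence among them is the block-pairing induced by $W_{ji}=W_{ij}$, which identifies each source entry with two positions of $M$. This is the usual Wigner-type symmetry for which sharp Gaussian matrix bounds apply.

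I would first control the row sums of $A$: by Lemma~\ref{lem:ER-graph} together with $\mathbb{E}\sum_{j\in[n]\setminus\{i\}}A_{ij}=(n-1)p$, there is an event $\mathcal{E}_A$ with $\mathbb{P}(\mathcal{E}_A)\geq 1-n^{-10}$ on which $\max_{i\in[n]}\sum_{j\in[n]\setminus\{i\}}A_{ij}\leq C_0 np$, under the assumption $np\gtrsim \log n$. On $\mathcal{E}_A$, every row of $M$ has its sum of conditional variances bounded by $d\cdot C_0 np$, and every entrywise conditional variance is at most $1$.

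Next I would invoke the Bandeira--Van Handel inequality (or an equivalent noncommutative Khintchine-type bound) for a Gaussian matrix with independent entries to obtain, conditional on $A\in\mathcal{E}_A$,
\[
\mathbb{E}[\opnorm{M}\mid A]\;\lesssim\;\max_{(i,a)}\sqrt{\textstyle\sum_{(j,b)}\Var(M_{(i,a),(j,b)}\mid A)}\;+\;\sqrt{\log(nd)}\;\leq\; C_1\bigl(\sqrt{dnp}+\sqrt{\log(nd)}\bigr).
\]
Since $np\gtrsim \log n$ and $d=O(1)$, the right-hand side is at most $C_2\sqrt{dnp}$. To pass from expectation to high probability, I would use Gaussian concentration: because $\opnorm{\cdot}\leq\fnorm{\cdot}$ and each source entry of $W$ enters at most two positions of $M$, the map from the source Gaussian vector to $\opnorm{M}$ is $\sqrt{2}$-Lipschitz in the Euclidean norm, so $\mathbb{P}(\opnorm{M}\geq \mathbb{E}[\opnorm{M}\mid A]+t\mid A)\leq 2e^{-t^2/4}$. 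Taking $t\asymp\sqrt{\log n}$ absorbs the deviation into $C\sqrt{dnp}$, and a union bound with $\mathbb{P}(\mathcal{E}_A^c)\leq n^{-10}$ completes the argument.

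The main obstacle is really just the bookkeeping of variance parameters under the block-symmetric pairing: one must check that the maximum row-variance and the maximum entrywise variance are the correct quantities required by the Bandeira--Van Handel bound despite the Wigner-type coupling. This is routine---the pairing alters neither of these quantities---so no genuinely new ingredient beyond a standard Gaussian matrix deviation bound is needed.
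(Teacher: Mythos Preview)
Your proposal is correct and follows essentially the same approach as the paper: condition on $A$, restrict to the high-probability event where all row degrees are $O(np)$, and then apply the Bandeira--Van Handel bound (the paper cites Corollary~3.9 of \cite{bandeira2016sharp}, which packages the expectation bound and the Gaussian Lipschitz concentration you spell out into a single tail inequality). The block-pairing concern you raise is indeed routine: with the symmetric convention $W_{ji}=W_{ij}^{\T}$ the big matrix $(A\otimes\mathds{1}_d\mathds{1}_d^{\T})\circ W$ is symmetric with independent above-diagonal entries, so the standard hypothesis is met verbatim.
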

\begin{proof}
We use $\mathbb{P}_A$ for the conditional probability $\mathbb{P}(\cdot|A)$. Define the event
$$\mathcal{A}=\left\{\max_{i\in[n]}\sum_{j\in[n]\backslash\{i\}}A_{ij}\leq 2np\right\}.$$
Under the assumption $\frac{np}{\log n}>c$, we have $\mathbb{P}(\mathcal{A}^c)\leq n^{-11}$ by Bernstein's inequality and a union bound argument. It is clear that the largest row $\ell_2$ norm of $A\otimes \mathds{1}_d\mathds{1}_d^T$ is bounded by $\sqrt{2dnp}$ under the event $\mathcal{A}$.
By Corollary 3.9 of \cite{bandeira2016sharp}, we have
$$\sup_{A\in\mathcal{A}}\mathbb{P}_A\left(\opnorm{(A\otimes \mathds{1}_d\mathds{1}_d^T)\circ W} > C_1\sqrt{dnp} + t\right)\leq e^{-t^2/4},$$
for some constant $C_1>0$. This implies that $\sup_{A\in\mathcal{A}}\mathbb{P}_A\left(\opnorm{(A\otimes \mathds{1}_d\mathds{1}_d^T)\circ W} > C_2\sqrt{dnp}\right)\leq n^{-11}$ for some constant $C_2>0$. Thus, we have
\begin{eqnarray*}
&& \mathbb{P}\left(\opnorm{(A\otimes \mathds{1}_d\mathds{1}_d^T)\circ W} > C_2\sqrt{dnp}\right) \\
&\leq& \mathbb{P}(\mathcal{A}^c)+\sup_{A\in\mathcal{A}}\mathbb{P}_A\left(\opnorm{(A\otimes \mathds{1}_d\mathds{1}_d^T)\circ W} > C_2\sqrt{dnp}\right) \\
&\leq& 2n^{-11},
\end{eqnarray*}
which implies the desired result.
\end{proof}

\begin{lemma}\label{lem:talagrand}
Assume $\frac{np}{\log n}>c$ for some sufficiently large constant $c>0$. Consider independent random matrices $X_{ij}\sim\mathcal{MN}(0,I_d,I_d)$ for $1\leq i<j\leq n$. Write $X_{ji}=X_{ij}$ for $1\leq i<j\leq n$. Then, we have
$$\sum_{i=1}^n\fnorm{\sum_{j\in[n]\backslash\{i\}}A_{ij}\left(X_{ij}-X_{ji}\right)}^2\leq 2d(d-1)n(n-1)p+C\left(d^2\sqrt{n^2p\log n}+d\sqrt{n^3p^2\log n}\right),$$
and
$$\sum_{i=1}^n\fnorm{\sum_{j\in[n]\backslash\{i\}}A_{ij}X_{ij}}^2\leq d^2n(n-1)p+C\left(d^2\sqrt{n^2p\log n}+d\sqrt{n^3p^2\log n}\right),$$
with probability at least $1-n^{-10}$ for some constant $C>0$.
\end{lemma}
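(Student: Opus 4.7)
The strategy is the same for both bounds: condition on the random graph $A$, rewrite the sum as a centred Gaussian quadratic form, apply the Hanson--Wright inequality (equivalently a Gaussian chaos / Talagrand bound), and then convert the conditional mean into the unconditional mean using Bernstein and Lemma~\ref{lem:ER-graph}.

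For the first bound I would interpret $X_{ij}-X_{ji}$ as an antisymmetric Gaussian matrix, so that its diagonal vanishes and its $\binom{d}{2}$ super-diagonal entries are independent $N(0,2)$. For each pair $a<b$, let $Y^{(ab)}\in\mathbb{R}^{\binom{n}{2}}$ collect these super-diagonal entries over $i<j$; a direct calculation gives $\bigl(\sum_{j\neq i}A_{ij}(X_{ij}-X_{ji})\bigr)_{ab}=(BY^{(ab)})_i$, where $B\in\mathbb{R}^{n\times\binom{n}{2}}$ is the signed edge-incidence matrix of the random graph. Summing squares, the left-hand side of the first bound rewrites as a Gaussian quadratic form with matrix $I_{\binom{d}{2}}\otimes(B^{\T}B)$ on $\binom{d}{2}\binom{n}{2}$ iid $N(0,1)$ coordinates (after standardisation). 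Its conditional mean equals $4d(d-1)\sum_{i<j}A_{ij}$, which Bernstein concentrates around $2d(d-1)n(n-1)p$ with fluctuation of order $d^2\sqrt{n^2p\log n}$, supplying the first error term.

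For the Gaussian fluctuation around the conditional mean I would apply Hanson--Wright with $t\asymp\log n$, using
\[
\|B^{\T}B\|_{\mathrm{op}}=\|BB^{\T}\|_{\mathrm{op}}=\|D-A\|_{\mathrm{op}}\lesssim np
\]
on the event of Lemma~\ref{lem:ER-graph}, together with the Frobenius bound
\[
\|B^{\T}B\|_{\mathrm{F}}^{2}\lesssim\sum_{i<j}A_{ij}+\sum_{i}d_i^{2}\lesssim n^2p+n^3p^2
\]
on the same high-probability event (Bernstein on the individual degrees, plus Lemma~\ref{lem:ER-graph}). The Kronecker structure leaves the operator norm unchanged and multiplies the Frobenius norm only by $\sqrt{\binom{d}{2}}$, so Hanson--Wright yields a fluctuation $\lesssim d\sqrt{n^3p^2\log n}+d\sqrt{n^2p\log n}+np\log n$, all absorbed into the stated $d\sqrt{n^3p^2\log n}$ as soon as $np\gtrsim\log n$. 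The second bound follows by the same template with cosmetic changes: $\sum_jA_{ij}X_{ij}$ has all $d^2$ entries independent $N(0,\sum_jA_{ij})$, so the quadratic form has matrix $I_{d^2}\otimes(\widetilde B^{\T}\widetilde B)$ for the unsigned incidence matrix $\widetilde B$ with $\widetilde B\widetilde B^{\T}=D+A$, still $O(np)$ in operator norm and with the same Frobenius estimate; the conditional mean becomes $d^2\sum_{i\neq j}A_{ij}$, giving the $d^2n(n-1)p$ main term and identical fluctuation rates.

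\textbf{Main obstacle.} The delicate point is preserving the sharp $d$-dependence, since a naive union bound over the $\binom{d}{2}$ (resp.\ $d^2$) coordinate-wise quadratic forms would cost an extra $\sqrt{d}$ in the Frobenius contribution. Tensorising as $I\otimes(B^{\T}B)$ before applying Hanson--Wright is exactly what keeps the operator-norm term free of any extra $d$ and the Frobenius term at $d$ rather than $d^{3/2}$. The most technical ingredient is the Frobenius estimate on $B^{\T}B$: expanding pair-by-pair produces edge pairs sharing a common vertex, which forces $\sum_id_i^{2}$ to appear and requires a Bernstein-plus-union-bound control of the degree sequence on top of Lemma~\ref{lem:ER-graph}.
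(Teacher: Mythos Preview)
Your approach is correct but genuinely different from the paper's. The paper does not write the sum as a quadratic form or invoke Hanson--Wright at all; instead it works with the square root
\[
g(X)=\Bigl(\sum_{i=1}^n\bigl\|\textstyle\sum_{j\neq i}A_{ij}(X_{ij}-X_{ji})\bigr\|_{\mathrm F}^2\Bigr)^{1/2},
\]
shows by Cauchy--Schwarz that $g$ is $4\sqrt{np}$-Lipschitz in the Gaussian inputs $\{X_{ij}\}_{i<j}$ on the event $\{\max_i d_i\le 2np\}$, applies Gaussian Lipschitz concentration to get $g(X)\le \mathbb{E}[g(X)\mid A]+C\sqrt{np\log n}$, bounds the conditional mean by Jensen as $\sqrt{2d(d-1)\sum_{i\neq j}A_{ij}}$, and finally squares. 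Your route via the signed/unsigned incidence matrix and Hanson--Wright recovers the same two error terms, but at the cost of an extra ingredient: you need $\|B^{\T}B\|_{\mathrm F}^2\lesssim\sum_i d_i^2\lesssim n^3p^2$ on top of the degree bound, whereas the Lipschitz argument only needs $\max_i d_i\lesssim np$. Conversely, your approach makes the quadratic structure explicit and would generalise more readily to sub-gaussian (non-Gaussian) noise, where the Lipschitz inequality is unavailable but Hanson--Wright still applies. One small imprecision: for the second bound the transpose convention mixes the $(a,b)$ and $(b,a)$ Gaussian coordinates across the two endpoints of each edge, so the matrix is not literally $I_{d^2}\otimes\widetilde B^{\T}\widetilde B$; however, a short calculation shows the resulting matrix still has operator norm $O(np)$ and Frobenius norm $O(d\sqrt{n^3p^2})$, so your estimates go through unchanged.
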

\begin{proof}
We use $\mathbb{P}_A$ for the conditional probability $\mathbb{P}(\cdot|A)$. Define the event
$$\mathcal{A}=\left\{\max_{i\in[n]}\sum_{j\in[n]\backslash\{i\}}A_{ij}\leq 2np, \sum_{i=1}^n\sum_{j\in[n]\backslash\{i\}}A_{ij}\leq n(n-1)p+5\sqrt{n^2p\log n}\right\}.$$
Under the assumption $\frac{np}{\log n}>c$, we have $\mathbb{P}(\mathcal{A}^c)\leq n^{-11}$ by Bernstein's inequality and a union bound argument.
Define
$$g(X)=\sqrt{\sum_{i=1}^n\fnorm{\sum_{j\in[n]\backslash\{i\}}A_{ij}\left(X_{ij}-X_{ji}\right)}^2}.$$
Then, for any $A\in\mathcal{A}$ and any $X'$ and $X''$, we have
\begin{eqnarray}
\label{eq:lip1} |g(X')-g(X'')| &\leq& \sqrt{\sum_{i=1}^n\fnorm{\sum_{j\in[n]\backslash\{i\}}A_{ij}\left(X_{ij}'-X_{ij}''-X_{ji}'+X_{ji}''\right)}^2} \\
\label{eq:lip2} &\leq& \sqrt{\sum_{i=1}^n\left(\sum_{j\in[n]\backslash\{i\}}A_{ij}\right)\left(\sum_{j\in[n]\backslash\{i\}}\fnorm{X_{ij}'-X_{ij}''-X_{ji}'+X_{ji}''}^2\right)} \\
\nonumber &\leq& \sqrt{2np}\sqrt{\sum_{i=1}^n\sum_{j\in[n]\backslash\{i\}}\fnorm{X_{ij}'-X_{ij}''-X_{ji}'+X_{ji}''}^2} \\
\label{eq:lip3} &\leq& 2\sqrt{2np}\sqrt{\sum_{i=1}^n\sum_{j\in[n]\backslash\{i\}}\fnorm{X_{ij}'-X_{ij}''}^2} \\
\nonumber &=& 4\sqrt{np}\sqrt{\sum_{i=1}^n\sum_{j\in[n]:j>i}\fnorm{X_{ij}'-X_{ij}''}^2}.
\end{eqnarray}
The bounds (\ref{eq:lip1}) and (\ref{eq:lip3}) are due to triangle inequality. The inequality (\ref{eq:lip2}) can be viewed as a generalization of Cauchy-Schwarz, since
\begin{eqnarray*}
&& \fnorm{\sum_{j\in[n]\backslash\{i\}}A_{ij}\left(X_{ij}'-X_{ij}''-X_{ji}'+X_{ji}''\right)}^2 \\
&=& \sup_{K\in\mathbb{R}^{d\times d}:\fnorm{K}=1}\left|\iprod{K}{\sum_{j\in[n]\backslash\{i\}}A_{ij}\left(X_{ij}'-X_{ij}''-X_{ji}'+X_{ji}''\right)}\right|^2 \\
&=& \sup_{K\in\mathbb{R}^{d\times d}:\fnorm{K}=1}\left|\sum_{j\in[n]\backslash\{i\}}A_{ij}\iprod{K}{X_{ij}'-X_{ij}''-X_{ji}'+X_{ji}''}\right|^2 \\
&\leq& \sup_{K\in\mathbb{R}^{d\times d}:\fnorm{K}=1}\left(\sum_{j\in[n]\backslash\{i\}}A_{ij}\right)\left(\sum_{j\in[n]\backslash\{i\}}\iprod{K}{X_{ij}'-X_{ij}''-X_{ji}'+X_{ji}''}^2\right) \\
&\leq& \left(\sum_{j\in[n]\backslash\{i\}}A_{ij}\right)\left(\sum_{j\in[n]\backslash\{i\}}\fnorm{X_{ij}'-X_{ij}''-X_{ji}'+X_{ji}''}^2\right).
\end{eqnarray*}
To summarize, we have shown that $g(X)$ is Lipschitz with respect to $\{X_{ij}\}_{1\leq i<j\leq n}$, and the Lipschitz constant is bounded by $4\sqrt{np}$. By a standard Gaussian concentration inequality for Lipschitz functions \citep{talagrand1995concentration}, we have
\begin{eqnarray*}
&& \mathbb{P}\left(|g(X)-\mathbb{E}(g(X)|A)|>t\sqrt{np}\right) \\
&\leq& \mathbb{P}(\mathcal{A}^c) + \sup_{A\in\mathcal{A}}\mathbb{P}_A\left(|g(X)-\mathbb{E}(g(X)|A)|>t\sqrt{np}\right) \\
&\leq& n^{-11} + 2\exp(-C_1t^2),
\end{eqnarray*}
for some constant $C_1>0$. Therefore, by choosing $t=C_2\sqrt{\log n}$ for some constant $C_2>0$, we have
$$g(X)\leq \mathbb{E}(g(X)|A)+C_2\sqrt{np\log n},$$
with probability at least $1-2n^{-11}$. In addition, we have
\begin{eqnarray*}
\mathbb{E}(g(X)|A) &\leq& \sqrt{\sum_{i=1}^n\mathbb{E}\left(\fnorm{\sum_{j\in[n]\backslash\{i\}}A_{ij}\left(X_{ij}-X_{ji}\right)}^2\Bigg|A\right)} \\
&=& \sqrt{2d(d-1)\sum_{i=1}^n\sum_{j\in[n]\backslash\{i\}}A_{ij}} \\
&\leq& \sqrt{2d(d-1)\left(n(n-1)p+5\sqrt{n^2p\log n}\right)},
\end{eqnarray*}
for any $A\in\mathcal{A}$. Thus,
$$g(X)\leq \sqrt{2d(d-1)\left(n(n-1)p+5\sqrt{n^2p\log n}\right)}+C_2\sqrt{np\log n},$$
with probability at least $1-3n^{-11}$, and the first desired bound is implied by squaring both sides of the above inequality. The second bound can be proved by a similar argument, and we omit the details.
\end{proof}

\begin{lemma}\label{lem:gao}
Consider independent $X_j\sim\mathcal{MN}(0,I_d,I_d)$ and $E_j\sim\text{Bernoulli}(p)$. Then,
$$\mathbb{P}\left(\opnorm{\sum_{j=1}^nE_jX_j}/p>t\right)\leq 81^d\exp\left(-\min\left(\frac{pt^2}{144n},\frac{pt}{6}\right)\right),$$
for any $t>0$.
\end{lemma}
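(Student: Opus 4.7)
The plan is to combine a standard $\epsilon$-net reduction with a Chernoff-style bound on the moment generating function (MGF) of a Bernoulli--Gaussian sum. First, I would reduce the operator norm of $M := \sum_{j=1}^n E_j X_j$ to a supremum over a discrete set. Since $\opnorm{M} = \sup_{u,v \in S^{d-1}} u^\T M v$ (the sign can always be flipped by replacing $u$ with $-u$), I would pick a $\tfrac{1}{4}$-net $\mathcal{N}$ of $S^{d-1}$ of cardinality at most $9^d$. The standard rectangular-matrix net bound gives $\opnorm{M} \le 2\sup_{u,v \in \mathcal{N}} u^\T M v$. A union bound over the $|\mathcal{N}|^2 \le 81^d$ pairs then reduces the problem to controlling $\mathbb{P}(u^\T M v > pt/2)$ for a single fixed pair of unit vectors $u,v$.

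Next, I would exploit the conditional Gaussianity: for any fixed unit vectors $u,v$, the variables $Y_j := u^\T X_j v$ are i.i.d.\ $N(0,1)$ and independent of $\{E_j\}$, so $u^\T M v = \sum_{j=1}^n E_j Y_j$ has MGF
\begin{equation*}
\mathbb{E}\sbr{e^{\lambda \sum_j E_j Y_j}} = \prod_{j=1}^n \pth{(1-p) + p\, e^{\lambda^2/2}} \le \exp\!\pth{np(e^{\lambda^2/2}-1)},
\end{equation*}
using $1+x \le e^x$. For $\lambda \in [0,1]$ I can apply the elementary inequality $e^x-1 \le 2x$ on $x \in [0,1/2]$ to conclude $e^{\lambda^2/2}-1 \le \lambda^2$, so the MGF is bounded by $\exp(np\lambda^2)$ on $[0,1]$.

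Finally, I would apply Markov's inequality with an optimized $\lambda$ and combine the two regimes. For a target level $s = pt/2$, choosing $\lambda = \min(s/(2np), 1)$ yields a Bernstein-type bound
\begin{equation*}
\mathbb{P}\!\pth{\textstyle\sum_j E_j Y_j > s} \le \exp\!\pth{-\min\!\pth{\tfrac{s^2}{c_1 np},\, \tfrac{s}{c_2}}}
\end{equation*}
for explicit constants $c_1,c_2$: in the subgaussian regime $s \le 2np$ the optimal $\lambda = s/(2np)$ gives an exponent $-s^2/(4np)$, and in the tail regime $s > 2np$ setting $\lambda = 1$ yields exponent $-s + np \le -s/2$. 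Substituting $s = pt/2$ and multiplying by the net cardinality $81^d$ produces a bound of the form
\begin{equation*}
\mathbb{P}\!\pth{\opnorm{M}/p > t} \le 81^d \exp\!\pth{-\min\!\pth{\tfrac{pt^2}{c n}, \tfrac{pt}{c'}}},
\end{equation*}
and a slight loosening of $c,c'$ (to absorb the net-approximation factor and the symmetry factor of $2$ into the exponent rather than the prefactor) recovers exactly the stated constants $144$ and $6$.

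The argument has no genuine obstacle; it is a textbook combination of a covering-number bound with a Chernoff argument. The only care required is bookkeeping: choosing the net radius so that $|\mathcal{N}|^2 \le 81^d$ matches the prefactor, and choosing the transition point between the subgaussian and subexponential branches so that the looser constants $144$ and $6$ accommodate the factor-of-$2$ loss from the net approximation $\opnorm{M} \le 2\sup_{\mathcal{N}} u^\T M v$ and the factor $1/4$ lost from scaling $s = pt/2$.
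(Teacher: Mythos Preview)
Your proposal is correct and follows essentially the same route as the paper: an $\epsilon$-net reduction on $S^{d-1}$ (cardinality $\le 9^d$, giving the $81^d$ prefactor after the union bound over pairs) followed by a Bernstein-type tail bound for the scalar Bernoulli--Gaussian sum $\sum_j E_j Y_j$. The only cosmetic difference is that the paper invokes the scalar tail bound as a black box (Lemma~13 of \cite{gao2016optimal}) and uses a net factor of $3$ rather than your $2$, which is why the constants $144$ and $6$ are looser than what your MGF calculation actually delivers.
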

\begin{proof}
Apply a standard discretization trick \citep{vershynin2010introduction}, and there exists a subset $\mathcal{U}\subset\{u\in\mathbb{R}^d:\|u\|=1\}$ with cardinality bound $|\mathcal{U}|\leq 9^d$ that satisfies
$$\opnorm{\sum_{j=1}^nE_jX_j}\leq 3\max_{u,v\in\mathcal{U}}\sum_{j=1}^nE_ju^TX_jv.$$
Note that for any $u,v\in\mathcal{U}$, we have $u^TX_jv\sim\mathcal{N}(0,1)$. By Lemma 13 of \cite{gao2016optimal}, we have
$$\mathbb{P}\left(3\sum_{j=1}^nE_ju^TX_jv/p>t\right)\leq \exp\left(-\min\left(\frac{pt^2}{144n},\frac{pt}{6}\right)\right).$$
Hence,
\begin{eqnarray*}
\mathbb{P}\left(\opnorm{\sum_{j=1}^nE_jX_j}/p>t\right) &\leq& \mathbb{P}\left(3\max_{u,v\in\mathcal{U}}\sum_{j=1}^nE_ju^TX_jv/p>t\right) \\
&\leq& \sum_{u,v\in\mathcal{U}}\mathbb{P}\left(3\sum_{j=1}^nE_ju^TX_jv/p>t\right) \\
&\leq& 81^d\exp\left(-\min\left(\frac{pt^2}{144n},\frac{pt}{6}\right)\right),
\end{eqnarray*}
which is the desired bound.
\end{proof}

\begin{lemma}[Corollary 2.14 of \cite{ipsen2008perturbation}] \label{lem:ipsen}
Consider $X,\wt{X}\in\mathbb{R}^{d\times d}$ with $X$ being full rank. Then,
$$\frac{|\det(X)-\det(\wt{X})|}{|\det(X)|}\leq \left(\opnorm{X^{-1}}\opnorm{X-\wt{X}}+1\right)^d-1.$$
\end{lemma}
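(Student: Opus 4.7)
The plan is to reduce the problem to bounding $|\det(I+E)-1|$ for a matrix $E$ whose operator norm is controlled, and then to carry out an elementary-symmetric-polynomial expansion.

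First I would write $\wt{X}=X+(\wt{X}-X)=X(I+E)$, where $E=X^{-1}(\wt{X}-X)$; this is legitimate because $X$ has full rank. Multiplicativity of the determinant gives
\[
\frac{\det(\wt{X})}{\det(X)} = \det(I+E),
\]
so
\[
\frac{|\det(X)-\det(\wt{X})|}{|\det(X)|} = |\det(I+E)-1|.
\]
Submultiplicativity of the operator norm yields $\opnorm{E}\leq\opnorm{X^{-1}}\opnorm{\wt{X}-X}$, so the right-hand side of the claimed inequality will follow once we show $|\det(I+E)-1|\leq (1+\opnorm{E})^d-1$ for any $E\in\mathbb{R}^{d\times d}$.

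For the latter, I would expand $\det(I+E)$ in terms of the (complex) eigenvalues $\mu_1,\dots,\mu_d$ of $E$, counted with algebraic multiplicity, so that
\[
\det(I+E) = \prod_{i=1}^d(1+\mu_i) = 1 + \sum_{k=1}^d e_k(\mu_1,\dots,\mu_d),
\]
where $e_k$ is the $k$th elementary symmetric polynomial. By the standard spectral-radius bound, $|\mu_i|\leq \opnorm{E}$ for each $i$, and hence $|e_k(\mu_1,\dots,\mu_d)|\leq \binom{d}{k}\opnorm{E}^k$. Summing over $k=1,\dots,d$ and applying the triangle inequality gives
\[
|\det(I+E)-1| \leq \sum_{k=1}^d \binom{d}{k}\opnorm{E}^k = (1+\opnorm{E})^d - 1,
\]
which combined with the first step yields the claimed inequality.

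The main technical subtlety is the passage to complex eigenvalues: $E$ is a general (possibly non-diagonalizable) real matrix, so one should invoke either the Schur decomposition over $\mathbb{C}$ to justify writing $\det(I+E)=\prod_i(1+\mu_i)$, or equivalently read off the coefficients of the characteristic polynomial $\det(\lambda I - E)$ at $\lambda=-1$. Everything else, bounding eigenvalues by the operator norm and bounding elementary symmetric polynomials by their binomial-coefficient coarse bound, is routine.
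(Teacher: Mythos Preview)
Your proof is correct. Note, however, that the paper does not supply its own proof of this lemma: it is stated as Corollary~2.14 of \cite{ipsen2008perturbation} and used as a black box. Your argument---factoring $\wt{X}=X(I+E)$ with $E=X^{-1}(\wt{X}-X)$, then bounding $|\det(I+E)-1|$ via the eigenvalue expansion and the spectral-radius inequality $|\mu_i|\le\opnorm{E}$---is essentially the same route taken in the cited reference, so there is nothing to contrast.
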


\begin{lemma}\label{lem:loss-re}
For any $Z,Z^*\in\o(d)^n$, we have
\begin{equation}
\frac{1}{n^2}\sum_{i=1}^n\sum_{j=1}^n\fnorm{Z_iZ_j^{\T}-Z_i^*Z_j^{*\T}}^2\leq 2\ell(Z,Z^*). \label{eq:loss1-re}
\end{equation}
For any $Z,Z^*\in\so(d)^n$, we have
\begin{equation}
\frac{1}{n^2}\sum_{i=1}^n\sum_{j=1}^n\fnorm{Z_iZ_j^{\T}-Z_i^*Z_j^{*\T}}^2\leq 2\bar{\ell}(Z,Z^*). \label{eq:loss2-re}
\end{equation}
For any $Z,Z^*\in\mathbb{R}^{nd\times d}$ such that $Z/\sqrt{n},Z^*/\sqrt{n}\in\o(nd,d)$, we have
\begin{equation}
\min_{B\in\o(d)}\frac{1}{n}\sum_{i=1}^n\fnorm{Z_i-Z_i^*B}^2 \leq \frac{1}{n^2}\sum_{i=1}^n\sum_{j=1}^n\fnorm{Z_iZ_j^{\T}-Z_i^*Z_j^{*\T}}^2. \label{eq:loss3-re}
\end{equation}
In all inequalities above, we have $Z^{\T}=(Z_1^{\T},\cdots,Z_n^{\T})$ and $Z^{*\T}=(Z_1^{*\T},\cdots,Z_n^{*\T})$, where $Z_i$ and $Z_i^*$ are the $i$th block sub-matrices of size $d\times d$ of $Z$ and $Z^*$, respectively.
\end{lemma}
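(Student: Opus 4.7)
The plan is to expand both sides of each of the three inequalities in terms of the single $d\times d$ matrix
$$V=\sum_{i=1}^n Z_i^{*\T}Z_i,$$
and then reduce each claim to a simple bound relating $\fnorm{V}$ and $\nnorm{V}$. Expanding Frobenius norms via the trace and repeatedly using the cyclic property of the trace gives the two identities
\begin{align*}
\sum_{i,j=1}^n\fnorm{Z_iZ_j^{\T}-Z_i^*Z_j^{*\T}}^2 &= 2dn^2-2\fnorm{V}^2,\\
\sum_{i=1}^n\fnorm{Z_i-Z_i^*B}^2 &= 2dn-2\Tr(BV^{\T}),\qquad B\in\o(d).
\end{align*}
The key step in the derivation is the collapse $\sum_j Z_j^{\T}Z_j=\sum_j Z_j^{*\T}Z_j^*=nI_d$, which holds under the hypothesis of part 1 (each $Z_j,Z_j^*\in\o(d)$) as well as of part 3 (by assumption $Z^{\T}Z=Z^{*\T}Z^*=nI_d$). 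Minimizing the second identity over $B\in\o(d)$ replaces $\Tr(BV^{\T})$ by $\max_{B\in\o(d)}\Tr(BV^{\T})=\nnorm{V}$, yielding $2dn-2\nnorm{V}$.

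Substituting these two identities into (\ref{eq:loss1-re}) and cancelling reduces part 1 to the inequality $\frac{2\nnorm{V}}{n}\le d+\frac{\fnorm{V}^2}{n^2}$. Applying AM--GM with $a=\sqrt d$ and $b=\nnorm{V}/(n\sqrt d)$ gives $\frac{2\nnorm{V}}{n}=2ab\le a^2+b^2=d+\frac{\nnorm{V}^2}{dn^2}$, and then Cauchy--Schwarz in the form $\nnorm{V}^2\le d\fnorm{V}^2$ closes the bound. Part 2 follows immediately: any $Z,Z^*\in\so(d)^n$ lies in $\o(d)^n$, and because the minimization defining $\ell$ runs over the larger set $\o(d)\supset\so(d)$, we have $\ell(Z,Z^*)\le\bar\ell(Z,Z^*)$, so (\ref{eq:loss2-re}) follows from (\ref{eq:loss1-re}).

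For part 3, the same substitution reduces (\ref{eq:loss3-re}) to the comparison $\fnorm{V}^2\le n\nnorm{V}$. I would derive this from the operator-norm bound $\opnorm{V}=\opnorm{Z^{*\T}Z}\le\opnorm{Z^*}\opnorm{Z}=n$, valid because the hypothesis $Z/\sqrt n,Z^*/\sqrt n\in\o(nd,d)$ forces $\opnorm{Z}=\opnorm{Z^*}=\sqrt n$. Since $\sigma_k(V)\le\opnorm{V}\le n$ for every $k\in[d]$, we have $\sigma_k(V)^2\le n\sigma_k(V)$, and summing over $k$ yields $\fnorm{V}^2\le n\nnorm{V}$. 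No step here poses a real obstacle; the whole proof is careful bookkeeping around the two algebraic identities above, together with the two standard norm comparisons used in parts 1 and 3.
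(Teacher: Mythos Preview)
Your proposal is correct and follows essentially the same approach as the paper: both proofs expand each side in terms of the $d\times d$ matrix $M=\frac{1}{n}\sum_i Z_i^{\T}Z_i^{*}$ (your $V^{\T}/n$), identify $\ell$ with $2(d-\nnorm{M})$ and the pairwise loss with $2(d-\fnorm{M}^2)$, and then compare via $\nnorm{M}\le\sqrt{d}\,\fnorm{M}$ for part~1 and $\fnorm{M}^2\le\opnorm{M}\,\nnorm{M}\le\nnorm{M}$ for part~3. The only cosmetic difference is that the paper bounds $\max_{B}\Tr(MB)\le\sqrt{d}\,\fnorm{M}$ directly and factors $d-\fnorm{M}^2=(\sqrt{d}+\fnorm{M})(\sqrt{d}-\fnorm{M})$, whereas you compute the maximum exactly as $\nnorm{M}$ and use AM--GM; the underlying inequalities are identical.
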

\begin{proof}
Consider any $Z,Z^*\in\o(d)^n$.
By direct expansion, we can write
$$
\ell(Z,Z^*)=2\left(d-\max_{B\in\o(d)}\Tr\left(\frac{1}{n}\sum_{i=1}^nZ_i^{\T}Z_i^{*}B\right)\right),
$$
and
\begin{equation}
\frac{1}{n^2}\sum_{i=1}^n\sum_{j=1}^n\fnorm{Z_iZ_j^{\T}-Z_i^*Z_j^{*\T}}^2 = 2\left(d-\fnorm{\frac{1}{n}\sum_{i=1}^nZ_i^{\T}Z_i^{*}}^2\right). \label{eq:dive}
\end{equation}
Since
$$\max_{B\in\o(d)}\Tr\left(\frac{1}{n}\sum_{i=1}^nZ_i^{\T}Z_i^{*}B\right)\leq \max_{B\in\mathbb{R}^{d\times d}:\fnorm{B}^2=d}\Tr\left(\frac{1}{n}\sum_{i=1}^nZ_i^{\T}Z_i^{*}B\right)\leq \sqrt{d}\fnorm{\frac{1}{n}\sum_{i=1}^nZ_i^{\T}Z_i^{*}},$$
we have
\begin{eqnarray*}
\frac{1}{n^2}\sum_{i=1}^n\sum_{j=1}^n\fnorm{Z_iZ_j^{\T}-Z_i^*Z_j^{*\T}}^2 &=& 2\left(\sqrt{d}+\fnorm{\frac{1}{n}\sum_{i=1}^nZ_i^{\T}Z_i^{*}}\right)\left(\sqrt{d}-\fnorm{\frac{1}{n}\sum_{i=1}^nZ_i^{\T}Z_i^{*}}\right) \\
&\leq& 4\sqrt{d}\left(\sqrt{d}-\fnorm{\frac{1}{n}\sum_{i=1}^nZ_i^{\T}Z_i^{*}}\right) \\
&\leq& 2\ell(Z,Z^*).
\end{eqnarray*}
The proof of (\ref{eq:loss1-re}) is complete. The inequality (\ref{eq:loss2-re}) can be proved with the same argument.

Finally, we prove (\ref{eq:loss3-re}). For any $Z,Z^*\in\mathbb{R}^{nd\times d}$ such that $Z/\sqrt{n},Z^*/\sqrt{n}\in\o(nd,d)$, we have $\min_{B\in\o(d)}\frac{1}{n}\sum_{i=1}^n\fnorm{Z_i-Z_i^*B}^2=2\left(d-\max_{B\in\o(d)}\Tr\left(\frac{1}{n}\sum_{i=1}^nZ_i^{\T}Z_i^{*}B\right)\right)$ and the identity (\ref{eq:dive}) continues to hold. Suppose the matrix $\frac{1}{n}\sum_{i=1}^nZ_i^{\T}Z_i^{*}$ admits an SVD $\frac{1}{n}\sum_{i=1}^nZ_i^{\T}Z_i^{*}=UDV^{\T}$. Then,
$$\max_{B\in\o(d)}\Tr\left(\frac{1}{n}\sum_{i=1}^nZ_i^{\T}Z_i^{*}B\right)\geq\Tr(UDV^{\T}VU^{\T})=\Tr(D)\geq \frac{\fnorm{\frac{1}{n}\sum_{i=1}^nZ_i^{\T}Z_i^{*}}^2}{\opnorm{\frac{1}{n}\sum_{i=1}^nZ_i^{\T}Z_i^{*}}}.$$
This implies
\begin{eqnarray*}
\min_{B\in\o(d)}\frac{1}{n}\sum_{i=1}^n\fnorm{Z_i-Z_i^*B}^2 &\leq& 2\left(d-\frac{\fnorm{\frac{1}{n}\sum_{i=1}^nZ_i^{\T}Z_i^{*}}^2}{\opnorm{\frac{1}{n}\sum_{i=1}^nZ_i^{\T}Z_i^{*}}}\right) \\
&\leq& 2\left(d-\frac{\fnorm{\frac{1}{n}\sum_{i=1}^nZ_i^{\T}Z_i^{*}}^2}{\opnorm{Z/\sqrt{n}}\opnorm{Z^*/\sqrt{n}}}\right) \\
&\leq& 2\left(d-\fnorm{\frac{1}{n}\sum_{i=1}^nZ_i^{\T}Z_i^{*}}^2\right) \\
&=& \frac{1}{n^2}\sum_{i=1}^n\sum_{j=1}^n\fnorm{Z_iZ_j^{\T}-Z_i^*Z_j^{*\T}}^2.
\end{eqnarray*}
The proof is complete.
\end{proof}

\subsection{Proof of Lemma \ref{lem:main}}

For any $Z\in\o(d)^n$ such that $\ell(Z,Z^*)\leq \gamma$, we define
\begin{equation}
\wt{Z}_i=\frac{\sum_{j\in[n]\backslash\{i\}}A_{ij}Y_{ij}Z_j}{\sum_{j\in[n]\backslash\{i\}}A_{ij}}, \label{eq:def-z-tilde}
\end{equation}
for each $i\in[n]$. Then, write $\wh{Z}=f(Z)$. It is clear that $\wh{Z}_i=\P(\wt{Z}_i)$ for $i$ such that $\wt{Z}_i$ is full rank. The condition $\ell(Z,Z^*)\leq \gamma$ implies that there exists some $B\in \o(d)$ such that $\ell(Z,Z^*)=\frac{1}{n}\sum_{i=1}^n\fnorm{Z_i-Z_i^*B}^2\leq \gamma$. With $Y_{ij}=Z_i^*Z_j^{*\T}+\sigma W_{ij}$, we obtain the following expansion,
\begin{eqnarray*}
Z_i^{*\T}\wt{Z}_iB^{\T} &=& I_d + \frac{\sum_{j\in[n]\backslash\{i\}}A_{ij}Z_j^{*\T}(Z_j-Z_j^*B)B^{\T}}{\sum_{j\in[n]\backslash\{i\}}A_{ij}} \\
&& + \frac{\sigma\sum_{j\in[n]\backslash\{i\}}A_{ij}Z_i^{*\T}W_{ij}(Z_j-Z_j^*B)B^{\T}}{\sum_{j\in[n]\backslash\{i\}}A_{ij}} + \frac{\sigma\sum_{j\in[n]\backslash\{i\}}A_{ij}Z_i^{*\T}W_{ij}Z_j^*}{\sum_{j\in[n]\backslash\{i\}}A_{ij}} \\
&=& I_d + \frac{1}{n-1}\sum_{j=1}^nZ_j^{*\T}(Z_j-Z_j^*B)B^{\T} - \frac{1}{n-1}Z_i^{*\T}(Z_i-Z_i^*B)B^{\T} \\
&& + \frac{\sum_{j\in[n]\backslash\{i\}}A_{ij}Z_j^{*\T}(Z_j-Z_j^*B)B^{\T}}{\sum_{j\in[n]\backslash\{i\}}A_{ij}} - \frac{1}{n-1}\sum_{j\in[n]\backslash\{i\}}Z_j^{*\T}(Z_j-Z_j^*B)B^{\T} \\
&& + \frac{\sigma\sum_{j\in[n]\backslash\{i\}}A_{ij}Z_i^{*\T}W_{ij}(Z_j-Z_j^*B)B^{\T}}{\sum_{j\in[n]\backslash\{i\}}A_{ij}} + \frac{\sigma\sum_{j\in[n]\backslash\{i\}}A_{ij}Z_i^{*\T}W_{ij}Z_j^*}{\sum_{j\in[n]\backslash\{i\}}A_{ij}}.
\end{eqnarray*}
Next, we define
\begin{equation}
\wt{Q}=I_d + \frac{1}{n-1}\sum_{j=1}^nZ_j^{*\T}(Z_j-Z_j^*B)B^{\T}. \label{eq:def-Q-tilde}
\end{equation}
It is clear that
\begin{equation}
\fnorm{\wt{Q}-I_d} \leq \frac{1}{n-1}\sum_{j=1}^n\fnorm{Z_j-Z_j^*B} \leq \frac{n}{n-1}\sqrt{\frac{1}{n}\sum_{j=1}^n\fnorm{Z_j-Z_j^*B}^2}\leq 2\sqrt{\gamma}, \label{eq:Q-I}
\end{equation}
and therefore $\wt{Q}$ is full rank and we can define $Q=\P(\wt{Q})$. With these definitions and the above expansion of $Z_i^{*\T}\wt{Z}_iB^{\T}$, we can write
\begin{equation}
Z_i^{*\T}\wt{Z}_iB^{\T}Q^{\T}=\wt{Q}Q^{\T}+S_i, \label{eq:pf-expan}
\end{equation}
where $S_i=- \frac{1}{n-1}Z_i^{*\T}(Z_i-Z_i^*B)B^{\T}Q^{\T}+F_i+G_i+H_i$, with
\begin{eqnarray*}
F_i &=& \frac{\sum_{j\in[n]\backslash\{i\}}A_{ij}Z_j^{*\T}(Z_j-Z_j^*B)B^{\T}Q^{\T}}{\sum_{j\in[n]\backslash\{i\}}A_{ij}} - \frac{1}{n-1}\sum_{j\in[n]\backslash\{i\}}Z_j^{*\T}(Z_j-Z_j^*B)B^{\T}Q^{\T}, \\
G_i &=& \frac{\sigma\sum_{j\in[n]\backslash\{i\}}A_{ij}Z_i^{*\T}W_{ij}(Z_j-Z_j^*B)B^{\T}Q^{\T}}{\sum_{j\in[n]\backslash\{i\}}A_{ij}}, \\
H_i &=& \frac{\sigma\sum_{j\in[n]\backslash\{i\}}A_{ij}Z_i^{*\T}W_{ij}Z_j^*Q^{\T}}{\sum_{j\in[n]\backslash\{i\}}A_{ij}}.
\end{eqnarray*}
For the first term of (\ref{eq:pf-expan}), it is clear that $\wt{Q}Q^{\T}$ is a symmetric matrix by the definition of $\mathcal{P}(\cdot)$ given in (\ref{eq:polar-svd}).
We also have
\begin{equation}
\fnorm{\frac{1}{n-1}Z_i^{*\T}(Z_i-Z_i^*B)B^{\T}Q^{\T}} = \frac{1}{n-1}\fnorm{Z_i-Z_i^*B} \leq \frac{\sqrt{n}}{n-1}\sqrt{\frac{1}{n}\sum_{i=1}^n\fnorm{Z_i-Z_i^*B}^2}\leq \sqrt{\gamma}. \label{eq:small-t}
\end{equation}
As long as $\opnorm{F_i}\vee\opnorm{G_i}\vee\opnorm{H_i}\leq\rho$, by (\ref{eq:Q-I}), (\ref{eq:pf-expan}) and (\ref{eq:small-t}), we have
\begin{equation}
s_{\min}(Z_i^{*\T}\wt{Z}_iB^{\T}Q^{\T})\geq s_{\min}(\wt{Q})-\opnorm{S_i}\geq 1-3(\rho+\sqrt{\gamma}). \label{eq:s-min}
\end{equation}
When $3(\rho+\sqrt{\gamma})<1$, $Z_i^{*\T}\wt{Z}_iB^{\T}Q^{\T}$ has full rank, and so does $\wt{Z}_i$, and thus we have $\wh{Z}_i=\P(\wt{Z}_i)$.
Now we can apply a perturbation analysis to (\ref{eq:pf-expan}),
\begin{eqnarray}
\nonumber \fnorm{\wh{Z}_i-Z_i^*QB}^2 &=& \fnorm{\P(\wt{Z}_i)-Z_i^*QB}^2 \\
\label{eq:crit1} &=& \fnorm{\P(Z_i^{*\T}\wt{Z}_iB^{\T}Q^{\T})-I_d}^2 \\
\label{eq:crit2} &=& \fnorm{\P(\wt{Q}Q^{\T}+S_i)-\P\left(\wt{Q}Q^{\T}+\frac{1}{2}S_i+\frac{1}{2}S_i^{\T}\right)}^2 \\
\label{eq:crit3} &\leq& \frac{1}{[1-3(\rho+\sqrt{\gamma})]^2}\fnorm{\frac{1}{2}S_i-\frac{1}{2}S_i^{\T}}^2.
\end{eqnarray}
The equality (\ref{eq:crit1}) is by the properties of $\P(\cdot)$ listed in Section \ref{sec:alg}. We then used the fact that $\wt{Q}Q^{\T}+\frac{1}{2}S_i+\frac{1}{2}S_i^{\T}$ is symmetric and positive definite so that $\P\left(\wt{Q}Q^{\T}+\frac{1}{2}S_i+\frac{1}{2}S_i^{\T}\right)=I_d$, which leads to (\ref{eq:crit2}). The inequality (\ref{eq:crit3}) is by Lemma \ref{lem:li}, with $s_{\min}(\wt{Q}Q^{\T}+S_i)$ lower bounded by (\ref{eq:s-min}) and $s_{\min}\left(\wt{Q}Q^{\T}+\frac{1}{2}S_i+\frac{1}{2}S_i^{\T}\right)$ is lower bounded similarly (which also implies that $\wt{Q}Q^{\T}+\frac{1}{2}S_i+\frac{1}{2}S_i^{\T}$ is positive definite required by (\ref{eq:crit2})). The perturbation analysis has been done under the condition $\opnorm{F_i}\vee\opnorm{G_i}\vee\opnorm{H_i}\leq\rho$. Without this condition, we have
\begin{eqnarray}
\nonumber \fnorm{\wh{Z}_i-Z_i^*QB}^2 &\leq& \frac{1}{[1-3(\rho+\sqrt{\gamma})]^2}\fnorm{\frac{1}{2}S_i-\frac{1}{2}S_i^{\T}}^2\mathbb{I}\{\opnorm{F_i}\vee\opnorm{G_i}\vee\opnorm{H_i}\leq\rho\} \\
\nonumber && + 4d\mathbb{I}\{\opnorm{F_i}\vee\opnorm{G_i}\vee\opnorm{H_i}>\rho\} \\
\nonumber &\leq& \frac{1+\eta}{[1-3(\rho+\sqrt{\gamma})]^2}\fnorm{\frac{1}{2}H_i-\frac{1}{2}H_i^{\T}}^2 + \frac{3(1+\eta^{-1})}{[1-3(\rho+\sqrt{\gamma})]^2}\left(\fnorm{F_i}^2+\fnorm{G_i}^2\right) \\
\nonumber && + \frac{3(1+\eta^{-1})}{[1-3(\rho+\sqrt{\gamma})]^2}\fnorm{\frac{1}{n-1}Z_i^{*\T}(Z_i-Z_i^*B)B^{\T}Q^{\T}}^2 \\
\label{eq:master} && + 4d\mathbb{I}\{\opnorm{F_i}>\rho\} + 4d\mathbb{I}\{\opnorm{G_i}>\rho\} + 4d\mathbb{I}\{\opnorm{H_i}>\rho\},
\end{eqnarray}
where we have used the inequality $\fnorm{X+X'}^2\leq (1+\eta)\fnorm{X}^2+(1+\eta^{-1})\fnorm{X'}^2$ for any matrices $X,X'\in\mathbb{R}^{d\times d}$.
The above bound holds deterministically for any $\eta\in(0,1)$ and any $\rho>0$ that satisfies $3(\rho+\sqrt{\gamma})<1$. The specific values of $\eta$ and $\rho$ will be determined later.

In the next step of the proof, we will need to analyze $F_i$, $G_i$ and $H_i$. This requires a few high probability events.
First, by Lemma \ref{lem:gao}, we have
$$
\sum_{i=1}^n\mathbb{P}\left(\frac{2\sigma}{np}\opnorm{\sum_{j\in[n]\backslash\{i\}}A_{ij}W_{ij}Z_j^*}>\rho\right) \leq 81^dn\exp\left(-\min\left(\frac{\rho^2np}{576\sigma^2},\frac{\rho np}{12\sigma}\right)\right).
$$
By Markov inequality, we have
\begin{equation}
\sum_{i=1}^n\mathbb{I}\left\{\frac{2\sigma}{np}\opnorm{\sum_{j\in[n]\backslash\{i\}}A_{ij}W_{ij}Z_j^*}>\rho\right\} \leq \frac{\sigma^2}{\rho^2p}\exp\left(-\sqrt{\frac{\rho^2 np}{\sigma^2}}\right), \label{eq:hollow0}
\end{equation}
with probability at least
\begin{eqnarray*}
&& 1-81^d\frac{\rho^2np}{\sigma^2}\left[\exp\left(-\frac{\rho^2np}{576\sigma^2}+\sqrt{\frac{\rho^2np}{\sigma^2}}\right)+\exp\left(-\frac{\rho np}{\sigma^2}+\sqrt{\frac{\rho^2 np}{\sigma^2}}\right)\right] \\
&\geq& 1-81^d\frac{2\rho^2 np}{\sigma^2}\exp\left(-3\sqrt{\frac{\rho^2 np}{\sigma^2}}\right) \\
&\geq& 1-81^d\exp\left(-2\sqrt{\frac{\rho^2np}{\sigma^2}}\right) \\
&\geq& 1-\exp\left(-\sqrt{\frac{\rho^2np}{\sigma^2}}\right).
\end{eqnarray*}
The above set of inequalities requires that $\rho$ satisfies $\sqrt{\frac{\rho^2 np}{\sigma^2}}>2304+5d$. Then, by Lemma \ref{lem:ER-graph}, Lemma \ref{lem:bandeira} and Lemma \ref{lem:talagrand}, we know that 
\begin{eqnarray}
\label{eq:hollow1} \min_{i\in[n]}\sum_{j\in[n]\backslash\{i\}}A_{ij} &\geq& (n-1)p - C\sqrt{np\log n}, \\
\label{eq:hollow2}\max_{i\in[n]}\sum_{j\in[n]\backslash\{i\}}A_{ij} &\leq& (n-1)p + C\sqrt{np\log n}, \\
\label{eq:hollow3}\opnorm{A-\mathbb{E}A} &\leq& C\sqrt{np}, \\
\label{eq:hollow4}\opnorm{(A\otimes \mathds{1}_d\mathds{1}_d^T)\circ W} &\leq& C\sqrt{dnp}, \\
\label{eq:hollow6}\sum_{i=1}^n\fnorm{\sum_{j\in[n]\backslash\{i\}}A_{ij}\left(Z_i^{*\T}W_{ij}Z_j^*-Z_j^{*\T}W_{ji}Z_i^*\right)}^2 &\leq& 2d(d-1)n^2p\left(1+C\sqrt{\frac{\log n}{n}}\right), \\
\label{eq:hollow7}\sum_{i=1}^n\fnorm{\sum_{j\in[n]\backslash\{i\}}A_{ij}W_{ij}Z_j^*}^2 &\leq& d^2n^2p\left(1+C\sqrt{\frac{\log n}{n}}\right),
\end{eqnarray}
all hold with probability at least $1-n^{-9}$ for some constant $C>0$. We conclude that the events (\ref{eq:hollow0}), (\ref{eq:hollow1}), (\ref{eq:hollow2}), (\ref{eq:hollow3}), (\ref{eq:hollow4}), (\ref{eq:hollow6}) and (\ref{eq:hollow7}) hold simultaneously with probability at least $1-n^{-9}-\exp\left(-\sqrt{\frac{\rho^2 np}{\sigma^2}}\right)$. These events will be assumed from now on.

\textit{Analysis of $F_i$.} By triangle inequality, (\ref{eq:hollow1}) and (\ref{eq:hollow2}), we have
\begin{eqnarray*}
\fnorm{F_i} &\leq& \frac{\fnorm{\sum_{j\in[n]\backslash\{i\}}(A_{ij}-p)Z_j^{*\T}(Z_j-Z_j^*B)}}{\sum_{j\in[n]\backslash\{i\}}A_{ij}} \\
&& + \left|\frac{p}{\sum_{j\in[n]\backslash\{i\}}A_{ij}}-\frac{1}{n-1}\right|\fnorm{\sum_{j\in[n]\backslash\{i\}}Z_j^{*\T}(Z_j-Z_j^*B)} \\
&\leq& \frac{2}{np}\fnorm{\sum_{j\in[n]\backslash\{i\}}(A_{ij}-p)Z_j^{*\T}(Z_j-Z_j^*B)} \\
&& + \frac{2\left|\sum_{j\in[n]\backslash\{i\}}(A_{ij}-p)\right|}{n^2p}\sum_{j\in[n]\backslash\{i\}}\fnorm{Z_j-Z_j^*B} \\
&\leq& \frac{2}{np}\fnorm{\sum_{j\in[n]\backslash\{i\}}(A_{ij}-p)Z_j^{*\T}(Z_j-Z_j^*B)}  + \frac{C_1\sqrt{p\log n}}{np}\sqrt{\sum_{i=1}^n\fnorm{Z_i-Z_i^*B}^2}.
\end{eqnarray*}
Using (\ref{eq:hollow3}), we have
\begin{eqnarray}
\nonumber \frac{1}{n}\sum_{i=1}^n\fnorm{F_i}^2 &\leq& \frac{8}{n^3p^2}\sum_{i=1}^n\fnorm{\sum_{j\in[n]\backslash\{i\}}(A_{ij}-p)Z_j^{*\T}(Z_j-Z_j^*B)} ^2 + 2C_1^2\frac{\log n}{np}\ell(Z,Z^*) \\
\nonumber &\leq& \frac{8}{n^3p^2}\opnorm{A-\mathbb{E}A}^2\sum_{i=1}^n\fnorm{Z_i-Z_i^*B}^2 + 2C_1^2\frac{\log n}{np}\ell(Z,Z^*) \\
\label{eq:f1} &\leq& C_2\frac{\log n}{np}\ell(Z,Z^*).
\end{eqnarray}
This bound also implies
\begin{equation}
\frac{1}{n}\sum_{i=1}^n\mathbb{I}\{\opnorm{F_i}>\rho\}\leq \frac{1}{n\rho^2}\sum_{i=1}^n\fnorm{F_i}^2\leq C_2\frac{\log n}{\rho^2np}\ell(Z,Z^*). \label{eq:f2}
\end{equation}

\textit{Analysis of $G_i$.} By (\ref{eq:hollow1}) and (\ref{eq:hollow4}), we have
\begin{eqnarray}
\nonumber \frac{1}{n}\sum_{i=1}^n\fnorm{G_i}^2 &\leq& \frac{2\sigma^2}{n^3p^2}\sum_{i=1}^n\fnorm{\sum_{j\in[n]\backslash\{i\}}A_{ij}W_{ij}(Z_j-Z_j^*B)}^2 \\
\nonumber &\leq& \frac{2\sigma^2}{n^3p^2}\opnorm{(A\otimes \mathds{1}_d\mathds{1}_d^T)\circ W}^2\sum_{i=1}^n\fnorm{Z_i-Z_i^*B}^2 \\
\label{eq:g1} &\leq& C_3\frac{\sigma^2d}{np}\ell(Z,Z^*),
\end{eqnarray}
and thus
\begin{equation}
\frac{1}{n}\sum_{i=1}^n\mathbb{I}\{\opnorm{G_i}>\rho\}\leq \frac{1}{n\rho^2}\sum_{i=1}^n\fnorm{G_i}^2\leq C_3\frac{\sigma^2d}{\rho^2np}\ell(Z,Z^*). \label{eq:g2}
\end{equation}

\textit{Analysis of $H_i$.} First, by (\ref{eq:hollow0}) and (\ref{eq:hollow1}), we have
\begin{eqnarray}
\nonumber \frac{1}{n}\sum_{i=1}^n\mathbb{I}\{\opnorm{H_i}>\rho\} &\leq& \frac{1}{n}\sum_{i=1}^n\mathbb{I}\left\{\frac{2\sigma}{np}\opnorm{\sum_{j\in[n]\backslash\{i\}}A_{ij}W_{ij}Z_j^*}>\rho\right\} \\
\nonumber &\leq& \frac{\sigma^2}{\rho^2np}\exp\left(-\sqrt{\frac{\rho^2 np}{\sigma^2}}\right) \\
\label{eq:h1}&\leq& \exp\left(-\frac{1}{2}\sqrt{\frac{\rho^2 np}{\sigma^2}}\right).
\end{eqnarray}
Next, to bound $\fnorm{\frac{1}{2}H_i-\frac{1}{2}H_i^{\T}}^2$, we introduce the notation $E_i=\frac{\sigma\sum_{j\in[n]\backslash\{i\}}A_{ij}Z_i^{*\T}W_{ij}Z_j^*}{\sum_{j\in[n]\backslash\{i\}}A_{ij}}$ and thus we can write $H_i=E_iQ^{\T}$. We then have
\begin{eqnarray*}
\frac{1}{n}\sum_{i=1}^n\fnorm{\frac{1}{2}H_i-\frac{1}{2}H_i^{\T}}^2 &=& \frac{1}{n}\sum_{i=1}^n\fnorm{\frac{1}{2}E_i-\frac{1}{2}E_i^{\T}+\frac{1}{2}E_i(Q^{\T}-I_d)-\frac{1}{2}(Q-I_d)E_i^{\T}}^2 \\
&\leq& (1+\eta)\frac{1}{n}\sum_{i=1}^n\fnorm{\frac{1}{2}E_i-\frac{1}{2}E_i^{\T}}^2 + (1+\eta^{-1})\frac{1}{n}\sum_{i=1}^n\fnorm{E_i(Q^{\T}-I_d)}^2 \\
&\leq& (1+\eta)\frac{1}{n}\sum_{i=1}^n\fnorm{\frac{1}{2}E_i-\frac{1}{2}E_i^{\T}}^2 + (1+\eta^{-1})\fnorm{Q-I_d}^2\frac{1}{n}\sum_{i=1}^n\fnorm{E_i}^2.
\end{eqnarray*}
By (\ref{eq:hollow1}) and (\ref{eq:hollow6}), we have
\begin{eqnarray*}
&& \frac{1}{n}\sum_{i=1}^n\fnorm{\frac{1}{2}E_i-\frac{1}{2}E_i^{\T}}^2 \\
&\leq& \frac{\sigma^2}{4\left(np-2C\sqrt{np\log n}\right)^2}\frac{1}{n}\sum_{i=1}^n\fnorm{\sum_{j\in[n]\backslash\{i\}}A_{ij}\left(Z_i^{*\T}W_{ij}Z_j^*-Z_j^{*\T}W_{ji}Z_i^*\right)}^2 \\
&\leq& \left(1+C_5\sqrt{\frac{\log n}{np}}\right)\frac{\sigma^2d(d-1)}{2np}.
\end{eqnarray*}
By Lemma \ref{lem:li} and (\ref{eq:Q-I}), we have
$$\fnorm{Q-I_d}\leq 2\fnorm{\wt{Q}-I_d}\leq 4\sqrt{\gamma}.$$
By (\ref{eq:hollow1}) and (\ref{eq:hollow7}), we have
\begin{eqnarray*}
\frac{1}{n}\sum_{i=1}^n\fnorm{E_i}^2 &\leq& \frac{\sigma^2}{\left(np-2C\sqrt{np\log n}\right)^2}\frac{1}{n}\sum_{i=1}^n\fnorm{\sum_{j\in[n]\backslash\{i\}}A_{ij}Z_i^{*\T}W_{ij}Z_j^*}^2 \\
&\leq& \frac{2\sigma^2d^2}{np}.
\end{eqnarray*}
We combine the three bounds above and obtain
\begin{equation}
\frac{1}{n}\sum_{i=1}^n\fnorm{\frac{1}{2}H_i-\frac{1}{2}H_i^{\T}}^2 \leq (1+\eta)\left(1+C_5\sqrt{\frac{\log n}{np}}\right)\frac{\sigma^2d(d-1)}{2np} + (1+\eta^{-1})\frac{16\gamma\sigma^2d^2}{np}. \label{eq:h2}
\end{equation}

Finally, we also have the bound
\begin{equation}
\frac{1}{n}\sum_{i=1}^n\fnorm{\frac{1}{n-1}Z_i^{*\T}(Z_i-Z_i^*B)B^{\T}Q^{\T}}^2 \leq \frac{1}{(n-1)^2}\ell(Z,Z^*). \label{eq:super-small}
\end{equation}
Now we can plug the bounds (\ref{eq:f1}), (\ref{eq:f2}), (\ref{eq:g1}), (\ref{eq:g2}), (\ref{eq:h1}), (\ref{eq:h2}) and (\ref{eq:super-small}) into (\ref{eq:master}), and we have
\begin{eqnarray*}
\ell(\wh{Z},Z^*) &\leq& \frac{1}{n}\sum_{i=1}^n\fnorm{\wh{Z}_i-Z_i^*QB}^2 \\
&\leq& \left(1+C_6\left(\rho +\sqrt{\gamma}+\eta+\eta^{-1}\gamma+\sqrt{\frac{\log n}{np}}\right)\right)\frac{\sigma^2d(d-1)}{2np} \\
&& + 4d\exp\left(-\frac{1}{2}\sqrt{\frac{\rho^2np}{\sigma^2}}\right) + C_6\left(\eta^{-1}+d\rho^{-2}\right)\frac{\log n+\sigma^2d}{np}\ell(Z,Z^*).
\end{eqnarray*}
So far we have required $\eta\in(0,1)$, $\rho>0$, $3(\rho+\sqrt{\gamma})<1$ and $\sqrt{\frac{\rho^2 np}{\sigma^2}}>2304+5d$. 
Set $$\eta=\sqrt{\gamma+\frac{\log n+\sigma^2d}{np}}\quad\text{and}\quad \rho^2=\sqrt{\frac{d\log n+\sigma^2d^2}{np}}.$$
Under the conditions that $\gamma<16^{-1}$, $\frac{d\log n + \sigma^2d^2}{np}$ upper bounded by a sufficiently small constant, all the  requirements are satisfied. With this choice, we have
$$\ell(Z,Z^*)\leq\left(1+C_7\left(\gamma^2+\frac{\log n+ \sigma^2d}{np}\right)^{1/4}\right)\frac{\sigma^2d(d-1)}{2np}+C_7\sqrt{\frac{d\log n+\sigma^2d^2}{np}}\ell(Z,Z^*).$$


Note that this inequality has been derived from the events (\ref{eq:hollow0}), (\ref{eq:hollow1}), (\ref{eq:hollow2}), (\ref{eq:hollow3}), (\ref{eq:hollow4}), (\ref{eq:hollow6}) and (\ref{eq:hollow7}) and $\ell(Z,Z^*)\leq \gamma$. Thus, it holds uniformly over all $Z\in\o(d)^n$ such that $\ell(Z,Z^*)\leq \gamma$ with probability at least $1-n^{-9}-\exp\left(-\br{\frac{ np}{\sigma^2}}^{1/4}\right)$. The proof is complete.

\subsection{Proof of Theorem \ref{thm:upper-sod}}\label{sec:pf-sod}

We first characterize the evolution of $\bar{\ell}(Z^{(t)},Z^*)$ through the map
$$Z^{(t)}=\bar{f}(Z^{(t-1)}),$$
where $\bar{f}:\so(d)^n\rightarrow\so(d)^n$ is defined by (\ref{eq:sos-iter}).
\begin{lemma}\label{lem:main-2}
For the $\so(d)$ Synchronization (\ref{eq:sod-setting}),
assume $\frac{np}{\sigma^2}\geq c_1$ and $\frac{np}{\log n}\geq c_2$ for some sufficiently large constants $c_1,c_2>0$ and $d\leq C$ for some constant $C>0$. Then, for any $\gamma\in[0,1/(32d^2))$, we have
\begin{eqnarray*}
&& \mathbb{P}\left(\bar{\ell}(\bar{f}(Z),Z^*)\leq \delta_1\bar{\ell}(Z,Z^*)+(1+\delta_2)\frac{\sigma^2d(d-1)}{2np}\text{ for all }Z\in\so(d)^n\text{ such that }\bar{\ell}(Z,Z^*)\leq\gamma\right) \\
&\geq& 1-n^{-9}-\exp\left(-\br{\frac{ np}{\sigma^2}}^{1/4}\right),
\end{eqnarray*}
where $\delta_1=C_1\sqrt{\frac{\log n+\sigma^2}{np}}$ and $\delta_2=C_2\left(\gamma^2+\frac{\log n+\sigma^2}{np}\right)^{1/4}$ for some constants $C_1,C_2>0$ that only depend on $C$.
\end{lemma}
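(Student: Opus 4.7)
The plan is to reduce Lemma \ref{lem:main-2} to Lemma \ref{lem:main} by showing that on the relevant high-probability event, the map $\bar{\mathcal{P}}$ coincides with $\mathcal{P}$ at every iterate under consideration except on a negligible fraction of indices. Once that reduction is in place, the entire chain of estimates from the proof of Lemma \ref{lem:main} transfers verbatim with $\ell$ replaced by $\bar{\ell}$.

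First, I would fix $B \in \so(d)$ attaining the minimum in $\bar{\ell}(Z, Z^*) \leq \gamma$ and reuse the definitions of $\wt{Z}_i$ in \prettyref{eq:def-z-tilde} and $\wt{Q}$ in \prettyref{eq:def-Q-tilde}. The bound \prettyref{eq:Q-I} gives $\opnorm{\wt{Q} - I_d} \leq 2\sqrt{\gamma}$. Under the hypothesis $\gamma < 1/(32d^2)$ with $d \leq C$, Lemma \ref{lem:ipsen} with $X = I_d$ yields $|\det(\wt{Q}) - 1| \leq (1 + 2\sqrt{\gamma})^d - 1 < 1$, so $\det(\wt{Q}) > 0$ and $Q := \mathcal{P}(\wt{Q}) = \bar{\mathcal{P}}(\wt{Q}) \in \so(d)$. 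In particular, the alignment $QB$ is a legitimate target for $\bar{\ell}$ since it lies in $\so(d)$.

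Second, I would reproduce the decomposition \prettyref{eq:pf-expan} together with the same high-probability events \prettyref{eq:hollow0}--\prettyref{eq:hollow7} controlling $F_i$, $G_i$, $H_i$. The key new step is to verify that on the event $\{\opnorm{F_i} \vee \opnorm{G_i} \vee \opnorm{H_i} \leq \rho\}$ one actually has $\bar{\mathcal{P}}(\wt{Z}_i) = \mathcal{P}(\wt{Z}_i)$. Since $Z_i^*, B, Q \in \so(d)$ all have determinant $1$, we get $\det(\wt{Z}_i) = \det(\wt{Q}Q^{\T} + S_i)$. Combining $\opnorm{Q - I_d} \leq 2\opnorm{\wt{Q} - I_d} \leq 4\sqrt{\gamma}$ (from Lemma \ref{lem:li}) with $\opnorm{S_i} \leq 3(\rho + \sqrt{\gamma})$ gives $\opnorm{\wt{Q}Q^{\T} + S_i - I_d} \lesssim \sqrt{\gamma} + \rho$, so a second application of Lemma \ref{lem:ipsen} (again with $X = I_d$) shows $\det(\wt{Q}Q^{\T} + S_i) > 0$ and hence $\det(\wt{Z}_i) > 0$. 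This yields $\bar{\mathcal{P}}(\wt{Z}_i) = \mathcal{P}(\wt{Z}_i)$, and the perturbation chain \prettyref{eq:crit1}--\prettyref{eq:crit3} from the proof of Lemma \ref{lem:main} applies unchanged at these indices.

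Third, for the "bad" indices where the control event on $F_i, G_i, H_i$ fails, I would use the trivial bound $\fnorm{\wh{Z}_i - Z_i^* Q B}^2 \leq 4d$ (valid since $\wh{Z}_i, Z_i^* Q B \in \so(d)$), which contributes exactly the same indicator terms as in the master inequality \prettyref{eq:master}. Aggregating over $i$, the bad-index fractions are controlled by the tail estimates already derived in \prettyref{eq:f2}, \prettyref{eq:g2}, and \prettyref{eq:h1}, and the remaining Frobenius norm bounds \prettyref{eq:f1}, \prettyref{eq:g1}, \prettyref{eq:h2}, \prettyref{eq:super-small} all still apply. Making the same choice $\eta = \sqrt{\gamma + (\log n + \sigma^2 d)/np}$ and $\rho^2 = \sqrt{(d\log n + \sigma^2 d^2)/np}$ then yields the identical final inequality, uniformly in $Z \in \so(d)^n$ with $\bar{\ell}(Z, Z^*) \leq \gamma$. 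The main technical obstacle is precisely the determinant-positivity verification via Lemma \ref{lem:ipsen}: we need $(1 + O(\sqrt{\gamma}))^d$ to stay safely below $2$ even when $d \geq 2$, which is exactly why the admissible range for $\gamma$ is tightened from $[0, 1/16)$ to $[0, 1/(32d^2))$.
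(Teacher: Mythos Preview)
Your proposal is correct and follows essentially the same approach as the paper: both verify via Lemma~\ref{lem:ipsen} that $\det(\wt{Q})>0$ (so $Q\in\so(d)$) and then that $\det(\wt{Z}_i)>0$ on the good event $\{\opnorm{F_i}\vee\opnorm{G_i}\vee\opnorm{H_i}\leq\rho\}$ (so $\bar{\mathcal{P}}(\wt{Z}_i)=\mathcal{P}(\wt{Z}_i)$), after which the entire proof of Lemma~\ref{lem:main} carries over unchanged. The only cosmetic difference is that the paper bounds $\opnorm{Z_i^{*\T}\wt{Z}_iB^{\T}-I_d}$ directly via the triangle inequality, whereas you reach the same conclusion through $\det(\wt{Z}_i)=\det(\wt{Q}Q^{\T}+S_i)$ and an auxiliary bound on $\opnorm{Q-I_d}$.
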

\begin{proof}
The proof follows the same argument as that of Lemma \ref{lem:main}, and we only point out the difference. For any $Z\in\so(d)^n$ such that $\bar{\ell}(Z,Z^*)\leq\gamma$, define $\wt{Z}$ according to (\ref{eq:def-z-tilde}). The condition $\bar{\ell}(Z,Z^*)\leq\gamma$ implies that there exists some $B\in\so(d)$ such that $\bar{\ell}(Z,Z^*)=\frac{1}{n}\sum_{i=1}^n\fnorm{Z_i-Z_i^*B}^2\leq \gamma$. Recall the definition of $\wt{Q}$ in (\ref{eq:def-Q-tilde}) and the bound $\fnorm{\wh{Q}-I_d}\leq 2\sqrt{\gamma}$ that it satisfies according to (\ref{eq:Q-I}). By Lemma \ref{lem:ipsen}, we have
$$|\det(\wt{Q})-1|\leq \left(\fnorm{\wt{Q}-I_d}+1\right)^d-1\leq (2\sqrt{\gamma}+1)^d-1<1.$$
The last inequality is by the condition $2\sqrt{\gamma}\leq e^{d^{-1}\log 2}-1$. Then, we can conclude that $\det(\wt{Q})>0$. By the definition of $\bar{\P}(\cdot)$, we then have $Q=\P(\wt{Q})=\bar{\P}(\wt{Q})\in\so(d)$. Recall the definitions of $S_1$, $F_i$, $G_i$ and $H_i$ in the proof of Lemma \ref{lem:main}. By (\ref{eq:pf-expan}), we have
$$\opnorm{Z_i^{*\T}\wt{Z}_iB^{\T}-I_d}\leq \opnorm{Z_i^{*\T}\wt{Z}_iB^{\T}Q^{\T}-\wt{Q}Q^{\T}}+\opnorm{\wt{Q}-I_d}\leq 3(\rho+\sqrt{\gamma}),$$
as long as $\opnorm{F_i}\vee\opnorm{G_i}\vee\opnorm{H_i}\leq\rho$. By Lemma \ref{lem:ipsen}, we have
$$
|\det(Z_i^{*\T}\wt{Z}_iB^{\T})-1| \leq \left(\opnorm{Z_i^{*\T}\wt{Z}_iB^{\T}-I_d}+1\right)^d-1\leq (3(\rho+\sqrt{\gamma})+1)^d-1<1.
$$
The last inequality requires $3(\rho+\sqrt{\gamma})<e^{d^{-1}\log 2}-1$. We then have $\det(Z_i^{*\T}\wt{Z}_iB^{\T})>0$, which implies $\det(\wt{Z}_i)>0$. Thus, $\bar{\P}(\wt{Z}_i)=\P(\wt{Z}_i)$. Together with the fact $Q\in\so(d)$ that we have just established, the remaining arguments in the proof of Lemma \ref{lem:main} can be directly applied to obtain the desired conclusion. The only subtle difference is that here we require $3(\rho+\sqrt{\gamma})<e^{d^{-1}\log 2}-1$ compared with $3(\rho+\sqrt{\gamma})<1$ in the proof of Lemma \ref{lem:main}. As a result, a sufficient condition for $\gamma$ is $\gamma < 1/(32d^2)$ compared with $\gamma<1/16$ in Lemma \ref{lem:main}. This detail does not affect the result given the assumption that $d\leq C$.
\end{proof}

Then, we establish the error bound for the initialization procedure.
\begin{lemma}\label{lem:sod-ini}
For the $\so(d)$ Synchronization (\ref{eq:sod-setting}),
assume $\frac{np}{\log n}\geq c$ for some sufficiently large constants $c>0$ and $d\leq C_1$ for some constant $C_1>0$. Then, for $Z^{(0)}$ defined in (\ref{eq:sod-ini}), we have
$$\bar{\ell}(Z^{(0)},Z^*)\leq C\frac{\sigma^2+1}{np},$$
with probability at least $1-n^{-9}$ for some constant $C>0$ only depending on $C_1$.
\end{lemma}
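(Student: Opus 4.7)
The plan is to reduce the $\so(d)$ bound to the already-established $\o(d)$ bound of Lemma~\ref{lem:od-ini} by carefully tracking the block-wise difference between $\bar{\P}$ and $\P$, which coincide on matrices with positive determinant. First, applying Lemma~\ref{lem:od-ini} to the same spectral estimator $\wh{U}$ produces some $B^\star\in\o(d)$ with $\frac{1}{n}\sum_{i=1}^n \fnorm{\P(\wh{U}_i)-Z_i^*B^\star}^2 \le C(\sigma^2+1)/(np)=:\epsilon$ on an event of probability at least $1-n^{-9}$. Because $\wh{U}$ from (\ref{eq:PCA}) is determined only up to right-multiplication by an element of $\o(d)$, I would, if necessary, flip the sign of one column of $\wh{U}$ so that $\det(B^\star)=+1$ and hence $B^\star\in\so(d)$; this operation leaves $\ell(\P(\wh{U}),Z^*)$ invariant because $\P$ is $\o(d)$-equivariant on the right and $\ell$ is minimized over $\o(d)$.

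Next, I would split the indices into $S^+=\{i:\det(\wh{U}_i)>0\}$ and $S^-=[n]\setminus S^+$. A direct computation (using that $\max_{M\in\o(d)\setminus\so(d)}\Tr(M)=d-2$) shows that every element of $\o(d)\setminus\so(d)$ is at Frobenius distance at least $2$ from every element of $\so(d)$. Since $Z_i^*B^\star\in\so(d)$ while $\P(\wh{U}_i)\in\o(d)\setminus\so(d)$ whenever $i\in S^-$, one obtains $\fnorm{\P(\wh{U}_i)-Z_i^*B^\star}^2\ge 4$ on $S^-$, so Markov's inequality gives $|S^-|\le n\epsilon/4$.

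On $S^+$, the SVD of $\wh{U}_i$ has $\det(U_iV_i^\T)=+1$, so by construction $\bar{\P}(\wh{U}_i)=\P(\wh{U}_i)$ and therefore $\fnorm{\bar{\P}(\wh{U}_i)-Z_i^*B^\star}^2=\fnorm{\P(\wh{U}_i)-Z_i^*B^\star}^2$. On $S^-$, I would use the trivial bound $\fnorm{\bar{\P}(\wh{U}_i)-Z_i^*B^\star}^2\le 4d$ (both matrices lie in $\o(d)$, which has diameter $2\sqrt{d}$ in Frobenius norm). Combining the two contributions gives
\[
\bar\ell(Z^{(0)},Z^*) \;\le\; \frac{1}{n}\sum_{i\in S^+}\fnorm{\P(\wh{U}_i)-Z_i^*B^\star}^2 + \frac{4d\,|S^-|}{n} \;\le\; \epsilon + d\epsilon,
\]
which is $O((\sigma^2+1)/(np))$ under the assumption $d\le C_1$.

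The main obstacle is the representative-selection step: one must justify the reduction to $\det(B^\star)=+1$, either by an explicit sign-flip post-processing of $\wh{U}$ or by arguing that the symmetry of the problem allows such a choice without loss of generality; once this is secured, everything else is a routine Markov-plus-trivial-bound aggregation built directly on Lemma~\ref{lem:od-ini}.
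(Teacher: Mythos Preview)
Your approach is correct and somewhat different from the paper's. The paper does not invoke Lemma~\ref{lem:od-ini} as a black box; instead it returns to the raw spectral blocks $\wh{Z}_i$ (before any polar projection), bounds $\min_{B\in\o(d)}\frac{1}{n}\sum_i\fnorm{\wh{Z}_i-Z_i^*B}^2$, and then controls the bad indices via the determinant perturbation inequality of Lemma~\ref{lem:ipsen} (those $i$ with $\det(\wh{Z}_i)\le 0$ must satisfy $\fnorm{\wh{Z}_i-Z_i^*B}\ge e^{d^{-1}\log 2}-1$). Your Frobenius-separation observation $\mathrm{dist}_{\rm F}(\so(d),\o(d)\setminus\so(d))\ge 2$ plays the same role more cleanly and lets you skip the re-derivation by standing directly on Lemma~\ref{lem:od-ini}. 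The trade-off is that the paper's route also yields the growing-$d$ version mentioned in Remark~\ref{rmk:1}, whereas your reduction inherits whatever constants Lemma~\ref{lem:od-ini} already absorbed.

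On the obstacle you flag: the sign-flip reduction is exactly right, and the missing ingredient is precisely the paper's Lemma~\ref{lem:P-bar-prop}, which states $\bar{\P}(X)=\bar{\P}(XR)$ for $R=\diag(1,\dots,1,-1)$. This is what guarantees that replacing $\wh{U}$ by $\wh{U}R$ does not alter $Z^{(0)}$ (so you may indeed assume $B^\star\in\so(d)$ without loss of generality), and the paper uses the same lemma for the same purpose in its own argument. Two small clean-ups: (i) put the measure-zero case $\det(\wh{U}_i)=0$ into $S^+$ rather than $S^-$, since there both initializations equal $I_d$ and your separation bound does not apply; (ii) when you write ``$\fnorm{\P(\wh{U}_i)-Z_i^*B^\star}^2$'' you are implicitly using the $\o(d)$ initialization (which sets $I_d$ on the degenerate set), so state this explicitly.
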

The proof of Lemma \ref{lem:sod-ini} will be given in Section \ref{sec:pf-ini}.

\begin{proof}[Proof of Theorem \ref{thm:upper-sod}]
By Lemma \ref{lem:main-2} and the same argument that leads to (\ref{eq:linear}), we have
$$
\bar{\ell}(Z^{(t)},Z^*) \leq \delta_1^t\bar{\ell}(Z^{(0)},Z^*)+\frac{1+\delta_2}{1-\delta_1}\frac{\sigma^2d(d-1)}{2np},
$$
for all $t\geq 1$, as long as $\bar{\ell}(Z^{(0)},Z^*)$ is sufficiently small. The initial error $\bar{\ell}(Z^{(0)},Z^*)$ is controlled by Lemma \ref{lem:sod-ini}, and thus the desired result follows.
\end{proof}

\subsection{Proofs of Lemma \ref{lem:od-ini} and Lemma \ref{lem:sod-ini}}\label{sec:pf-ini}

We first state a property of the operator $\bar{\P}(\cdot)$.
\begin{lemma}\label{lem:P-bar-prop}
Consider a full-rank $X=(X_1,\cdots,X_d)\in\mathbb{R}^{d\times d}$, where $X_a\in\mathbb{R}^d$ is the $a$th column of $X$. Define $\wt{X}=(X_1,\cdots,X_{d-1},-X_d)$ by changing the sign of the last column of $X$. Then, we have $\bar{\P}(X)=\bar{\P}(\wt{X}).$
\end{lemma}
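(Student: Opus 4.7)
The plan is to compute an SVD of $\wt{X}$ directly from an SVD of $X$ and then apply the definition of $\bar{\P}(\cdot)$ termwise. Let $J = \diag(1,\ldots,1,-1) \in \o(d)$. The key observation is that column-negation is right-multiplication by $J$, i.e., $\wt{X} = XJ$.

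First I would fix an SVD $X = UDV^{\T}$ with $U,V\in\o(d)$ and $D$ diagonal with non-negative entries (which exists because $X$ has full rank). Writing $\wt{V} = JV$, one checks directly that $\wt{V}\in\o(d)$ (as a product of orthogonal matrices) and that
\[
\wt{X} = UDV^{\T}J = UD(JV)^{\T} = U D \wt{V}^{\T},
\]
so this is a valid SVD of $\wt{X}$.

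Next I would apply the definition of $\bar{\P}$ in (\ref{eq:polar-svd})-style form to this SVD of $\wt{X}$. Using $\det(U\wt{V}^{\T}) = \det(UV^{\T}J) = -\det(UV^{\T})$, the computation is
\[
\bar{\P}(\wt{X}) = U\begin{pmatrix} I_{d-1} & 0 \\ 0 & -\det(UV^{\T}) \end{pmatrix}\wt{V}^{\T}
= U\begin{pmatrix} I_{d-1} & 0 \\ 0 & -\det(UV^{\T}) \end{pmatrix} J V^{\T}.
\]
The block-diagonal factor absorbs $J$ on the right, turning $-\det(UV^{\T})$ back into $\det(UV^{\T})$, yielding
\[
\bar{\P}(\wt{X}) = U\begin{pmatrix} I_{d-1} & 0 \\ 0 & \det(UV^{\T}) \end{pmatrix} V^{\T} = \bar{\P}(X),
\]
which is the claim.

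There is no genuine obstacle here; the only subtlety is making sure that the factorization $\wt{X} = UD(JV)^{\T}$ is a legitimate SVD, which reduces to checking that $JV\in\o(d)$ and that $D$ still has non-negative diagonal entries (it is unchanged). The entire proof is a two-line consequence of the sign-flip identity $\det(J)=-1$.
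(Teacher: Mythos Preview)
Your computation slides over a transpose: from $\wt V = JV$ you get $\wt V^{\T} = V^{\T}J$, not $JV^{\T}$. In your second display you silently replace $\wt V^{\T}$ by $JV^{\T}$, which is what lets the diagonal factor ``absorb'' $J$. With the correct $\wt V^{\T}=V^{\T}J$ the absorption fails: $\diag(I_{d-1},-\det(UV^{\T}))$ acts on the rows of $V^{\T}$ while $J$ acts on its columns, and these two operations do not cancel unless $V^{\T}$ commutes with $J$.

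This is not a repairable slip. The paper's own proof makes essentially the same move (its middle equality $U\diag(I_{d-1},c)V^{\T}=U\diag(I_{d-1},-c)\wt V^{\T}$ with $\wt V^{\T}=V^{\T}J$ fails for the same reason), and in fact the lemma is false as stated. For $d=2$ and $X=\bigl(\begin{smallmatrix}1&2\\3&4\end{smallmatrix}\bigr)$, maximizing $\langle R_\theta,X\rangle=5\cos\theta+\sin\theta$ over $\so(2)$ gives $\bar\P(X)=\tfrac{1}{\sqrt{26}}\bigl(\begin{smallmatrix}5&-1\\1&5\end{smallmatrix}\bigr)$, whereas $\langle R_\theta,\wt X\rangle=-3\cos\theta+5\sin\theta$ gives $\bar\P(\wt X)=\tfrac{1}{\sqrt{34}}\bigl(\begin{smallmatrix}-3&-5\\5&-3\end{smallmatrix}\bigr)$, which is a different rotation. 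So no rearrangement of your SVD argument will establish the identity in general.
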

\begin{proof}
Suppose $X$ admits an SVD $X=UDV^{\T}$. Define $\wt{V}^{\T}$ by changing the sign of the last column of $V^{\T}$. Then, the SVD of $\wt{X}$ is $\wt{X}=UD\wt{V}^{\T}$, and we have
$$\bar{\mathcal{P}}(X)=U\begin{pmatrix}
1 &  & & \\
   & \ddots & & \\
 & & 1 & \\
 & & & \det(UV^{\T})
\end{pmatrix}V^{\T}=U\begin{pmatrix}
1 &  & & \\
   & \ddots & & \\
 & & 1 & \\
 & & & -\det(UV^{\T})
\end{pmatrix}\wt{V}^{\T}=\bar{\P}(\wt{X}),$$
where the last equality is by $-\det(UV^{\T})=\det(U\wt{V}^{\T})$.
\end{proof}

\begin{proof}[Proofs of Lemma \ref{lem:od-ini} and Lemma \ref{lem:sod-ini}]
For any $Z\in\mathbb{R}^{nd\times d}$ such that $Z/\sqrt{n}\in\o(nd,d)$, we have
$$\fnorm{p^{-1}(A\otimes \mathds{1}_d\mathds{1}_d^T)\circ Y-ZZ^{\T}}^2=\fnorm{p^{-1}(A\otimes \mathds{1}_d\mathds{1}_d^T)\circ Y}^2+n^2d-2\Tr\left((p^{-1}(A\otimes \mathds{1}_d\mathds{1}_d^T)\circ Y)ZZ^{\T}\right).$$
Therefore, minimizing $\fnorm{p^{-1}(A\otimes \mathds{1}_d\mathds{1}_d^T)\circ Y-ZZ^{\T}}^2$ is equivalent to maximizing $\Tr\left((p^{-1}(A\otimes \mathds{1}_d\mathds{1}_d^T)\circ Y)ZZ^{\T}\right)$. For $\o(d)$ synchronization, we can thus write $Z^{(0)}$ as
$$Z_i^{(0)}=\begin{cases}
\P(\wh{Z}_i), & \det(\wh{Z}_i)\neq 0, \\
I_d, &\det(Z_i)=0,
\end{cases}$$
where
$$
\wh{Z} = \argmin_{Z\in\mathbb{R}^{nd\times d}: Z/\sqrt{n}\in\o(nd,d)}\fnorm{p^{-1}(A\otimes \mathds{1}_d\mathds{1}_d^T)\circ Y-ZZ^{\T}}^2.
$$
For $\so(d)$ synchronization, $Z^{(0)}$ is
$$Z_i^{(0)}=\begin{cases}
\bar{\P}(\wh{Z}_i), & \det(\wh{Z}_i)\neq 0, \\
I_d, &\det(Z_i)=0,
\end{cases}$$
where $\wh{Z}$ has the same definition.

Our first step is to derive a bound for $\wh{Z}$. For both $\o(d)$ synchronization and $\so(d)$ synchronization, it is clear that $Z^*/\sqrt{n}\in\o(nd,d)$. By the definition of $\wh{Z}$, we have
$$\fnorm{p^{-1}(A\otimes \mathds{1}_d\mathds{1}_d^T)\circ Y-\wh{Z}\wh{Z}^{\T}}^2\leq \fnorm{p^{-1}(A\otimes \mathds{1}_d\mathds{1}_d^T)\circ Y-Z^*Z^{*\T}}^2.$$
After rearrangement, we obtain
$$\fnorm{\wh{Z}\wh{Z}^{\T}-Z^*Z^{*\T}}^2\leq 2\left|\Tr\left((\wh{Z}\wh{Z}^{\T}-Z^*Z^{*\T})(p^{-1}(A\otimes \mathds{1}_d\mathds{1}_d^T)\circ Y-Z^*Z^{*\T})\right)\right|.$$
This implies
$$\fnorm{\wh{Z}\wh{Z}^{\T}-Z^*Z^{*\T}}\leq 2\left|\Tr\left(\frac{\wh{Z}\wh{Z}^{\T}-Z^*Z^{*\T}}{\fnorm{\wh{Z}\wh{Z}^{\T}-Z^*Z^{*\T}}}(p^{-1}(A\otimes \mathds{1}_d\mathds{1}_d^T)\circ Y-Z^*Z^{*\T})\right)\right|.$$
Note that $\frac{\wh{Z}\wh{Z}^{\T}-Z^*Z^{*\T}}{\fnorm{\wh{Z}\wh{Z}^{\T}-Z^*Z^{*\T}}}$ is a matrix of rank at most $2d$, and thus it admits an eigendecomposition $\frac{\wh{Z}\wh{Z}^{\T}-Z^*Z^{*\T}}{\fnorm{\wh{Z}\wh{Z}^{\T}-Z^*Z^{*\T}}}=\sum_{j=1}^{2d}\lambda_juu_j^{\T}$ with the eigenvalues satisfying $\sum_{j=1}^{2d}\lambda_j^2=1$. Then, we have
\begin{eqnarray*}
&& \left|\Tr\left(\frac{\wh{Z}\wh{Z}^{\T}-Z^*Z^{*\T}}{\fnorm{\wh{Z}\wh{Z}^{\T}-Z^*Z^{*\T}}}(p^{-1}(A\otimes \mathds{1}_d\mathds{1}_d^T)\circ Y-Z^*Z^{*\T})\right)\right| \\
&\leq& \sum_{j=1}^{2d}|\lambda_j|\left|u_j^{\T}(p^{-1}(A\otimes \mathds{1}_d\mathds{1}_d^T)\circ Y-Z^*Z^{*\T})u_j\right| \\
&\leq& \opnorm{p^{-1}(A\otimes \mathds{1}_d\mathds{1}_d^T)\circ Y-Z^*Z^{*\T}}\sum_{j=1}^{2d}|\lambda_j| \\
&\leq& \sqrt{2d}\opnorm{p^{-1}(A\otimes \mathds{1}_d\mathds{1}_d^T)\circ Y-Z^*Z^{*\T}}.
\end{eqnarray*}
Hence,
\begin{eqnarray*}
&& \fnorm{\wh{Z}\wh{Z}^{\T}-Z^*Z^{*\T}} \\
&\leq& 2\sqrt{2d}\opnorm{p^{-1}(A\otimes \mathds{1}_d\mathds{1}_d^T)\circ Y-Z^*Z^{*\T}} \\
&\leq& 2\sqrt{2d}\frac{1}{p}\opnorm{((A-\mathbb{E}A)\otimes \mathds{1}_d\mathds{1}_d^T)\circ Z^*Z^{*\T}} + 2\sqrt{2d}\frac{\sigma}{p}\opnorm{(A\otimes \mathds{1}_d\mathds{1}_d^T)\circ W}.
\end{eqnarray*}
For the first term, we have
\begin{eqnarray*}
&& \opnorm{((A-\mathbb{E}A)\otimes \mathds{1}_d\mathds{1}_d^T)\circ Z^*Z^{*\T}} \\
&=& \max_{u^{\T}=(u_1^{\T},\cdots,u_n^{\T}):\sum_{i=1}^n\|u_i\|^2=1}\left|\sum_{i=1}^n\sum_{j=1}^n(A_{ij}-\mathbb{E}A_{ij})u_i^{\T}Z_iZ_j^{\T}u_j\right| \\
&\leq& \max_{u^{\T}=(u_1^{\T},\cdots,u_n^{\T}):\sum_{i=1}^n\|u_i\|^2=1}\left|\sum_{i=1}^n\sum_{j=1}^n(A_{ij}-\mathbb{E}A_{ij})u_i^{\T}u_j\right| \\
&\leq& \opnorm{(A-\mathbb{E}A)\otimes I_d} \\
&\leq& \opnorm{A-\mathbb{E}A} \\
&\leq& C_1\sqrt{np},
\end{eqnarray*}
with probability at least $1-n^{-10}$ by Lemma \ref{lem:ER-graph}. The second term can be bounded by Lemma \ref{lem:bandeira}. That is,
$$\opnorm{(A\otimes \mathds{1}_d\mathds{1}_d^T)\circ W}\leq C_2\sqrt{npd},$$
with probability at least $1-n^{-10}$. Combining the above bounds, we have
$$\frac{1}{n^2}\fnorm{\wh{Z}\wh{Z}^{\T}-Z^*Z^{*\T}}^2\leq C_3\frac{d\left(1+\sigma^2d\right)}{np},$$
with probability at least $1-2n^{-10}$. Apply Lemma \ref{lem:loss-re}, and we have
\begin{equation}
\min_{B\in\o(d)}\frac{1}{n}\sum_{i=1}^n\fnorm{\wh{Z}_i-Z_i^*B}^2 \leq C_3\frac{d\left(1+\sigma^2d\right)}{np}. \label{eq:bd-Z-hat-ini}
\end{equation}

Next, let us consider the setting of $\o(d)$ synchronization. For any $B\in\o(d)$, we have
\begin{eqnarray}
\nonumber && \frac{1}{n}\sum_{i=1}^n\fnorm{Z_i^{(0)}-Z_i^*B}^2 \\
\nonumber &=& \frac{1}{n}\sum_{i=1}^n\fnorm{Z_i^{(0)}-Z_i^*B}^2\mathbb{I}\{\det(\wh{Z}_i)\neq 0\} + \frac{1}{n}\sum_{i=1}^n\fnorm{Z_i^{(0)}-Z_i^*B}^2\mathbb{I}\{\det(\wh{Z}_i)= 0\} \\
\nonumber &\leq& \frac{1}{n}\sum_{i=1}^n\fnorm{\P(\wh{Z}_i)-Z_i^*B}^2\mathbb{I}\{\det(\wh{Z}_i)\neq 0\}  \\
\label{eq:free1} && + \frac{1}{n}\sum_{i=1}^n\fnorm{Z_i^{(0)}-Z_i^*B}^2\mathbb{I}\{\fnorm{\wh{Z}_i-Z_i^*B}\geq e^{d^{-1}\log 2}-1\} \\
\label{eq:free2} &\leq& \frac{4}{n}\sum_{i=1}^n\fnorm{\wh{Z}_i-Z_i^*B}^2 + \frac{4d}{n}\sum_{i=1}^n\frac{\fnorm{\wh{Z}_i-Z_i^*B}^2}{\left(e^{d^{-1}\log 2}-1\right)^2} \\
\nonumber &\leq& C_4d^3\frac{1}{n}\sum_{i=1}^n\fnorm{\wh{Z}_i-Z_i^*B}^2,
\end{eqnarray}
where (\ref{eq:free1}) is because $\fnorm{\wh{Z}_i-Z_i^*B}< e^{d^{-1}\log 2}-1$ implies $\det(\wh{Z}_i)\neq 0$ by Lemma \ref{lem:ipsen}, and (\ref{eq:free2}) is by Lemma \ref{lem:li} and Markov inequality. Taking minimum on both sides and applying (\ref{eq:bd-Z-hat-ini}), we have
$$\min_{B\in\o(d)}\frac{1}{n}\sum_{i=1}^n\fnorm{Z_i^{(0)}-Z_i^*B}^2\leq C_5\frac{d^4\left(1+\sigma^2d\right)}{np},$$
with probability at least $1-2n^{-10}$.

Finally, we consider the setting of $\so(d)$ synchronization. We know that $Z_i^*\in\so(d)$ for all $i\in[n]$. For any $B\in\so(d)$, it is clear that we also have $Z_i^*B\in\so(d)$, which implies $\det(Z_i^*B)>0$. Then,
\begin{eqnarray}
\nonumber && \frac{1}{n}\sum_{i=1}^n\fnorm{Z_i^{(0)}-Z_i^*B}^2 \\
\nonumber &=& \frac{1}{n}\sum_{i=1}^n\fnorm{Z_i^{(0)}-Z_i^*B}^2\mathbb{I}\{\det(\wh{Z}_i)> 0\} + \frac{1}{n}\sum_{i=1}^n\fnorm{Z_i^{(0)}-Z_i^*B}^2\mathbb{I}\{\det(\wh{Z}_i)\leq 0\} \\
\label{eq:free3} &\leq& \frac{1}{n}\sum_{i=1}^n\fnorm{\P(\wh{Z}_i)-Z_i^*B}^2\mathbb{I}\{\det(\wh{Z}_i)> 0\}  \\
\label{eq:free4} && + \frac{1}{n}\sum_{i=1}^n\fnorm{Z_i^{(0)}-Z_i^*B}^2\mathbb{I}\{\fnorm{\wh{Z}_i-Z_i^*B}\geq e^{d^{-1}\log 2}-1\} \\
\nonumber &\leq& \frac{4}{n}\sum_{i=1}^n\fnorm{\wh{Z}_i-Z_i^*B}^2 + \frac{4d}{n}\sum_{i=1}^n\frac{\fnorm{\wh{Z}_i-Z_i^*B}^2}{\left(e^{d^{-1}\log 2}-1\right)^2} \\
\nonumber &\leq& C_4d^3\frac{1}{n}\sum_{i=1}^n\fnorm{\wh{Z}_i-Z_i^*B}^2.
\end{eqnarray}
In (\ref{eq:free3}) we have used the fact that $Z_i^{(0)}=\bar{\P}(\wh{Z}_i)=\P(\wh{Z}_i)$ when $\det(\wh{Z}_i)> 0$, and (\ref{eq:free4}) is because $\fnorm{\wh{Z}_i-Z_i^*B}< e^{d^{-1}\log 2}-1$ implies $\det(\wh{Z}_i)> 0$ by Lemma \ref{lem:ipsen} and we know the fact that $\det(Z_i^*B)>0$. Taking minimum on both sides of the inequality, we have
\begin{equation}
\min_{B\in\so(d)}\frac{1}{n}\sum_{i=1}^n\fnorm{Z_i^{(0)}-Z_i^*B}^2 \leq C_4d^3\min_{B\in\so(d)}\frac{1}{n}\sum_{i=1}^n\fnorm{\wh{Z}_i-Z_i^*B}^2. \label{eq:haha1}
\end{equation}
For any $B\in\o(d)\backslash\so(d)$, we can define $\wt{B}$ by changing the sign of the last column of $B$, and then we have $\wt{B}\in\so(d)$. We also define $\wt{Z}_i$ by changing the sign of the last column of $\wh{Z}_i$.
Note that $\det(Z_i^*\wt{B})>0$, and thus
\begin{eqnarray}
\nonumber && \frac{1}{n}\sum_{i=1}^n\fnorm{Z_i^{(0)}-Z_i^*\wt{B}}^2 \\
\nonumber &=&\frac{1}{n}\sum_{i=1}^n\fnorm{Z_i^{(0)}-Z_i^*\wt{B}}^2\mathbb{I}\{\det(\wt{Z}_i)> 0\} + \frac{1}{n}\sum_{i=1}^n\fnorm{Z_i^{(0)}-Z_i^*\wt{B}}^2\mathbb{I}\{\det(\wt{Z}_i)\leq 0\} \\
\label{eq:free5} &\leq& \frac{1}{n}\sum_{i=1}^n\fnorm{\P(\wt{Z}_i)-Z_i^*\wt{B}}^2\mathbb{I}\{\det(\wt{Z}_i)> 0\}  \\
\label{eq:free6} && + \frac{1}{n}\sum_{i=1}^n\fnorm{Z_i^{(0)}-Z_i^*\wt{B}}^2\mathbb{I}\{\fnorm{\wt{Z}_i-Z_i^*\wt{B}}\geq e^{d^{-1}\log 2}-1\} \\
\nonumber &\leq& \frac{4}{n}\sum_{i=1}^n\fnorm{\wt{Z}_i-Z_i^*\wt{B}}^2 + \frac{4d}{n}\sum_{i=1}^n\frac{\fnorm{\wt{Z}_i-Z_i^*\wt{B}}^2}{\left(e^{d^{-1}\log 2}-1\right)^2} \\
\nonumber &\leq& C_4d^3\frac{1}{n}\sum_{i=1}^n\fnorm{\wt{Z}_i-Z_i^*\wt{B}}^2 \\
\label{eq:free7} &=& C_4d^3\frac{1}{n}\sum_{i=1}^n\fnorm{\wh{Z}_i-Z_i^*{B}}^2.
\end{eqnarray}
To see (\ref{eq:free5}), we first apply Lemma \ref{lem:P-bar-prop} to obtain $Z_i^{(0)}=\bar{\P}(\wh{Z}_i)=\bar{\P}(\wt{Z}_i)$, and then we have $\bar{\P}(\wt{Z}_i)={\P}(\wt{Z}_i)$ when $\det(\wt{Z}_i)>0$.
The bound (\ref{eq:free6}) is obtained because $\fnorm{\wt{Z}_i-Z_i^*\wt{B}}< e^{d^{-1}\log 2}-1$ implies $\det(\wt{Z}_i)> 0$ by Lemma \ref{lem:ipsen} and we also know the fact that $\det(Z_i^*\wt{B})>0$. The last equality (\ref{eq:free7}) is a direct consequence of the definitions of $\wt{Z}_i$ and $\wt{B}$. Taking minimum on both sides of the inequality, we have
\begin{equation}
\min_{B\in\so(d)}\frac{1}{n}\sum_{i=1}^n\fnorm{Z_i^{(0)}-Z_i^*B}^2 \leq C_4d^3\min_{B\in\o(d)\backslash\so(d)}\frac{1}{n}\sum_{i=1}^n\fnorm{\wh{Z}_i-Z_i^*B}^2. \label{eq:haha2}
\end{equation}
Combining (\ref{eq:haha1}), (\ref{eq:haha2}) and (\ref{eq:bd-Z-hat-ini}), we have
\begin{eqnarray*}
&& \min_{B\in\so(d)}\frac{1}{n}\sum_{i=1}^n\fnorm{Z_i^{(0)}-Z_i^*B}^2 \\
&\leq& \min\left(C_4d^3\min_{B\in\so(d)}\frac{1}{n}\sum_{i=1}^n\fnorm{\wh{Z}_i-Z_i^*B}^2,C_4d^3\min_{B\in\o(d)\backslash\so(d)}\frac{1}{n}\sum_{i=1}^n\fnorm{\wh{Z}_i-Z_i^*B}^2\right) \\
&=& C_4d^3\min_{B\in\o(d)}\frac{1}{n}\sum_{i=1}^n\fnorm{\wh{Z}_i-Z_i^*B}^2 \\
&\leq& C_4\frac{d^4\left(1+\sigma^2d\right)}{np},
\end{eqnarray*}
with probability at least $1-2n^{-10}$. The proof is complete.
\end{proof}

\subsection{Proofs of Lemma \ref{lem:Q-exist}, Lemma \ref{lem:trees} and Theorem \ref{thm:lower-sod}}\label{sec:pf-lower}

\begin{proof}[Proof of Lemma \ref{lem:Q-exist}]
We use a mathematical induction argument. First, since
$$1-\left(r_{12}^2+\cdots+r_{1d}^2\right)\geq 1-\frac{1}{64d^4},$$
it is clear that $s_{11}$ is well defined and satisfies $s_{11}\geq\frac{7}{8}$. Next, we study $s_{21}$ and $s_{22}$. The equation (\ref{eq:eq1}) can be written as
\begin{equation}
s_{21}=-\frac{r_{13}r_{23}+\cdots+r_{1d}r_{2d}}{s_{11}}-\frac{r_{12}}{s_{11}}s_{22}. \label{eq:s21}
\end{equation}
We plug (\ref{eq:s21}) into (\ref{eq:eq2}) and obtain a quadratic equation of $s_{22}$. A sufficient condition for a quadratic equation $ax^2+bx+c=0$ to have two real solutions is $ac<0$. For the quadratic equation of $s_{22}$, this condition is
\begin{equation}
1-(r_{23}^2+\cdots+r_{2d}^2)-\frac{(r_{13}r_{23}+\cdots+r_{1d}r_{2d})^2}{s_{11}^2}>0. \label{eq:ac1}
\end{equation}
Since $s_{11}\geq\frac{7}{8}$ and $\max_{1\leq a<b\leq d}|r_{ab}|\leq \frac{1}{8d^{5/2}}$, (\ref{eq:ac1}) clearly holds, and thus $s_{22}$ is well defined. By (\ref{eq:s21}), $s_{21}$ is also well defined. We know from (\ref{eq:eq2}) that $|s_{22}|\leq 1$, and therefore, by (\ref{eq:s21}), we have the bound
$$|s_{21}|\leq \frac{|r_{13}r_{23}+\cdots+r_{1d}r_{2d}|+|r_{12}|}{7/8}\leq \frac{1}{56d^4}+\frac{1}{7d^{5/2}}\leq \frac{1}{4d^2},$$
and thus
$$s_{22}^2=1-s_{21}^2-(r_{23}^2+\cdots+r_{2d}^2)\geq 1-\frac{1}{16d^3},$$
which implies $s_{22}\geq\frac{7}{8}$.

Suppose $\max_{1\leq b<a\leq k-1}|s_{ab}|\leq \frac{1}{4d^2}$ and $\min_{a\in[k-1]}s_{aa}\geq\frac{7}{8}$, now we study $s_{k1},s_{k2},\cdots,s_{kk}$. Define $\wt{Q}_{k-1}\in\mathbb{R}^{(k-1)\times (k-1)}$ to be a diagonal matrix with the same diagonal elements as $Q_{k-1}$. Then, we have
$$\opnorm{Q_{k-1}-\wt{Q}_{k-1}}\leq \|Q_{k-1}-\wt{Q}_{k-1}\|_{\ell_{\infty}}\leq \frac{1}{4d},$$
which implies
\begin{equation}
s_{\min}(Q_{k-1})\geq s_{\min}(\wt{Q}_{k-1})-\opnorm{Q_{k-1}-\wt{Q}_{k-1}}\geq \frac{7}{8}-\frac{1}{4d}\geq\frac{3}{4}. \label{eq:Q-stable}
\end{equation}
Thus, $Q_{k-1}$ is invertible and we can write (\ref{eq:eq1}) as
\begin{equation}
S_{k-1}=-s_{kk}Q_{k-1}^{-1}R_{k-1}-Q_{k-1}^{-1}v_{k-1}. \label{eq:sk-1}
\end{equation}
We plug (\ref{eq:sk-1}) into (\ref{eq:eq2}) and obtain a quadratic equation of $s_{kk}$. Similar to (\ref{eq:ac1}), a sufficient condition for the quadratic equation to have two real solutions is
\begin{equation}
1-\|Q_{k-1}^{-1}v_{k-1}\|^2-\left(r_{k,k+1}^2+\cdots+r_{kd}^2\right)>0. \label{eq:ac2}
\end{equation}
By (\ref{eq:Q-stable}) and $\max_{1\leq a<b\leq d}|r_{ab}|\leq \frac{1}{8d^{5/2}}$, we have $\|Q_{k-1}^{-1}v_{k-1}\|^2\leq (16/9)\|v_{k-1}\|^2\leq \frac{16/9}{64^2d^7}$ and $r_{k,k+1}^2+\cdots+r_{kd}^2\leq \frac{1}{64d^4}$, and therefore (\ref{eq:ac2}) holds. Thus, $s_{kk}$ is well defined. By (\ref{eq:sk-1}), $s_{k1},\cdots,s_{k,k-1}$ are also well defined. We know from (\ref{eq:eq2}) that $|s_{kk}|\leq 1$, and therefore, by (\ref{eq:sk-1}), we have the bound
$$\|S_{k-1}\|\leq \|Q_{k-1}^{-1}R_{k-1}\|+\|Q_{k-1}^{-1}v_{k-1}\|\leq \frac{4}{3}\left(\|R_{k-1}\|+\|v_{k-1}\|\right)\leq \frac{1}{6d^{2}}+\frac{1}{48d^{7/2}}\leq \frac{1}{4d^2}.$$
We also have
$$s_{kk}^2=1-\|S_{k-1}\|^2-\left(r_{k,k+1}^2+\cdots+r_{kd}^2\right)\geq 1-\frac{1}{16d^4}-\frac{1}{64d^4},$$
which implies $s_{kk}\geq \frac{7}{8}$. To summarize, we have shown that $s_{k1},s_{k2},\cdots,s_{kk}$ are well defined. Moreover, we have $\max_{1\leq b<a\leq k}|s_{ab}|\leq \frac{1}{4d^2}$ and $\min_{a\in[k]}s_{aa}\geq\frac{7}{8}$. Hence, we conclude that $Q(r)$ is well defined, $Q(r)\in\o(d)$ and $\max_{1\leq b<a\leq d}|s_{ab}|\leq \frac{1}{4d^2}$ and $\min_{a\in[d]}s_{aa}\geq\frac{7}{8}$.

To prove $Q(r)\in\so(d)$, it suffices to show $\det(Q(r))>0$. We define a diagonal matrix $\wt{Q}(r)\in\mathbb{R}^{d\times d}$ that has the same diagonal elements as $Q(r)$. We know that $\det(\wt{Q}(r))\geq \left(\frac{7}{8}\right)^d>0$. Since $\max_{1\leq b<a\leq d}|s_{ab}|\leq \frac{1}{4d^2}$ and $\max_{1\leq a<b\leq d}|r_{ab}|\leq \frac{1}{8d^{5/2}}$, we have $\opnorm{Q(r)-\wt{Q}(r)}\leq (4d)^{-1}$. By Lemma \ref{lem:ipsen} and , we have
$$\frac{|\det(Q(r))-\det(\wt{Q}(r))|}{|\det(\wt{Q}(r))|}\leq \left(\frac{8}{7}\opnorm{Q(r)-\wt{Q}(r)}+1\right)^d-1\leq \left(\frac{8}{7}\frac{1}{4d}+1\right)^d-1<1,$$
and therefore, we have $\det(Q(r))>0$, which implies $Q(r)\in\so(d)$.

Finally, we analyze the derivative of $Q(r)$ with respect to each $r_{ab}$. This is also done via a mathematical induction argument. First, by the formula of $s_{11}$, we have $\left|\frac{\partial s_{11}}{\partial r_{ab}}\right|\leq \frac{1}{7d^{5/2}}$. Suppose $\sqrt{\left\|\frac{\partial S_{l-1}}{\partial r_{ab}}\right\|^2+\left|\frac{\partial s_{ll}}{\partial r_{ab}}\right|^2}\leq 5$ for all $l\in[k-1]$, now we study $\frac{\partial S_{k-1}}{\partial r_{ab}}$ and $\frac{\partial s_{kk}}{\partial r_{ab}}$. We take derivatives of both sides of (\ref{eq:eq1}) and (\ref{eq:eq2}) with respect to $r_{ab}$, and obtain
\begin{equation}
Q_k\begin{pmatrix}
\frac{\partial S_{k-1}}{\partial r_{ab}} \\
\frac{\partial s_{kk}}{\partial r_{ab}}
\end{pmatrix}=\begin{pmatrix}
-\frac{\partial v_{k-1}}{\partial r_{ab}} -\frac{\partial Q_{k-1}}{\partial r_{ab}}S_{k-1} -s_{kk}\frac{\partial R_{k-1}}{\partial r_{ab}} \\
-\frac{1}{2}\frac{\partial(r_{k,k+1}^2+\cdots+r_{kd}^2)}{\partial r_{ab}}
\end{pmatrix}. \label{eq:deriv-basic}
\end{equation}
By the definitions of $v_{k-1}$ and $R_{k-1}$, we have $\left\|\frac{\partial v_{k-1}}{\partial r_{ab}}\right\|\leq \frac{1}{8d^{5/2}}$ and $\left\|s_{kk}\frac{\partial R_{k-1}}{\partial r_{ab}}\right\|\leq 1$. By the condition $\sqrt{\left\|\frac{\partial S_{l-1}}{\partial r_{ab}}\right\|^2+\left|\frac{\partial s_{ll}}{\partial r_{ab}}\right|^2}\leq 5$ for all $l\in[k-1]$, we have
$$\opnorm{\frac{\partial Q_{k-1}}{\partial r_{ab}}}\leq \left\|\frac{\partial Q_{k-1}}{\partial r_{ab}}\right\|_{\ell_{\infty}}\leq \sqrt{d}\max_{1\leq l\leq k-1}\sqrt{\left\|\frac{\partial S_{l-1}}{\partial r_{ab}}\right\|^2+\left|\frac{\partial s_{ll}}{\partial r_{ab}}\right|^2}+1\leq 5\sqrt{d}+1,$$
which implies
$$\left\|\frac{\partial Q_{k-1}}{\partial r_{ab}}S_{k-1}\right\|\leq \opnorm{\frac{\partial Q_{k-1}}{\partial r_{ab}}}\|S_{k-1}\| \leq (5\sqrt{d}+1)\sqrt{\frac{1}{4d}}.$$
We also have $\left|\frac{1}{2}\frac{\partial(r_{k,k+1}^2+\cdots+r_{kd}^2)}{\partial r_{ab}}\right|\leq \frac{1}{8d^{5/2}}$. By (\ref{eq:Q-stable}) and (\ref{eq:deriv-basic}), we have
$$\left\|\begin{pmatrix}
\frac{\partial S_{k-1}}{\partial r_{ab}} \\
\frac{\partial s_{kk}}{\partial r_{ab}}
\end{pmatrix}\right\|^2\leq \frac{16}{9}\left\|\begin{pmatrix}
-\frac{\partial v_{k-1}}{\partial r_{ab}} -\frac{\partial Q_{k-1}}{\partial r_{ab}}S_{k-1} -s_{kk}\frac{\partial R_{k-1}}{\partial r_{ab}} \\
-\frac{1}{2}\frac{\partial(r_{k,k+1}^2+\cdots+r_{kd}^2)}{\partial r_{ab}}
\end{pmatrix}\right\|^2\leq 25,$$
and therefore $\sqrt{\left\|\frac{\partial S_{l-1}}{\partial r_{ab}}\right\|^2+\left|\frac{\partial s_{ll}}{\partial r_{ab}}\right|^2}\leq 5$ also holds for $l=k$. The proof is complete.
\end{proof}

\begin{proof}[Proof of Lemma \ref{lem:trees}]
Without loss of generality, we consider the problem with $i=1$ and $j=2$. Let us first understand the likelihood function for the problem. Given the knowledge of $Z_3,\cdots,Z_n$, we can decompose the likelihood function as
\begin{eqnarray*}
p((A\otimes \mathds{1}_d\mathds{1}_d^T)\circ Y,A) &=& p(A)p(A_{12}Y_{12}|A)\left(\prod_{i=3}^np(A_{1i}Y_{1i}|A)p(A_{2i}Y_{2i}|A)\right) \\
&& \times \prod_{3\leq i<j\leq n}p(A_{ij}Y_{ij}|A).
\end{eqnarray*}
Note that the part of the above decomposition that depends on $Z_1$ or $Z_2$ is
\begin{equation}
p(A_{12}Y_{12}|A)\left(\prod_{i=3}^np(A_{1i}Y_{1i}|A)p(A_{2i}Y_{2i}|A)\right), \label{eq:lik}
\end{equation}
which is proportional to the product of the density functions of $\mn\left(Z_1Z_2^{\T},\sigma^2A_{12}I_d,I_d\right)$, $\mn\left(\left(\sum_{i=3}^nA_{1i}\right)Z_1,\sigma^2\left(\sum_{i=3}^nA_{1i}\right)I_d,I_d\right)$ and $\mn\left(\left(\sum_{i=3}^nA_{2i}\right)Z_2,\sigma^2\left(\sum_{i=3}^nA_{2i}\right)I_d,I_d\right)$.

With the parametrization $Z_1=Z_1(r)=Q(r)$ and $Z_2=Z_2(r)=Q(r')$, we can write $T=T(r,r')=Z_1(r)Z(r')^{\T}$, and the logarithm of (\ref{eq:lik}) as $\ell(r,r')$. For the simplicity of notation, we re-index $\{r_{ab}\}_{1\leq a<b\leq d}$ and $\{r'_{ab}\}_{1\leq a<b\leq d}$ by $\{r_l\}_{1\leq l\leq d(d-1)/2}$ and $\{r'_l\}_{1\leq l\leq d(d-1)/2}$, respectively. The order of the re-indexing process is not important. Then, the information matrix is given by
$$B=\mathbb{E}\begin{pmatrix}
\nabla_r\ell(r,r')\nabla_{r}\ell(r,r')^{\T} & \nabla_r\ell(r,r')\nabla_{r'}\ell(r,r')^{\T} \\
\nabla_{r'}\ell(r,r')\nabla_{r}\ell(r,r')^{\T} & \nabla_{r'}\ell(r,r')\nabla_{r'}\ell(r,r')^{\T}
\end{pmatrix},$$
where the expectation is induced by the distribution (\ref{eq:lik}). Define the Jacobians $G_1=\frac{\partial \vec(Z_1(r))}{\partial r}\in\mathbb{R}^{\frac{d(d-1)}{2}\times d^2}$ and $G_2=\frac{\partial \vec(Z_2(r'))}{\partial r'}\in\mathbb{R}^{\frac{d(d-1)}{2}\times d^2}$. By direct calculation, we can write $B=B_1+B_2\in\mathbb{R}^{d(d-1)\times d(d-1)}$, where
$$B_1=\frac{p}{\sigma^2}\begin{pmatrix}
G_1G_1^{\T} & G_1G_2^{\T} \\
G_2G_1^{\T} & G_2G_2^{\T}
\end{pmatrix},$$
and
$$B_2=\frac{(n-2)p}{\sigma^2}\begin{pmatrix}
G_1G_1^{\T} & 0 \\
0 & G_2G_2^{\T}
\end{pmatrix}.$$
Define
$$F=\begin{pmatrix}
(Z_2\otimes I_d)G_1^{\T} & (I_d\otimes Z_1)G_2^{\T}
\end{pmatrix}\in\mathbb{R}^{d^2\times d(d-1)}.$$
By Equation (11) of the paper \cite{gill1995applications}, we have
\begin{eqnarray}
\nonumber && \inf_{\wh{T}}\int\int \mathbb{E}_Z\fnorm{\wh{T}-Z_1(r)Z_2(r')^{\T}}^2\diff P(r)\diff P(r') \\
\label{eq:trees-l} &\geq& \int\int \Tr(J(r,r'))\diff P(r)\diff P(r')-I(P),
\end{eqnarray}
where $J(r,r')=FB^{-1}F^{\T}$ and $I(P)$ is the information of the distribution $P$ that will be elaborated later.

To analyze (\ref{eq:trees-l}), we first need to show $B$ is invertible so that $J(r,r')$ is well defined. For any unit vector $v\in\mathbb{R}^{\frac{d(d-1)}{2}}$, we have
$$
v^{\T}G_1G_1^{\T}v = \left\|\sum_{l=1}^{\frac{d(d-1)}{2}}v_l\frac{\partial \vec(Z_1(r))}{\partial r_l}\right\|^2 \geq \left\|\sum_{l=1}^{\frac{d(d-1)}{2}}v_l\frac{\partial r}{\partial r_l}\right\|^2 = \sum_{l=1}^{\frac{d(d-1)}{2}}v_l^2=1.
$$
The inequality above is because $r$ can be viewed as a sub-vector of $\vec(Z_1(r))$. Recall the definition of $\{s_{ab}\}_{1\leq b\leq a\leq d}$ in the parametrization of $Q(r)$, and we also have
\begin{eqnarray*}
v^{\T}G_1G_1^{\T}v &=& \left\|\sum_{l=1}^{\frac{d(d-1)}{2}}v_l\frac{\partial \vec(Z_1(r))}{\partial r_l}\right\|^2 \\
&=& \left\|\sum_{l=1}^{\frac{d(d-1)}{2}}v_l\frac{\partial r}{\partial r_l}\right\|^2 + \sum_{1\leq b\leq a\leq d}\left(\sum_{l=1}^{\frac{d(d-1)}{2}}v_l\frac{\partial s_{ab}}{\partial r_l}\right)^2 \\
&\leq& 1 + \sum_{1\leq b\leq a\leq d}\sum_{l=1}^{\frac{d(d-1)}{2}}\left|\frac{\partial s_{ab}}{\partial r_l}\right|^2 \\
&\leq& 1+\frac{25d^3}{2},
\end{eqnarray*}
where the last inequality is by Lemma \ref{lem:Q-exist}. Therefore, we have
$$1\leq s_{\min}(G_1G_1^{\T})\leq s_{\max}(G_1G_1^{\T})\leq 1+\frac{25d^3}{2},$$
and the same bounds also apply to $G_2G_2^{\T}$. We also have
$$\opnorm{G_1G_2^{\T}}\leq \left(\max_{\|v\|=1}\|G_1v\|\right)\left(\max_{\|v\|=1}\|G_2v\|\right)\leq 1+\frac{25d^3}{2}.$$
With the above bounds, we immediately have
\begin{eqnarray}
\label{eq:info1} \opnorm{B_2} &\leq& \frac{(n-2)p}{\sigma^2}\left(1+\frac{25d^3}{2}\right), \\
\label{eq:info2} s_{\min}(B_2) &\geq& \frac{(n-2)p}{\sigma^2}, \\
\label{eq:info3} \opnorm{B_1} &\leq& \frac{2p}{\sigma^2}\left(1+\frac{25d^3}{2}\right).
\end{eqnarray}
Therefore, under the condition that $d$ is bounded by a constant, we know that both $B_1+B_2$ and $B_2$ are invertible when $n$ is sufficiently large.

Now we study the first term of (\ref{eq:trees-l}).
We can lower bound $\Tr(J(r,r'))$ by
\begin{eqnarray*}
\Tr(J(r,r')) &\geq& \Tr(FB_2^{-1}F^{\T})-|\Tr(F((B_1+B_2)^{-1}-B_2^{-1})F^{\T})| \\
&\geq& \Tr(FB_2^{-1}F^{\T})\left(1-\fnorm{B_2^{1/2}(B_1+B_2)^{-1}B_2^{1/2}-I_2}\right).
\end{eqnarray*}
By the definitions of $B_2$ and $F$, we have
\begin{eqnarray*}
\Tr(FB_2^{-1}F^{\T}) &=& \frac{\sigma^2}{(n-2)p}\Tr\left((Z_2\otimes I_d)G_1^{\T}(G_1G_1^{\T})^{-1}G_1(Z_2^{\T}\otimes I_d)\right) \\
&& + \frac{\sigma^2}{(n-2)p}\Tr\left((Z_1\otimes I_d)G_2^{\T}(G_2G_2^{\T})^{-1}G_2(Z_1^{\T}\otimes I_d)\right) \\
&=& \frac{\sigma^2d(d-1)}{(n-2)p}.
\end{eqnarray*}
By (\ref{eq:info1}), (\ref{eq:info2}) and (\ref{eq:info3}), we have
$$\fnorm{B_2^{1/2}(B_1+B_2)^{-1}B_2^{1/2}-I_2}\leq \opnorm{B_2}\opnorm{B_2^{-1}}\opnorm{(B_1+B_2)^{-1}}\fnorm{B_1}\leq C_1\frac{d^7}{n}.$$
Hence, the first term of (\ref{eq:trees-l}) has the following lower bound,
\begin{equation}
\int\int \Tr(J(r,r'))\diff P(r)\diff P(r') \geq \left(1-\frac{C_2}{n}\right)\frac{\sigma^2d(d-1)}{(n-2)p}, \label{eq:trees-main}
\end{equation}
for some constant $C_2$ only depending on the bound of $d$.

Finally, we need to give an upper bound for $I(P)$. Let $\lambda(\cdot)$ be the density function of $P$, and then $I(P)$ is defined by
$$I(P)=\int \sum_{ikl}\frac{1}{\lambda(r)\lambda(r')}\left(\frac{\partial}{\partial r_k}K_{ik}(r,r')\lambda(r)\lambda(r')\right)\left(\frac{\partial}{\partial r_l}K_{il}(r,r')\lambda(r)\lambda(r')\right)\diff r\diff r',$$
where $K(r,r')=FB^{-1}$. Given the definition of $\lambda(\cdot)$, we have the bound
$$I(P)\leq C_3\left(\max_{r,r'}\max_{i,k}\left|\frac{\partial}{\partial r_k}K_{ik}(r,r')\right|+\max_{r,r'}\max_{i,k}|K_{ik}(r,r')|\right)^2,$$
where $C_3$ is some constant only depending on the bound of $d$ and the maximum is taken over all $r$ and $r'$ that satisfy $\max_{1\leq a<b\leq d}|r_{ab}|\leq \frac{1}{8d^{5/2}}$ and $\max_{1\leq a<b\leq d}|r'_{ab}|\leq \frac{1}{8d^{5/2}}$. Though this bound can be computed explicitly using formulas of matrix derivatives, we omit the long and tedious details here. Intuitively, each entry of $K(r,r')$ is a smooth function of $r$ and $r'$, and the orders of $\frac{\partial}{\partial r_k}K_{ik}(r,r')$ and $K_{ik}(r,r')$ only depend on that of $B_2$, since the contribution of $B_1$ is negligible. We thus have $I(P)\leq C_4\left(\frac{\sigma^2}{np}\right)^2$ for some constant $C_4$ only depending on the bound of $d$. By (\ref{eq:trees-l}) and (\ref{eq:trees-main}), we have
$$\inf_{\wh{T}}\int\int \mathbb{E}_Z\fnorm{\wh{T}-Z_1(r)Z_2(r')^{\T}}^2\diff P(r)\diff P(r')\geq \left(1-C_5\left(\frac{1}{n}+\frac{\sigma^2}{np}\right)\right)\frac{\sigma^2d(d-1)}{np},$$
and the proof is complete.
\end{proof}

\begin{proof}[Proof of Theorem \ref{thm:lower-sod}]
By Lemma \ref{lem:loss-re} and the same argument that leads to (\ref{eq:lower-ineq2}), we have
\begin{eqnarray*}
&& \inf_{\wh{Z}\in\so(d)^n}\sup_{Z\in\so(d)^n}\mathbb{E}_Z\bar{\ell}(\wh{Z},Z) \\
&\geq& \frac{1}{2n^2}\sum_{1\leq i\neq j\leq n}\int\left(\inf_{\wh{T}}\int\int \mathbb{E}_Z\fnorm{\wh{T}-Z_iZ_j^{\T}}^2\diff \Pi(Z_i)\diff \Pi(Z_j)\right)\prod_{k\in[n]\backslash\{i,j\}}\diff \Pi(Z_k).
\end{eqnarray*}
Since $\supp(\Pi)\subset\so(d)$ by Lemma \ref{lem:Q-exist}, the conclusion of Lemma \ref{lem:trees} also applies here, and thus we have
$$\inf_{\wh{T}}\int\int \mathbb{E}_Z\fnorm{\wh{T}-Z_i(r)Z_j(r')^{\T}}^2\diff P(r)\diff P(r')\geq \left(1-C\left(\frac{1}{n}+\frac{\sigma^2}{np}\right)\right)\frac{\sigma^2d(d-1)}{np}.$$
This leads to the desired result.
\end{proof}

\bibliographystyle{dcu}
\bibliography{reference}

\end{document}